\definecolor{darkblue}{rgb}{0.0,0,0.7} 
\newcommand{\darkblue}{\color{darkblue}} 
\definecolor{darkred}{rgb}{0.7,0,0} 
\definecolor{lightgrey}{rgb}{0.7,0.7,0.7} 
\definecolor{meet}{RGB}{255,205,111}
\definecolor{join}{RGB}{0,77,178}
\newtheorem{theorem}{Theorem}[section]
\newtheorem{proposition}[theorem]{Proposition}
\newtheorem{corollary}[theorem]{Corollary}
\newtheorem{lemma}[theorem]{Lemma}
\theoremstyle{definition}
\newtheorem{definition}[theorem]{Definition}
\newtheorem{example}[theorem]{Example}
\newtheorem{conjecture}[theorem]{Conjecture}
\newtheorem{remark}[theorem]{Remark}
\newcommand{\defn}[1]{\emph{\darkblue #1}}
\newcommand{\pop}{\mathsf{Pop}}
\newcommand{\spn}{\mathrm{span}}
\newcommand{\wo}{w_\circ}
\newcommand{\Des}{\mathrm{Des}}
\newcommand{\NC}{\mathrm{NC}}
\newcommand{\mov}{\mathrm{Mov}}
\newcommand{\im}{\mathrm{im}}
\newcommand{\aexc}{\mathrm{aexc}}
\newcommand{\Aexc}{\mathrm{Aexc}}
\newcommand{\cyc}{\mathrm{cyc}}
\newcommand{\unfold}{\mathrm{unfold}}
\newcommand{\fold}{\mathrm{fold}}
\newcommand{\U}{{W'}}
\newcommand{\uu}{{w'}}
\DeclarePairedDelimiter\abs{\lvert}{\rvert}
\title{Coxeter Pop-Tsack Torsing}
\author[C.~Defant]{Colin Defant}
\address[C.~Defant]{Princeton University}
\email{cdefant@princeton.edu}
\author[N.~Williams]{Nathan Williams}
\address[N.~Williams]{University of Texas at Dallas}
\email{nathan.williams1@utdallas.edu}
\keywords{}
\subjclass[2010]{05E16; 05A05; 37E15}
\begin{document}

\begin{abstract}
Given a finite irreducible Coxeter group $W$ with a fixed Coxeter element $c$, we define the \emph{Coxeter pop-tsack torsing operator} $\mathsf{Pop}_T:W\to W$ by $\mathsf{Pop}_T(w)=w\cdot\pi_T(w)^{-1}$, where $\pi_T(w)$ is the join in the noncrossing partition lattice $\NC(w,c)$ of the set of reflections lying weakly below $w$ in the absolute order. This definition serves as a ``Bessis dual'' version of the first author's notion of a Coxeter pop-stack sorting operator, which, in turn, generalizes the pop-stack-sorting map on symmetric groups. We show that if $W$ is coincidental or of type~$D$, then the identity element of $W$ is the unique periodic point of $\mathsf{Pop}_T$ and the maximum size of a forward orbit of $\mathsf{Pop}_T$ is the Coxeter number $h$ of $W$. In each of these types, we obtain a natural lift from $W$ to the dual braid monoid of $W$. We also prove that $W$ is coincidental if and only if it has a unique forward orbit of size $h$. For arbitrary $W$, we show that the forward orbit of $c^{-1}$ under $\mathsf{Pop}_T$ has size $h$ and is isolated in the sense that none of the non-identity elements of the orbit have preimages lying outside of the orbit.  
\end{abstract}

\maketitle

\section{Introduction}

\subsection{Pop-stack sorting}
Suppose $X$ is a set and $f:X\to X$ is a function. Define the \defn{forward orbit} of an element $x\in X$ under the map $f$ to be the set \[O_f(x)=\{x,f(x),f^2(x),\ldots\},\] where $f^i$ denotes the $i^\text{th}$ iterate of $f$. One of the primary aims of \emph{dynamical combinatorics} is to understand the sizes of forward orbits of combinatorially-defined maps.

A rich source of combinatorial dynamical systems comes from the symmetric group $\mathfrak S_n$, which is the group of permutations of the set $[n]:=\{1,\ldots,n\}$. For example, the \defn{pop-stack sorting map} is the function $\pop:\mathfrak S_n\to \mathfrak S_n$ that acts by reversing all of the descending runs of a permutation while keeping different descending runs in the same order relative to each other. This map, which originally appeared in an article by Ungar concerning directions determined by points in the plane \cite{Ungar}, has gained interest in the past few years among enumerative combinatorialists \cite{AlbertVatter, Asinowski, Asinowski2, Elder, ClaessonPop, ClaessonPop2, Pudwell}. See \cite{defant2021stack} and the references therein for several other dynamical systems arising from sorting operators on permutations. 

It is not difficult to show that the identity permutation $e=12\cdots n$ is the unique fixed point of $\pop:\mathfrak S_n\to \mathfrak S_n$ and that every forward orbit of this map contains $e$. However, it is surprisingly difficult to determine the maximum size of a forward orbit; this was done by Ungar, who proved that $\max\limits_{w\in \mathfrak S_n}\abs{O_\pop(w)}=n$. This result was recently reproven by Albert and Vatter \cite{AlbertVatter}. 

\subsection{Coxeter pop-stack sorting}
Let $W$ be a finite Coxeter group. Let $S$ be a set of simple reflections of $W$, and let $\wo$ be the longest element of $W$.  Let $\leq_S$ denote the left weak order on $W$. For $J\subseteq S$, let $\wo(J)$ be the longest element of the parabolic subgroup of $W$ generated by $J$. Define the \defn{longest-element projection} from $W$ to the set of parabolic long elements by
\begin{align}
\pi_S:W &\to \big\{\wo(J) : J \subseteq S \big\} \nonumber\\
\pi_S(w)&=\bigvee_{s \leq_S w}^{\mathrm{Weak}} s.
\end{align}
That is, $\pi_S(w)$ is the join in left weak order of the set of simple reflections lying weakly below $w$. Alternatively, $\pi_S(w)=\wo(\Des_R(w))$ is the longest element of the parabolic subgroup generated by the right descent set $\Des_R(w)$ of $w$.

A natural generalization of the pop-stack sorting map to Coxeter groups was introduced in~\cite{defant2021stack} as 
\begin{align}
\pop_S&:W\to W \nonumber\\
\pop_S(w) &= w \cdot \pi_S(w)^{-1}.
\label{eq:pops}
\end{align} Indeed, as discussed in \cite{defant2021stack}, this operator agrees with the pop-stack sorting map when $W$ is the symmetric group $\mathfrak S_n$. Observe that $\pop_S$ fixes the identity element $e\in W$. On the other hand, if $w\in W$ is not the identity element, then $\pop_S(w)$ is strictly less than $w$ in the right weak order.  When $W$ is replaced by its braid group, the elements of the forward orbit of $w$ under $\pop_S$ are the prefixes of the Brieskorn normal form of $w$~\cite{brieskorn1971fundamentalgruppe,brieskorn1972artin}.

One of the main results from \cite{defant2021stack} is the following generalization of Ungar's theorem to arbitrary finite irreducible Coxeter groups. 

\begin{theorem}[{\cite{defant2021stack}}]\label{ColinThm}
Let $W$ be a finite irreducible Coxeter group with a set $S$ of simple reflections and with Coxeter number $h$. Then $\max\limits_{w\in W}\left|O_{\pop_S}(w)\right|=h$.
\end{theorem}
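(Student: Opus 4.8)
The plan is to establish the two inequalities $\max_{w\in W}\lvert O_{\pop_S}(w)\rvert\le h$ and $\max_{w\in W}\lvert O_{\pop_S}(w)\rvert\ge h$ separately. For the setup, recall that $\pop_S$ fixes $e$, and that whenever $w\ne e$ the factorization $w=\pop_S(w)\cdot\pi_S(w)$ is length‑additive with $\ell(\pi_S(w))=\ell(\wo(\Des_R(w)))\ge 1$; hence $\ell(\pop_S(w))<\ell(w)$, every forward orbit is a finite chain $w=w_0>w_1>\cdots>w_t=e$ in the right weak order, and $\lvert O_{\pop_S}(w)\rvert=t+1$. So it suffices to prove (i) $\pop_S^{\,h-1}(w)=e$ for every $w\in W$, and (ii) $\pop_S^{\,h-2}(w)\ne e$ for some $w\in W$.

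For the lower bound (ii), I would write down an explicit element. Since the Coxeter diagram of an irreducible $W$ is a tree, fix a proper $2$‑coloring $S=S_1\sqcup S_2$ and put $\Omega_i=\wo(S_i)$, a product of pairwise commuting simple reflections, so $c:=\Omega_1\Omega_2$ is a bipartite Coxeter element. It is classical that $\wo$ is given by the reduced word obtained by concatenating $h$ alternating blocks $\Omega_1\Omega_2\Omega_1\cdots$ (its bipartite $c$‑sorting word). Let $w^\star$ be the product of the first $h-1$ of these blocks, and for $1\le k\le h-1$ let $u_k$ be the product of the first $k$ blocks, so $w^\star=u_{h-1}$. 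Writing $\wo=u_k\cdot u_k'$ with $u_k'$ the product of the remaining blocks, the factorization is length‑additive; since $\Des_R(u_k)\cap\Des_L(u_k')=\emptyset$ for a length‑additive product while $\Des_L(u_k')$ contains all simple reflections of the color opening $u_k'$, one gets $\Des_R(u_k)=S_{i(k)}$ where $i(k)$ is the color of block $k$. Hence $\pi_S(u_k)=\wo(S_{i(k)})=\Omega_{i(k)}$, so $\pop_S$ peels off one block at a time: $\pop_S^{\,j}(w^\star)=u_{h-1-j}$ for $0\le j\le h-1$, giving $\lvert O_{\pop_S}(w^\star)\rvert=h$. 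Equivalently, $w^\star$ is its own Brieskorn normal form, with $h-1$ factors.

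For the upper bound (i), I would track the statistic
\[
M(w):=\max\{\operatorname{ht}(\beta):\beta\in\operatorname{Inv}(w)\}\qquad(\text{with }M(e):=0),
\]
where $\operatorname{Inv}(w)=\{\beta\in\Phi^+:w\beta\in\Phi^-\}$ is the inversion set of $w$ and $\operatorname{ht}$ is root height. Since every positive root of an irreducible root system has height at most $h-1$ (attained by the highest root) and $M(w)=0$ only for $w=e$, statement (i) follows immediately from the \emph{Key Lemma: $M(\pop_S(w))<M(w)$ whenever $w\ne e$}. To reduce the Key Lemma to a statement about roots, put $J=\Des_R(w)$; because $\operatorname{Inv}(w)$ is biclosed and contains each simple root indexed by $J$, it contains $\Phi_J^+$, so $w=\pop_S(w)\cdot\wo(J)$ is length‑additive, and using $\operatorname{Inv}(\wo(J))=\Phi_J^+$ together with the inversion‑set formula for a length‑additive product one obtains
\[
\operatorname{Inv}(\pop_S(w))=\wo(J)\bigl(\operatorname{Inv}(w)\setminus\Phi_J^+\bigr).
\]
Thus the Key Lemma is equivalent to the assertion that $\operatorname{ht}(\wo(J)\beta)<M(w)$ for every $\beta\in\operatorname{Inv}(w)\setminus\Phi_J^+$.

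I expect this last assertion to be the main obstacle, since it is not merely formal: although $\wo(J)$ is an involution permuting $\Phi^+\setminus\Phi_J^+$, it can \emph{raise} the height of a positive root lying outside $\Phi_J$ (for example $s_1(e_2-e_3)=e_1-e_3$ in type~$A$). What must be used is that the biclosedness of $\operatorname{Inv}(w)$, together with the fact that $\Des_R(w)=J$ \emph{exactly}, prevents such a $\beta$ from occurring in $\operatorname{Inv}(w)$ unless $\operatorname{Inv}(w)$ already contains a root of height at least $\operatorname{ht}(\wo(J)\beta)$: the difference $\wo(J)\beta-\beta$ lies in the $\mathbb Z$‑span of $\{\alpha_s:s\in J\}$, and since all of $\Phi_J^+$ lies in the biclosed set $\operatorname{Inv}(w)$, one should be able to reach, by adding simple roots of $J$ to $\beta$ along a root string inside $\operatorname{Inv}(w)$, a root of height $\ge\operatorname{ht}(\wo(J)\beta)$ --- once the coordinates on which $\wo(J)\beta$ has a smaller coefficient than $\beta$ are dealt with. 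Making this root‑combinatorial bookkeeping precise (or, equivalently, bounding the length of a Brieskorn/Garside‑type normal form directly) is the delicate heart of the argument.
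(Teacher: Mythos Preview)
This theorem is not proved in the present paper; it is quoted from \cite{defant2021stack} as background for the dual story, so there is no in-paper proof to compare your proposal against.

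On the merits of your outline: the lower bound via the bipartite $c$-sorting prefix $u_{h-1}$ is correct and complete---each $u_k$ really has $\Des_R(u_k)=S_{i(k)}$, so $\pop_S$ strips one colour block per step and $|O_{\pop_S}(u_{h-1})|=h$.

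The upper bound, however, is only a strategy. You correctly reduce everything to the Key Lemma $M(\pop_S(w))<M(w)$ and correctly identify $\operatorname{Inv}(\pop_S(w))=\wo(J)\bigl(\operatorname{Inv}(w)\setminus\Phi_J^+\bigr)$, but you then stop, yourself calling the remaining step ``the delicate heart of the argument.'' That gap is genuine. As you note, $\wo(J)$ can raise heights on $\Phi^+\setminus\Phi_J^+$, and the biclosedness/root-string heuristic you sketch is not a proof: adding simple roots $\alpha_j$ with $j\in J$ to $\beta$ one at a time need not stay inside $\Phi$, let alone inside $\operatorname{Inv}(w)$, and the coordinates on which $\wo(J)\beta$ has \emph{smaller} coefficient than $\beta$ still require a separate argument that you do not supply. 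As written, the proposal is incomplete precisely where the theorem has nontrivial content; completing this route amounts to reproving the main technical lemma of \cite{defant2021stack}.
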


\Cref{fig:b3} illustrates the action of $\pop_S$ on the hyperoctahedral group $B_3$. As predicted by \Cref{ColinThm}, the maximum size of a forward orbit in this diagram is $6$, which is the Coxeter number of $B_3$.

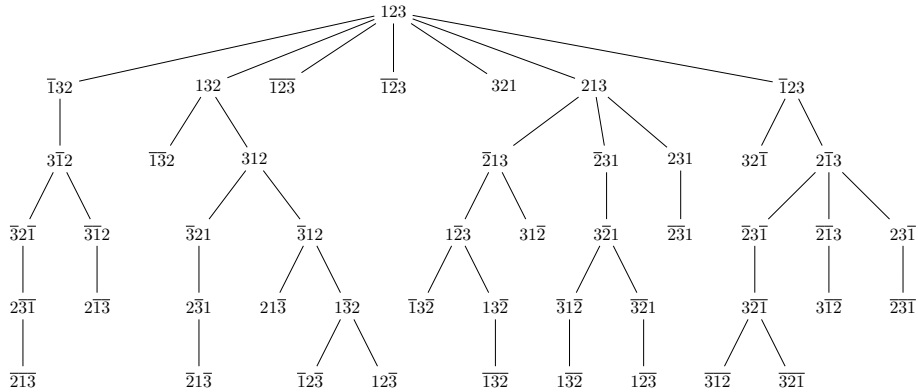
\begin{figure}[htbp]
\begin{center}
\scalebox{.65}{
\begin{tikzpicture}[scale=1.5]
\node (n1n2n3) at (5.0, -1) {$\overline{1}\overline{2}\overline{3}$};
\node (n1n23) at (6.5, -1) {$\overline{1}\overline{2}3$};
\node (n2n1n3) at (1.5, -5) {$\overline{2}\overline{1}\overline{3}$};
\node (2n3n1) at (1.5, -4) {$2\overline{3}\overline{1}$};
\node (n32n1) at (1.5, -3) {$\overline{3}2\overline{1}$};
\node (2n1n3) at (2.5, -4) {$2\overline{1}\overline{3}$};
\node (n3n12) at (2.5, -3) {$\overline{3}\overline{1}2$};
\node (3n12) at (2.0, -2) {$3\overline{1}2$};
\node (n132) at (2.0, -1) {$\overline{1}32$};
\node (321) at (8.0, -1) {$321$};
\node (n1n32) at (3.375, -2) {$\overline{1}\overline{3}2$};
\node (n21n3) at (3.875, -5) {$\overline{2}1\overline{3}$};
\node (2n31) at (3.875, -4) {$2\overline{3}1$};
\node (n321) at (3.875, -3) {$\overline{3}21$};
\node (21n3) at (4.875, -4) {$21\overline{3}$};
\node (n12n3) at (5.375, -5) {$\overline{1}2\overline{3}$};
\node (12n3) at (6.375, -5) {$12\overline{3}$};
\node (1n32) at (5.875, -4) {$1\overline{3}2$};
\node (n312) at (5.375, -3) {$\overline{3}12$};
\node (312) at (4.625, -2) {$312$};
\node (132) at (4.0, -1) {$132$};
\node (n13n2) at (6.875, -4) {$\overline{1}3\overline{2}$};
\node (n1n3n2) at (7.875, -5) {$\overline{1}\overline{3}\overline{2}$};
\node (13n2) at (7.875, -4) {$13\overline{2}$};
\node (1n23) at (7.375, -3) {$1\overline{2}3$};
\node (31n2) at (8.375, -3) {$31\overline{2}$};
\node (n213) at (7.875, -2) {$\overline{2}13$};
\node (1n3n2) at (8.875, -5) {$1\overline{3}\overline{2}$};
\node (n31n2) at (8.875, -4) {$\overline{3}1\overline{2}$};
\node (1n2n3) at (9.875, -5) {$1\overline{2}\overline{3}$};
\node (n3n21) at (9.875, -4) {$\overline{3}\overline{2}1$};
\node (3n21) at (9.375, -3) {$3\overline{2}1$};
\node (n231) at (9.375, -2) {$\overline{2}31$};
\node (n2n31) at (10.375, -3) {$\overline{2}\overline{3}1$};
\node (231) at (10.375, -2) {$231$};
\node (213) at (9.208333333333334, -1) {$213$};
\node (32n1) at (11.375, -2) {$32\overline{1}$};
\node (n3n1n2) at (10.875, -5) {$\overline{3}\overline{1}\overline{2}$};
\node (n3n2n1) at (11.875, -5) {$\overline{3}\overline{2}\overline{1}$};
\node (3n2n1) at (11.375, -4) {$3\overline{2}\overline{1}$};
\node (n23n1) at (11.375, -3) {$\overline{2}3\overline{1}$};
\node (3n1n2) at (12.375, -4) {$3\overline{1}\overline{2}$};
\node (n2n13) at (12.375, -3) {$\overline{2}\overline{1}3$};
\node (n2n3n1) at (13.375, -4) {$\overline{2}\overline{3}\overline{1}$};
\node (23n1) at (13.375, -3) {$23\overline{1}$};
\node (2n13) at (12.375, -2) {$2\overline{1}3$};
\node (n123) at (11.875, -1) {$\overline{1}23$};
\node (123) at (6.5, 0) {$123$};

\draw (n3n2n1) -- (3n2n1);
\draw (n3n21) -- (1n2n3);
\draw (n3n21) -- (3n21);
\draw (n3n1n2) -- (3n2n1);
\draw (n3n12) -- (2n1n3);
\draw (n3n12) -- (3n12);
\draw (n31n2) -- (1n3n2);
\draw (n31n2) -- (3n21);
\draw (n312) -- (1n32);
\draw (n312) -- (21n3);
\draw (n312) -- (312);
\draw (n32n1) -- (2n3n1);
\draw (n32n1) -- (3n12);
\draw (n321) -- (2n31);
\draw (n321) -- (312);
\draw (n2n3n1) -- (23n1);
\draw (n2n31) -- (231);
\draw (n2n1n3) -- (2n3n1);
\draw (n2n13) -- (2n13);
\draw (n2n13) -- (3n1n2);
\draw (n21n3) -- (2n31);
\draw (n213) -- (1n23);
\draw (n213) -- (213);
\draw (n213) -- (31n2);
\draw (n23n1) -- (2n13);
\draw (n23n1) -- (3n2n1);
\draw (n231) -- (213);
\draw (n231) -- (3n21);
\draw (n1n3n2) -- (13n2);
\draw (n1n32) -- (132);
\draw (n1n2n3) -- (123);
\draw (n1n23) -- (123);
\draw (n12n3) -- (1n32);
\draw (n123) -- (123);
\draw (n123) -- (2n13);
\draw (n123) -- (32n1);
\draw (n13n2) -- (1n23);
\draw (n132) -- (123);
\draw (n132) -- (3n12);
\draw (1n32) -- (12n3);
\draw (1n23) -- (13n2);
\draw (123) -- (132);
\draw (123) -- (213);
\draw (123) -- (321);
\draw (132) -- (312);
\draw (2n13) -- (23n1);
\draw (213) -- (231);
\end{tikzpicture}}
\end{center}
\caption{A diagram of the Coxeter pop-stack sorting operator $\pop_S:B_3\to B_3$. Each vertex in the tree is an element of $B_3$, represented as a signed permutation. The root vertex is the identity element $e$; the parent of each non-root vertex $w$ is $\pop_S(w)$. }
\label{fig:b3}
\end{figure}


Let us also remark that, by construction, the elements of $W$ that get sorted into the identity $e$ in at most one iteration of $\pop_S$ are exactly the longest elements of standard parabolic subgroups---hence, there are $2^{|S|}$ such elements.

\subsection{Coxeter pop-tsack torsing}
Let $T$ be the set of all reflections of $W$, and fix a Coxeter element $c$ of $W$ (a regular element of order $h$).  Let $\leq_T$ denote the absolute order on $W$. There is a general ``dual'' philosophy for finite Coxeter groups~\cite{bessis2003dual} in which the set $S$ of simple reflections is replaced by the set $T$ of all reflections, the longest element $\wo$ is replaced by the Coxeter element $c$, and the (left) weak order is replaced by the noncrossing partition lattice $\NC(W,c)$ (the interval $[e,c]$ in absolute order).

Define the \defn{noncrossing projection}
\begin{align}\label{EqNCProj}
\pi_T(\cdot,c)&:W \to \NC(W,c) \nonumber\\
\pi_T(w,c) &= \bigvee_{t \leq_T w}^{\NC(W,c)} t.
\end{align}
That is, $\pi_T(w,c)$ is the join in the noncrossing partition lattice $\NC(W,c)$ of the set of reflections lying weakly below $w$ in absolute order.  It is natural to make the following definition as the ``dual'' of $\pop_S$ from~\eqref{eq:pops}:

\begin{definition}
\label{def:dual_pop_stack}
The \defn{Coxeter pop-tsack torsing operator}\footnote{We have interchanged the roles of $S$ and $T$.} is the map
\begin{align*}
 \pop_T(\cdot,c)&:W\to W \nonumber \\
\pop_T(w,c) &= w \cdot \pi_T(w,c)^{-1}.
\end{align*}
\end{definition}

When $c$ is understood, we may abbreviate $\pi_T(w,c)$ and $\pop_T(w,c)$ as $\pi_T(w)$ and $\pop_T(w)$, respectively; we establish in~\Cref{sec:general_properties} many structural properties of $\pop_T$, including that its orbit structure does not depend on $c$.  
By construction, the elements $w\in W$ such that $\pop_T(w)=e$ are exactly the noncrossing partitions---hence, the number of such elements is the \defn{$W$-Catalan number} \[\left|\NC(W,c)\right|=\prod_{i=1}^n \frac{h+d_i}{d_i},\] where $d_1,\ldots,d_n$ are the degrees of $W$.

\Cref{fig:b3dual} illustrates the action of $\pop_T$ on $B_3$; additional data for other types is given in~\Cref{fig:data}.  Notice that every forward orbit of $\pop_T:B_3\to B_3$ contains the identity element $e$, which is a fixed point.  
Upon inspection of \Cref{fig:b3dual}, we see that there is a unique forward orbit of size $6$ and that all other forward orbits have size at most $5$. This special forward orbit of size $6$ is $O_{\pop_T}(c^{-1})=\{c^{-1},c^{-2},\ldots,c^{-5},e\}$. Furthermore, this forward orbit is ``isolated'' in the sense that for each $2\leq i\leq 5$, the only preimage of $c^{-i}$ under $\pop_T$ is $c^{-(i-1)}$. In~\Cref{sec:inverse_c_orbit}, we use uniform methods to show that this phenomenon holds for all finite irreducible Coxeter groups.

\begin{figure}[htbp]
\begin{center}
\scalebox{.55}{
\begin{tikzpicture}[scale=1.5]
\node (n3n1n2) at (4.75, -2) {$\overline{3}\overline{1}\overline{2}$};
\node (n31n2) at (5.75, -2) {$\overline{3}1\overline{2}$};
\node (2n3n1) at (5.25, -1) {$2\overline{3}\overline{1}$};
\node (n32n1) at (8.5, -1) {$\overline{3}2\overline{1}$};
\node (21n3) at (9.5, -1) {$21\overline{3}$};
\node (n3n21) at (2.75, -4) {$\overline{3}\overline{2}1$};
\node (n1n3n2) at (2.75, -3) {$\overline{1}\overline{3}\overline{2}$};
\node (2n1n3) at (2.75, -2) {$2\overline{1}\overline{3}$};
\node (2n31) at (3.75, -3) {$2\overline{3}1$};
\node (n321) at (4.75, -3) {$\overline{3}21$};
\node (n12n3) at (4.25, -2) {$\overline{1}2\overline{3}$};
\node (32n1) at (3.5, -1) {$32\overline{1}$};
\node (n2n3n1) at (5.75, -3) {$\overline{2}\overline{3}\overline{1}$};
\node (1n32) at (6.75, -3) {$1\overline{3}2$};
\node (1n2n3) at (6.25, -2) {$1\overline{2}\overline{3}$};
\node (n1n32) at (7.75, -4) {$\overline{1}\overline{3}2$};
\node (n2n1n3) at (7.75, -3) {$\overline{2}\overline{1}\overline{3}$};
\node (3n2n1) at (7.75, -2) {$3\overline{2}\overline{1}$};
\node (13n2) at (7.0, -1) {$13\overline{2}$};
\node (1n23) at (9.0, -1) {$1\overline{2}3$};
\node (n132) at (10, -1) {$\overline{1}32$};
\node (321) at (10.5, -1) {$321$};
\node (132) at (11.0, -1) {$132$};
\node (213) at (12.0, -1) {$213$};
\node (n3n2n1) at (12.5, -1) {$\overline{3}\overline{2}\overline{1}$};
\node (1n3n2) at (13.5, -1) {$1\overline{3}\overline{2}$};
\node (n3n12) at (14.5, -2) {$\overline{3}\overline{1}2$};
\node (3n12) at (15.5, -2) {$3\overline{1}2$};
\node (n23n1) at (15.0, -1) {$\overline{2}3\overline{1}$};
\node (n2n13) at (13, -1) {$\overline{2}\overline{1}3$};
\node (12n3) at (14, -1) {$12\overline{3}$};
\node (n312) at (16.0, -5) {$\overline{3}12$};
\node (n2n31) at (16.0, -4) {$\overline{2}\overline{3}1$};
\node (n1n2n3) at (16.0, -3) {$\overline{1}\overline{2}\overline{3}$};
\node (3n1n2) at (16.0, -2) {$3\overline{1}\overline{2}$};
\node (23n1) at (16.0, -1) {$23\overline{1}$};
\node (n21n3) at (17.0, -4) {$\overline{2}1\overline{3}$};
\node (3n21) at (17.0, -3) {$3\overline{2}1$};
\node (n13n2) at (17.0, -2) {$\overline{1}3\overline{2}$};
\node (n213) at (18.0, -3) {$\overline{2}13$};
\node (n231) at (19.0, -3) {$\overline{2}31$};
\node (n1n23) at (18.5, -2) {$\overline{1}\overline{2}3$};
\node (2n13) at (17.75, -1) {$2\overline{1}3$};
\node (n123) at (11.5, -1) {$\overline{1}23$};
\node (31n2) at (19.5, -2) {$31\overline{2}$};
\node (312) at (20.5, -2) {$312$};
\node (231) at (20.0, -1) {$231$};
\node (123) at (11.5, 0) {$123$};

\draw (n3n2n1) -- (123);
\draw (n3n21) -- (n1n3n2);
\draw (n3n1n2) -- (2n3n1);
\draw (n3n12) -- (n23n1);
\draw (n31n2) -- (2n3n1);
\draw (n312) -- (n2n31);
\draw (n32n1) -- (123);
\draw (n321) -- (n12n3);
\draw (n2n3n1) -- (1n2n3);
\draw (n2n31) -- (n1n2n3);
\draw (n2n1n3) -- (n1n32);
\draw (n2n1n3) -- (3n2n1);
\draw (n2n13) -- (123);
\draw (n21n3) -- (3n21);
\draw (n213) -- (n1n23);
\draw (n23n1) -- (123);
\draw (n23n1) -- (3n12);
\draw (n231) -- (n1n23);
\draw (n1n3n2) -- (2n1n3);
\draw (n1n2n3) -- (3n1n2);
\draw (n1n23) -- (2n13);
\draw (n12n3) -- (2n31);
\draw (n12n3) -- (32n1);
\draw (n123) -- (123);
\draw (n13n2) -- (2n13);
\draw (n13n2) -- (3n21);
\draw (n132) -- (123);
\draw (1n3n2) -- (123);
\draw (1n32) -- (1n2n3);
\draw (1n2n3) -- (13n2);
\draw (1n23) -- (123);
\draw (12n3) -- (123);
\draw (123) -- (13n2);
\draw (123) -- (132);
\draw (123) -- (2n3n1);
\draw (123) -- (2n13);
\draw (123) -- (21n3);
\draw (123) -- (213);
\draw (123) -- (23n1);
\draw (123) -- (231);
\draw (123) -- (32n1);
\draw (123) -- (321);
\draw (13n2) -- (3n2n1);
\draw (2n1n3) -- (32n1);
\draw (23n1) -- (3n1n2);
\draw (231) -- (31n2);
\draw (231) -- (312);
\end{tikzpicture}}
\end{center}
\caption{A diagram of the Coxeter pop-tsack torsing operator $\pop_T:B_3\to B_3$ using the Coxeter element $c=23\overline{1}$, which is $(\overline 1\,\,\overline 2\,\,\overline 3\,\,1\,\,2\,\,3)$ in cycle notation (bars denote negative signs). The root vertex of the tree is the identity element $e$; the parent of each non-root vertex $w$ is $\pop_T(w,c)$.}
\label{fig:b3dual}
\end{figure}
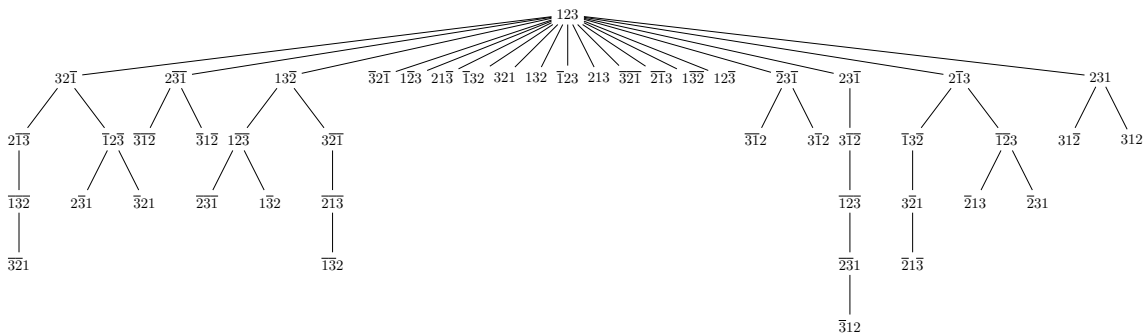

{
\renewcommand{\thetheorem}{\ref{thm:cinverseorbit}}
\begin{theorem}
The forward orbit of $c^{-1}$ under $\pop_T(\cdot,c)$ is \[O_{\pop_T}(c^{-1})=\{c^{-1}, c^{-2},\ldots, c^{-(h-1)}, e\}.\] The element $c^{-1}$ has no preimages under $\pop_T$. Moreover, if $2\leq i\leq h-1$, then the only preimage of $c^{-i}$ under $\pop_T$ is $c^{-(i-1)}$. 
\end{theorem}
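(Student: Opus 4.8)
The plan is to reduce all three assertions to a single statement about how $c$ permutes the roots, by first pinning down $\pi_T$ geometrically. Write $V$ for the reflection representation of $W$, $\Phi\subseteq V$ for the root system, and for $g\in W$ put $\mov(g)=\operatorname{im}(g-1)$ and $\mathrm{Fix}(g)=\ker(g-1)=\mov(g)^{\perp}$, so that $\ell_T(g)=\dim\mov(g)$. I would begin from three standard facts (to be cited from Section~\ref{sec:general_properties} or established directly): a reflection $t$ with root $\alpha_t$ satisfies $t\le_T g$ iff $\alpha_t\in\mov(g)$, so $\mov(g)$ is the span of the roots of the reflections below $g$; by Steinberg's theorem the pointwise stabilizer of $\mathrm{Fix}(y)$ is the parabolic $W_{\le_T y}:=\langle t\in T:t\le_T y\rangle$, whence $g\in W_{\le_T y}\iff\mathrm{Fix}(y)\subseteq\mathrm{Fix}(g)$; and $\NC(W,c)$ is a lattice. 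These combine to identify $\pi_T(w)$ with the least element of the up-set $\{y\in\NC(W,c):\mathrm{Fix}(y)\subseteq\mathrm{Fix}(w)\}$. Since every proper element of $\NC(W,c)$ lies under a coatom, and the coatoms are precisely the $tc$ $(t\in T)$, with $\mathrm{Fix}(tc)=\langle(c-1)^{-1}\alpha_t\rangle$ one-dimensional ($c-1$ is invertible because $\mathrm{Fix}(c)=0$), this sharpens to the criterion: $\pi_T(w)=c$ iff $(c-1)\mathrm{Fix}(w)$ contains no root.

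The one place I expect genuine work is the following \emph{Key Lemma}: for $1\le k\le h-1$, $\mathrm{Fix}(c^{k})$ contains no root (equivalently, $c$ acts on $\Phi$ with all orbits of size $h$). I would prove this uniformly. If $\alpha\in\Phi$ is fixed by $c^{k}$ and its $\langle c\rangle$-orbit has size $d$, then $d\mid\gcd(k,h)$, so $d\le h-1<h$; the orbit sum $\sum_{j=0}^{d-1}c^{j}\alpha$ is $c$-fixed, hence zero. Now take a regular eigenvector $v_{0}$ of $c$ for the eigenvalue $\zeta=e^{2\pi i/h}$ --- such $v_{0}$ exists because Coxeter elements are $\zeta$-regular, by Springer --- so that $\langle v_{0},\alpha\rangle\ne0$. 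Pairing $v_0$ against the vanishing orbit sum and using $\langle v_{0},c^{j}\alpha\rangle=\zeta^{-j}\langle v_{0},\alpha\rangle$ forces $\sum_{j=0}^{d-1}\zeta^{-j}=0$, i.e.\ $\zeta^{d}=1$, i.e.\ $h\mid d$, contradicting $d<h$. Springer regularity is the sole external input.

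Granting the Key Lemma, the forward-orbit statement is immediate: $\mathrm{Fix}(c^{-k})=\mathrm{Fix}(c^{k})$ and $c-1$ maps $\mathrm{Fix}(c^{k})$ onto itself (it is invertible on each of the $c$-eigenspaces comprising $\mathrm{Fix}(c^{k})$, none of which is the $1$-eigenspace), so the criterion gives $\pi_T(c^{-k})=c$ for $1\le k\le h-1$. Hence $\pop_T(c^{-k})=c^{-k}\cdot c^{-1}=c^{-(k+1)}$ in that range, and $\pop_T(e)=e$; iterating from $c^{-1}$ produces exactly $\{c^{-1},c^{-2},\dots,c^{-(h-1)},e\}$, which has $h$ elements since $c$ has order $h$.

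Finally, the preimage statements. Suppose $\pop_T(w)=c^{-i}$ with $1\le i\le h-1$, and set $p=\pi_T(w)\in\NC(W,c)$, so $w=c^{-i}p$. By the min-description, every reflection below $w$ is below $p$, so $\mov(w)\subseteq\mov(p)$, i.e.\ $\mathrm{Fix}(p)\subseteq\mathrm{Fix}(w)$; then for $v\in\mathrm{Fix}(p)$ we get $v=wv=c^{-i}pv=c^{-i}v$, so $\mathrm{Fix}(p)\subseteq\mathrm{Fix}(c^{-i})$. But $\pi_T(c^{-i})=c$ says that $c$ is the least element of the up-set $\{y\in\NC(W,c):\mathrm{Fix}(y)\subseteq\mathrm{Fix}(c^{-i})\}$; since $c$ is the maximum of $\NC(W,c)$, this up-set is just $\{c\}$, forcing $p=c$ and $w=c^{-(i-1)}$. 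For $i=1$ this would say $w=e$, which is impossible since then $p=\pi_T(e)=e\ne c$, so $c^{-1}$ has no preimage; for $2\le i\le h-1$ it pins $w$ to $c^{-(i-1)}$, which the forward-orbit statement confirms is a preimage, hence the unique one. The remaining ingredients --- the lattice property, Steinberg's theorem, the coatom computation --- are routine, so the real content is concentrated in the Key Lemma.
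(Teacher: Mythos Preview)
Your argument is correct, and it takes a genuinely different route from the paper's. The paper proves $\pi_T(c^{-i})=c$ by first showing that conjugation by $c$ preserves the set $\{t\in T:t\le_T c^{-i}\}$ (a short moved-space computation), and then invoking a separate lemma that the join in $\NC(W,c)$ of any full $c$-orbit of reflections equals $c$; that lemma is proved by observing the join must lie in the centralizer of $c$ (hence in $\langle c\rangle$, by Carter), and then using the folding $W\to I_2(h)$ to rule out all powers of $c$ other than $e$ and $c$. For the preimage assertions the paper uses its monotonicity lemma $\pi_T(\pop_T(x))\le_T\pi_T(x)$. You instead recast $\pi_T$ entirely in terms of fixed spaces via Steinberg, reduce $\pi_T(w)=c$ to the coatom criterion ``$(c-1)\mathrm{Fix}(w)$ contains no root'', and concentrate everything into the classical fact that no root is fixed by a nontrivial power of $c$---which you prove cleanly from the regular eigenvector (immediate here, since the paper \emph{defines} a Coxeter element to be $e^{2\pi i/h}$-regular). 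Your preimage argument then falls out of the same fixed-space description, bypassing monotonicity. The trade-off: your approach is more self-contained and avoids the Coxeter-plane folding and Carter's centralizer result, while the paper's approach reuses machinery (folding, the monotonicity lemma) developed for other purposes in the article and makes the connection to the dihedral case explicit.
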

\addtocounter{theorem}{-1}
}

Recall that a Coxeter group is called \defn{coincidental} if it is of type~$A$, type~$B$, type~$H_3$, or dihedral type.  We use case-by-case methods to understand the maximum possible size of a forward orbit of $\pop_T$ in the coincidental types (\Cref{sec:dihedral} for the dihedral types, \Cref{sec:typea} for type $A$, and \Cref{sec:typeb} for type $B$; the data for $H_3$ is in~\Cref{fig:data}).

{
\renewcommand{\thetheorem}{\ref{thm:main1}}
\begin{theorem}
Let $W$ be a Coxeter group of coincidental type with a fixed Coxeter element $c$ and Coxeter number $h$. Then 
\begin{itemize}
    \item $\pop_T^{h-1}(w)=e$ for all $w\in W$,
    \item  $\max\limits _{w\in W}\left|O_{\pop_T}(w)\right|=h$, and
    \item the only forward orbit of size $h$ is $O_{\pop_T}(c^{-1})$. 
\end{itemize}
\end{theorem}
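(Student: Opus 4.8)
We would establish all three bullets simultaneously by a case analysis over the coincidental types, after first isolating the uniform content. The key reduction is that the first bullet implies the other two: if $\pop_T^{h-1}(w)=e$ for every $w$, then every forward orbit is contained in $\{w,\pop_T(w),\dots,\pop_T^{h-1}(w)=e\}$, so $|O_{\pop_T}(w)|\le h$, and since $O_{\pop_T}(c^{-1})$ has size $h$ by \Cref{thm:cinverseorbit} the maximum is $h$; moreover $e$ is then the unique periodic point, so $|O_{\pop_T}(w)|=\tau(w)+1$ where $\tau(w):=\min\{k:\pop_T^k(w)=e\}$, and the uniqueness claim becomes $\tau(w)=h-1\iff w=c^{-1}$. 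Thus it suffices to prove $\tau(w)\le h-1$ for all $w$, with equality exactly when $w=c^{-1}$.

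The first ingredient we would use is that $\pop_T$ is compatible with noncrossing parabolic subgroups. If $w$ lies in the parabolic subgroup $W_u$ determined by some $u\in\NC(W,c)$ with $u<c$, then, because $[e,u]$ is a sublattice of $\NC(W,c)$, the join $\pi_T(w,c)$ is already computed inside $[e,u]$ and equals $\pi_T(w,u)$, so $\pop_T(\cdot,c)$ restricted to $W_u$ is the pop-tsack operator of $W_u$ relative to $u$; on a reducible parabolic it acts coordinatewise, hence its $\tau$ is the maximum of the coordinate $\tau$'s. Conversely, $\pi_T(w,c)\ne c$ holds precisely when $w$ lies in some proper noncrossing parabolic. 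A short check shows that in every coincidental type each irreducible factor of a proper parabolic subgroup is again coincidental and has Coxeter number at most $h-1$ (indeed at most $h-2$ in type $B$). Therefore, by induction on rank (with the dihedral case as the base), every $w$ with $\pi_T(w,c)\ne c$ satisfies $\tau(w)\le h-2$, and we are reduced to bounding $\tau$ on the \emph{full} elements, those with $\pi_T(w,c)=c$, for which $\pop_T(w)=wc^{-1}$.

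For a full element $w$ we would follow the sequence $w,wc^{-1},wc^{-2},\dots$ and let $j\ge 1$ be the first index at which $wc^{-j}$ is not full, so that $\pop_T^{i}(w)=wc^{-i}$ for $0\le i\le j$ and $\tau(w)=j+\tau(wc^{-j})$. If $wc^{-j}=e$ then $w=c^{j}$; since \Cref{thm:cinverseorbit} tells us $c,c^{2},\dots,c^{h-1}$ are all full, we may take $1\le j\le h-1$, whence $\tau(w)=j\le h-1$ with equality only for $w=c^{h-1}=c^{-1}$. Otherwise $v:=wc^{-j}$ is a non-full element $\ne e$ with $\tau(v)\le h-2$, and the crux is to show $j+\tau(v)\le h-1$, with equality forcing $w=c^{-1}$. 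This is the type-specific heart of the proof, carried out with the combinatorial model of $\NC(W,c)$ in each case: noncrossing set partitions for type $A$ in \Cref{sec:typea}, the $B_n$-analogue in \Cref{sec:typeb}, a direct enumeration for the dihedral types in \Cref{sec:dihedral}, and the tabulated data for $H_3$ in \Cref{fig:data}. The mechanism is to bound the run length $j$ against how close $v$ is to $e$: in type $A$, for instance, the coordinatewise decomposition above applied to the blocks of the noncrossing closure of the cycle partition of $v$ bounds $\tau(v)$ by $n$ minus the number of those blocks, while $j$ is at most one less than that same number of blocks, so the two estimates sum to at most $n-1=h-1$, with simultaneous equality forcing $v=e$ and hence $w=c^{-1}$.

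The main obstacle is precisely this last step: obtaining the sharp bound on the length of a run of consecutive full elements $w,wc^{-1},\dots$ and verifying that $c^{-1}$ is the unique element attaining the maximum value of $\tau$. The dihedral and $H_3$ cases are essentially bookkeeping: in $I_2(m)$ every reflection maps to $e$ in one step and every nontrivial rotation $r^{k}$ maps to $r^{k-1}$, so $c^{-1},c^{-2},\dots,c^{-(h-1)},e$ is the unique run of length $h$ and all other forward orbits are strictly shorter, and $H_3$ is a finite computation. In type $A$ the noncrossing-partition combinatorics is manageable, whereas type $B$ is expected to be the most delicate, because of the extra care needed for the zero block and the central symmetry inside $B_n$-noncrossing partitions.
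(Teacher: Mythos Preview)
Your reduction and the parabolic-induction treatment of non-full elements are correct, but the approach diverges from the paper and the core step for full elements has a genuine gap.  In the type-$A$ sketch you assert that if $v=wc^{-j}$ is the first non-full iterate and $\pi_T(v)$ has $b$ blocks, then $j\le b-1$ and $\tau(v)\le n-b$, summing to $n-1$.  The first inequality is false.  In $\mathfrak S_5$ with $c=(12345)$, take $v=(1234)$, so $\pi_T(v)=(1234)$ and $b=2$.  Then $vc=(13)(245)$ and $vc^{2}=(142)(35)$ both have crossing cycle partitions, hence both are full; taking $w=vc^{2}$ gives run length $j=2>b-1=1$.  (Here $\tau(v)=1$, so $j+\tau(v)=3\le 4=h-1$ does hold, but not via your two separate bounds.)  No alternative mechanism is supplied for controlling $j$ against $\tau(v)$, so the type-$A$ and type-$B$ cases remain open in your outline.

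The paper instead produces a monovariant that handles all elements uniformly, with no induction on rank and no full/non-full dichotomy.  In type $A$ one sets $\aexc(w)=\lvert\{i\in[n]:i<w^{-1}(i)\}\rvert$ and proves the exact identity $\aexc(\pop_T(w))=\aexc(w)-\cyc_{>1}(\pi_T(w))$ (\Cref{thm:typeAantiexc}).  Since the correction term is at least $1$ whenever $w\ne e$, since $\aexc(w)=0$ only for $w=e$, and since $\aexc(w)\le n-1$ with equality only for $c^{-1}$, all three bullets follow at once.  (In the counterexample above, $\aexc$ runs $3,2,1,0$ along the orbit of $w$.)  Type $B$ is handled by the identical argument with the order $\overline 1\prec\cdots\prec\overline n\prec 1\prec\cdots\prec n$ (\Cref{thm:typeBantiexc}), and also follows from type $A$ via the $A_{2n-1}\to B_n$ folding---so contrary to your expectation it is not the most delicate case.
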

\addtocounter{theorem}{-1}
}

Beyond the coincidental types, a few things can go awry.  The first unexpected phenomenon is that there can be more than one forward orbit of size $h$.  In type $D_n$, for example, the maximum size of a forward orbit is $h=2n-2$---but there may be several such orbits ($D_4$ has seven forward orbits of size $6$).  In~\Cref{sec:typed}, we prove the following weaker version of \Cref{thm:main1}.

{
\renewcommand{\thetheorem}{\ref{thm:main2}}
\begin{theorem}
Let $W$ be a Coxeter group of type $D_n$. Then 
\begin{itemize}
    \item $\pop_T^{2n-3}(w)=e$ for all $w\in W$, and
    \item  $\max\limits _{w\in W}\left|O_{\pop_T}(w)\right|=2n-2$.
\end{itemize}
\end{theorem}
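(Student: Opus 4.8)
The plan is to mimic the structure of the coincidental-type argument from \Cref{thm:main1}, but to replace the tight control over forward orbits by two coarser statements: a uniform bound $\pop_T^{2n-3}(w)=e$ for every $w\in W$, and a matching lower bound coming from $O_{\pop_T}(c^{-1})$, whose size is $h=2n-2$ by \Cref{thm:cinverseorbit}. Since \Cref{thm:cinverseorbit} already supplies a forward orbit of size exactly $h=2n-2$, the content of the theorem is the \emph{upper} bound: that no element of $D_n$ survives $2n-3$ applications of $\pop_T$ without reaching $e$. Equivalently, writing $\ell_T$ for absolute length, I want to show that $w\neq e$ forces $\ell_T(\pop_T(w)) < \ell_T(w)$ \emph{most} of the time, and more precisely that the absolute length can fail to drop only in a controlled way that cannot persist for $2n-3$ steps.

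First I would set up the combinatorial model: realize $D_n$ as the group of signed permutations with an even number of sign changes, fix a convenient bipartite (or linear) Coxeter element $c$, and recall the description of $\NC(D_n,c)$ in terms of noncrossing-type diagrams (Athanasiadis--Reiner, Bessis--Corran). The key local computation is: given $w\in D_n$, identify the set $\{t\le_T w\}$ of reflections below $w$ in absolute order, compute its join $\pi_T(w)$ in $\NC(D_n,c)$, and read off $\pop_T(w)=w\,\pi_T(w)^{-1}$. I would then prove a ``length-drop lemma'': if $w\neq e$ then $\ell_T(\pop_T(w))\le \ell_T(w)-1$ unless $w$ lies in an explicitly describable exceptional set, and on that exceptional set the orbit structure is still short (this is where the bound $2n-3$ rather than $2n-2$ comes from — one of the ``first steps'' out of a non-noncrossing element need not decrease $\ell_T$, but thereafter every step does, and $\ell_T(w)\le n$ in $D_n$). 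Combining: an arbitrary $w$ reaches a noncrossing partition within a bounded number of steps, and a noncrossing partition $p\neq e$ has $\pop_T(p)=e$, so the whole forward orbit after the initial segment collapses quickly; bookkeeping the worst case gives $\pop_T^{2n-3}(w)=e$.

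The main obstacle I anticipate is the join computation in $\NC(D_n,c)$: unlike the weak-order join used for $\pop_S$, the noncrossing join is genuinely subtle in type $D$ (the lattice is not as rigid, and joins of reflections can ``jump'' in absolute length by more than expected), so establishing the length-drop lemma with the correct exceptional set — and verifying that the exceptions contribute only the single extra step accounting for $2n-3$ versus $2n-2$ — will require care. I would handle this by stratifying $D_n$ according to the cycle type / sign pattern of $w$ (the analogue of the type-$B$ casework in \Cref{sec:typeb}), treating the ``type $A$-like'' elements via the already-understood type-$A$ result and isolating the genuinely type-$D$ phenomena (the pairs of elements differing by the diagram automorphism, and elements involving the two ``fork'' nodes) as the exceptional stratum. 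Finally, to pin down $\max_{w}|O_{\pop_T}(w)|=2n-2$ exactly, I combine the upper bound $\pop_T^{2n-3}(w)=e$ (which gives $|O_{\pop_T}(w)|\le 2n-2$) with the orbit $O_{\pop_T}(c^{-1})$ of size $2n-2$ from \Cref{thm:cinverseorbit}.
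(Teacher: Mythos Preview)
Your proposed length-drop lemma is false, and the arithmetic built on it does not give the bound you need. The forward orbit of $c^{-1}$ already provides a counterexample: by \Cref{prop:forward_cinv} we have $\pop_T(c^{-i})=c^{-(i+1)}$ for $1\le i\le h-1$, and the absolute lengths $\ell_T(c^{-1}),\ell_T(c^{-2}),\ldots,\ell_T(c^{-(h-1)})$ are \emph{not} strictly decreasing (they oscillate as $i$ varies), so $\ell_T$ is not the right monovariant. More decisively, even if a length-drop lemma with a single exceptional step held, the bound $\ell_T(w)\le n$ would force every forward orbit to have size at most $n+2$; for $n\ge 4$ this is strictly smaller than $2n-2$, contradicting the very orbit $O_{\pop_T}(c^{-1})$ you invoke for the lower bound. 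Thus no argument that tracks $\ell_T$ and allows only $O(1)$ exceptions can yield the correct upper bound $2n-3$. (This is already visible in the coincidental types: in types~$A$ and~$B$ the paper does not track $\ell_T$ either, but rather an antiexceedance statistic whose maximum value is $h-1$, not the rank.)

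The paper's proof takes a completely different route. Working in the Athanasiadis--Reiner circle model for $\NC(D_n,c)$, it fixes $j\in\pm[n-1]$ and tracks the position of $u_k^{-1}(j)$ around the circle as $k$ increases, where $u_k=\pop_T^k(u_0)$. The key lemma (\Cref{lem:DMainLemma}) shows that each step moves $u_k^{-1}(j)$ strictly clockwise (with a carefully analyzed detour when the trajectory passes through $\{\overline n,n\}$ at the center), so that after $2n-4$ steps $u_{2n-4}^{-1}(j)$ is either $j$ or its immediate clockwise predecessor. A five-case analysis based on the position of $u_{2n-4}^{-1}(m)$, where $m=u_0(n)$, then shows directly that $u_{2n-4}\in\NC(D_n,c)$, whence $u_{2n-3}=e$. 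The monovariant here is a \emph{circular position} statistic taking up to $2n-2$ values, not absolute length; this is what makes the bound $2n-3$ come out correctly.
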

\addtocounter{theorem}{-1}
}

\Cref{conj:A} and \Cref{conj:B} give conjectural formulas for the number of elements with forward orbits of size $h$ or $h-1$ when $W$ is of type $A$ or $B$. \Cref{conj:D} gives a conjectural formula for the number of elements whose forward orbits have size $h$ when $W$ is of type $D$. 

Things go even more haywire beyond types $A,B,D,H_3,$ and $I_2(m)$: not only do the remaining types have forward orbits of size greater than $h$, but they have periodic orbits that do not contain the identity.  This contrasts with $\pop_S$, which always moves each non-identity element strictly downward in the right weak order. 



Call an element $x\in W$ \defn{periodic} if $\pop_T^k(x)=x$ for some $k\geq 1$; in this case, we also say the forward orbit $O_{\pop_T}(x)$ is \defn{periodic}. In general, a finite irreducible Coxeter group can contain periodic orbits other than that of the identity element.

{
\renewcommand{\thetheorem}{\ref{thm:periodic}}
\begin{theorem}
The map $\pop_T$ has periodic orbits of size $h$ for $W$ of types $E_6$, $E_7$, $E_8$, $F_4$, or $H_4$. 
\end{theorem}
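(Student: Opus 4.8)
The statement asserts, for each listed type, the existence of a single non-identity periodic point of period exactly $h$. Note first that $e$ is the unique fixed point of $\pop_T$: if $\pop_T(w,c)=w$ then $\pi_T(w,c)=e$, and since $\pi_T(w,c)=\bigvee^{\NC(W,c)}\{t\in T:t\leq_T w\}$, this forces the set $\{t\in T:t\leq_T w\}$ to be empty, i.e.\ $w=e$. A periodic orbit is a single cycle under $\pop_T$, and a cycle of length greater than $1$ has no fixed point; hence any periodic orbit of size $>1$ automatically avoids $e$. So it suffices to produce, for each of $E_6,E_7,E_8,F_4,H_4$, an explicit $x\in W$ with $\pop_T^h(x)=x$ and with $x,\pop_T(x),\ldots,\pop_T^{h-1}(x)$ pairwise distinct. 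The proof is then a finite computation.

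The plan is first to set up the evaluation of $\pop_T(\cdot,c)$ and then to search. Since $W$ is a real reflection group, $\ell_T(u)=\mathrm{codim}\,\mathrm{Fix}(u)$, so for a given $w$ the atoms $\{t\in T:t\leq_T w\}$ are obtained by testing, for each reflection $t$, whether $\ell_T(tw)=\ell_T(w)-1$. The lattice $\NC(W,c)=[e,c]_T$ has $\Cat(W)=\prod_i(h+d_i)/d_i$ elements (for instance, $25080$ for type $E_8$), which I would enumerate once; then $\pi_T(w,c)$ is the unique $\leq_T$-minimal element of $\NC(W,c)$ lying above all of those atoms, and $\pop_T(w,c)=w\cdot\pi_T(w,c)^{-1}$. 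With this in hand, one finds a periodic orbit by iterating $\pop_T$ from a seed until the trajectory settles into its terminal cycle and then reading off the cycle length; a short search yields cycles of length $h$ in $F_4$ ($h=12$), $E_6$ ($h=12$), $E_7$ ($h=18$), and $H_4$ ($h=30$). For $E_8$ ($h=30$) a full enumeration of $W$ is impractical, but exhibiting one orbit requires only iterating $\pop_T$ from a single seed after the one-time enumeration of $\NC(E_8,c)$; I would record, for each type, one element of the resulting orbit (e.g.\ as a word in the simple reflections) so that the reader can reverify $\pop_T^h(x)=x$ and that the orbit has exactly $h$ elements.

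The main obstacle is not conceptual but a matter of trustworthy bookkeeping: correctness rests on computing joins in $\NC(W,c)$ correctly (the likeliest place for an error), on evaluating $\ell_T$ via the fixed-space formula in the noncrystallographic case $H_4$, and, for $E_8$, on certifying a cycle without touching all of $W$. I would guard against these by checking each exhibited orbit directly from the recorded word: confirming every $\pop_T$ step, confirming the $h$ listed elements are pairwise distinct, and confirming that the orbit returns to its start, so that its period is exactly $h$ rather than a proper divisor. As additional validation, the same implementation should reproduce the small cases of \Cref{thm:main1} and \Cref{thm:main2} as well as the orbit $O_{\pop_T}(c^{-1})$ of \Cref{thm:cinverseorbit}.
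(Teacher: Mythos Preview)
Your proposal is correct and follows the same computational strategy as the paper: evaluate $\pop_T$ explicitly, iterate, and certify an explicit periodic orbit of length $h$ in each type. The paper's proof differs in two shortcuts worth noting. First, rather than searching blindly for seeds, the paper identifies the periodic orbits structurally as the sets $O_k=\{w\in W:c^w=c^k\}$ for $k\not\equiv\pm 1\pmod h$ coprime to $h$ with $c$ conjugate to $c^k$; this tells you exactly where to look and explains why such orbits should have size $h$. Second, for $E_8$ the paper avoids direct computation by invoking the folding machinery (\Cref{thm:pop_embedding}): since $H_4$ unfolds into $E_8$ and $\unfold$ intertwines $\pop_{T'}$ with $\pop_T$, the already-certified periodic orbits in $H_4$ lift to periodic orbits of the same size $h=30$ in $E_8$. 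Your direct approach would also work---iterating $\pop_T$ from one seed in $E_8$ is feasible once $\NC(E_8,c)$ is tabulated---but the folding argument reuses prior theory and sidesteps the question of how to choose a productive seed in a group of order $\sim 7\times 10^8$.
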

\addtocounter{theorem}{-1}
}

\begin{figure}[htbp]
\begin{center}
\scalebox{0.8}{
\begin{tabular}{c|cccccccccccccccc|c}
Type & 0 & 1 & 2 & 3 & 4 & 5 & 6 & 7 & 8 & 9 & 10 & 11 & 12 & 13 & 14 & 15 & $\infty$\\ \hline
$A_2$ & 1 & 4 & 1  & & & & & & & & & & & & & &\\
$A_3$ & 1 & 13 & 9 & 1  & & & & & & & & & & & & &\\
$A_4$ & 1 & 41 & 56 & 21 & 1 & & & & & & & & & & & &\\
$A_5$ & 1 & 131 & 305 & 234 & 48 & 1  & & & & & & & & & & &\\ \hline
$B_2$ & 1 & 5 & 1 & 1  & & & & & & & & & & & & &\\
$B_3$ & 1 & 19 & 13 & 10 & 4 & 1  & & & & & & & & & & &\\
$B_4$ & 1 & 69 & 101 & 91 & 61 & 49 & 11 & 1  & & & & & & & & &\\
$B_5$ & 1 & 251 & 646 & 816 & 686 & 761 & 466 & 186 & 26 & 1 & & & & & & & \\ \hline
$D_4$ & 1 & 49 & 85 & 34 & 15 & 7 & & & & & & & & & & &\\
$D_5$ & 1 & 181 & 565 & 523 & 301 & 217 & 107 & 25 & & & & & & & & &\\
$D_6$ & 1 & 671 & 3336 & 5396 & 4416 & 3641 & 2946 & 2026 & 536 & 71 & & & & & & &\\
$D_7$ & 1 & 2507 & 18872 & 45274 & 55701 & 50960 & 50835 & 50643 & 32080 & 13193 & 2313 & 181 & & & & &\\ \hline
$F_4$ & 1 & 104 & 171 & 194 & 191 & 119 & 71 & 71 & 71 & 59 & 59 & 11 & 3 & 3 & & & 24 \\
$E_6$ & 1 & 832 & 4619 & 8214 & 7843 & 8039 & 7307 & 4835 & 3407 & 2687 & 2423 & 1055 & 371 & 107 & 68 & 8 & 24 \\ \hline
$H_3$ & 1 & 31 & 21 & 16 & 21 & 11 & 6 & 6 & 6 & 1 & & & & & & &\\
\end{tabular}}
\end{center}
\caption{Some data for the map $\pop_T$.  Columns labeled by $i$ record the number of elements requiring $i$ iterations of $\pop_T$ to reach the identity.  The column labeled by $\infty$ represents elements that lie in periodic orbits other than the orbit $O_{\pop_T}(e)=\{e\}$.}
\label{fig:data}
\end{figure}


From~\Cref{thm:main1,thm:main2,thm:periodic}, we conclude the following characterization of coincidental Coxeter groups.
\begin{corollary}
\label{cor:main4}
A finite irreducible Coxeter group $W$ with Coxeter number $h$ is coincidental if and only if $\pop_T:W\to W$ has a unique forward orbit of size $h$.
\end{corollary}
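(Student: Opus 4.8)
The plan is to deduce the corollary from the classification of finite irreducible Coxeter groups together with \Cref{thm:cinverseorbit}, \Cref{thm:main1}, \Cref{thm:main2}, and \Cref{thm:periodic}; almost all of the real work has already been carried out, and what remains is to assemble it. The forward implication is immediate: if $W$ is coincidental, then the third bullet of \Cref{thm:main1} already asserts that $O_{\pop_T}(c^{-1})$ is the \emph{only} forward orbit of size $h$, so $\pop_T$ has a unique forward orbit of that size.

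For the reverse implication I would argue by contraposition. Suppose $W$ is not coincidental; then by the classification $W$ has type $D_n$ with $n\ge 4$, $E_6$, $E_7$, $E_8$, $F_4$, or $H_4$. In every one of these cases \Cref{thm:cinverseorbit} already provides one forward orbit of size $h$, namely $O_{\pop_T}(c^{-1})=\{c^{-1},\dots,c^{-(h-1)},e\}$, which contains the fixed point $e$ and so is not periodic, since $\pop_T^{h-1}(c^{-1})=e\neq c^{-1}$. Thus the whole task is to exhibit a \emph{second} forward orbit of size $h$ distinct from $O_{\pop_T}(c^{-1})$. For the five exceptional types this is immediate from \Cref{thm:periodic}, which supplies a periodic forward orbit of size $h$: such an orbit cannot equal $O_{\pop_T}(c^{-1})$, because a forward orbit that reaches the fixed point $e$ cannot be periodic once $h\geq 2$. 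For type $D_n$ I would invoke \Cref{thm:main2}: it says the maximum orbit size is exactly $h=2n-2$ and $\pop_T^{2n-3}(w)=e$ for all $w$, so the forward orbits of size $h$ are exactly those of the elements sitting at depth $h-1$ in the tree rooted at $e$. The case analysis of \Cref{sec:typed} that establishes \Cref{thm:main2} describes these depth-$(h-1)$ elements, and there is always strictly more than one of them (as already visible for $D_4$, which has seven); extracting from that analysis one explicit depth-$(h-1)$ element not lying on the chain $\{c^{-1},\dots,c^{-(h-1)}\}$ produces the required second orbit.

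The step I expect to be the main obstacle is this last one. \Cref{thm:main2} as stated pins down only the \emph{size} of the largest forward orbits in type $D_n$, not their number, so it cannot be quoted as a black box here; the real point is to organize the argument of \Cref{sec:typed} so that it exhibits at least two elements of $D_n$ reaching $e$ in exactly $2n-3$ steps (one of them being $c^{-1}$), rather than merely bounding the number of steps needed. Everything else — running through the classification and checking in each case that the second orbit (from \Cref{thm:periodic} or from \Cref{sec:typed}) is genuinely a different set from $O_{\pop_T}(c^{-1})$ — is routine.
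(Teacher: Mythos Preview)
Your overall plan coincides with the paper's: the corollary is stated there as an immediate consequence of \Cref{thm:main1,thm:main2,thm:periodic}, with no further argument, and your outline is a faithful expansion of that one sentence. The forward direction and the exceptional-type half of the reverse direction are exactly as you describe.

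The gap you flag for type $D_n$ is real, and it is not closed in the paper either. The only evidence the paper offers for multiple size-$h$ orbits in $D_n$ is the informal remark in the introduction that ``$D_4$ has seven forward orbits of size $6$'' together with the small-rank data in \Cref{fig:data}; the count in \Cref{conj:D} is explicitly left as a conjecture. Your hope that the argument of \Cref{sec:typed} can be reorganised to produce a second depth-$(2n-3)$ element is optimistic: \Cref{lem:DMainLemma} and the ensuing case analysis give only an \emph{upper} bound on the depth of an arbitrary element (by pinning down the possible shapes of $u_{2n-4}$), and never establish a lower bound on the depth of any particular element other than $c^{-1}$. A complete proof therefore still requires, for each $n\ge 4$, an explicit $w\in D_n\setminus\{c^{-1}\}$ with $\pop_T^{2n-4}(w)\neq e$, and neither the paper nor your outline supplies one.
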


\subsection{Normal forms}
Because a presentation of the braid group of a finite Coxeter group is given by simply forgetting that simple reflections square to the identity, the Coxeter group embeds canonically into the positive braid monoid by simply lifting reduced expressions (in simple reflections).  It has been an open problem to find a similar ``canonical'' embedding in the dual braid monoid.\footnote{We thank T.~Gobet for bringing this problem to our attention.}  The pop-tsack torsing operator gives a solution to this problem for those finite Coxeter groups in which the only periodic orbit under $\pop_T$ is that of the identity---that is, for coincidental Coxeter groups and for type~$D$.

 {
\renewcommand{\thetheorem}{\ref{thm:normal_form}}
\begin{theorem}
Let $W$ be a finite irreducible Coxeter group whose only periodic orbit under $\pop_T$ is that of the identity.  For $w\in W$, write $w_1=w$ and $w_i=\pop_T(w_{i-1},c)$ for $i\geq 2$. Let $k=\lvert O_{\pop_T}(w)\rvert-1$ so that (assuming $w\neq e$) $w_k\neq w_{k+1}=e$.  Then the factorization \[w=\pi_T(w_k,c)\pi_T(w_{k-1},c)\cdots\pi_T(w_1,c)\] gives a lift from $W$ to the dual braid monoid of $W$.
\end{theorem}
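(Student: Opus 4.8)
The plan is to show that the proposed word $\pi_T(w_k,c)\pi_T(w_{k-1},c)\cdots\pi_T(w_1,c)$ is a geodesic factorization of $w$ in the absolute length, with each factor a noncrossing partition, so that it names an element of the dual braid monoid (in the Birman--Ko--Lee / Bessis presentation, whose elements are precisely products of noncrossing partitions). Recall the dual braid monoid $M(W,c)$ has a generating set indexed by $\NC(W,c)$ with relations $\underline{x}\,\underline{y}=\underline{xy}$ whenever $x,y\in\NC(W,c)$ and $\ell_T(x)+\ell_T(y)=\ell_T(xy)$ with $xy\in\NC(W,c)$; a word in these generators descends to a well-defined monoid element once we specify the word itself, so the content of the theorem is that the stated word is a \emph{canonical} one — equivalently, that different $w$'s give words that are not monoid-equivalent, and that the factors genuinely multiply back to $w$ in $W$. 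The first point is automatic: $\pi_T(w_i,c)\in\NC(W,c)$ by definition of $\pi_T$, and by \Cref{def:dual_pop_stack} we have $w_i=\pop_T(w_{i-1})\cdot$ — unwinding, $w_{i-1}=w_i\cdot\pi_T(w_{i-1},c)$, hence telescoping gives $w=w_1=w_k\cdot\pi_T(w_{k-1},c)\cdots\pi_T(w_1,c)$, and since $w_k=\pi_T(w_k,c)$ (as $w_{k+1}=e$ means $w_k$ is itself a noncrossing partition), we get exactly $w=\pi_T(w_k,c)\pi_T(w_{k-1},c)\cdots\pi_T(w_1,c)$ as an identity in $W$. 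The hypothesis that the only periodic orbit is $\{e\}$ is precisely what guarantees $k$ is finite (every forward orbit terminates at $e$), so the word is well-defined.

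Next I would verify the length additivity that makes this a valid — indeed normal — form in $M(W,c)$. Here I would use the structural results of \Cref{sec:general_properties} (which the excerpt says establish ``many structural properties of $\pop_T$''); in particular, one expects a statement there that $\ell_T(w) = \ell_T(\pop_T(w)) + \ell_T(\pi_T(w,c))$, i.e. that $\pop_T$ moves $w$ down by exactly $\ell_T(\pi_T(w))$ in absolute length, equivalently that $\pi_T(w,c)\leq_T w$ in the absolute order. Granting this, induction on $k$ gives $\ell_T(w)=\sum_{i=1}^{k}\ell_T(\pi_T(w_i,c))$, so the factorization is reduced. To upgrade ``reduced factorization into noncrossing partitions'' to ``element of the dual braid monoid'': every reduced factorization $w=x_1\cdots x_m$ of $w\leq_T c$ with each $x_j\in\NC(W,c)$ lies in $M(W,c)$ because consecutive partial products $x_j x_{j+1}\cdots$ are $\leq_T c$ and the local length condition holds at each step; this is standard in Bessis's framework and I would cite it rather than reprove it. What remains for "it gives a lift from $W$" is injectivity: distinct $w,w'\in W$ give distinct words. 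But the word determines $w$ — just multiply the factors in $W$ — so injectivity is immediate; the only subtlety would be if the theorem intends the stronger claim that the map $w\mapsto$ (monoid element) is injective, which again follows since the monoid surjects onto $W$ and the composite $W\to M(W,c)\to W$ is the identity by the telescoping computation above.

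The main obstacle, and the step I would spend the most care on, is confirming that $\pi_T(w,c)\leq_T w$ — i.e. that every reflection below $w$ used in the join-definition of $\pi_T$ stays below $w$ after taking the $\NC$-join, and that the resulting noncrossing partition is an absolute-order prefix of $w$ giving the clean length split $\ell_T(w)=\ell_T(\pop_T(w))+\ell_T(\pi_T(w))$. This is a statement about the interaction of the noncrossing join with the (non-lattice!) absolute order, and it is the natural ``dual'' of the elementary fact that $\pi_S(w)\leq_S w$ for $\pop_S$; I would expect it to be proved in \Cref{sec:general_properties}, so here I would simply invoke it. A secondary point to check is that the word has the \emph{normal-form} property claimed implicitly (i.e. it is the greedy/maximal factorization, the dual analogue of Brieskorn normal form for $\pop_S$): one shows $\pi_T(w_i,c)$ is the $\leq_T$-largest noncrossing partition that is simultaneously $\leq_T w_i$ and ``left-divides'' $w_i$ in $M(W,c)$, which is exactly the defining maximality of the join in \eqref{EqNCProj}. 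Assembling these, the factorization is the dual braid monoid normal form of the lift of $w$, completing the proof.
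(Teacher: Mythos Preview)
Your first paragraph is correct and is in fact the entire argument: the paper gives no formal proof of this theorem, treating it as immediate from the observation that each $\pi_T(w_i,c)$ is a noncrossing partition (hence a simple element of the dual braid monoid), that the product telescopes to $w$ in $W$, and that the hypothesis on periodic orbits makes $k$ finite. Your section argument for injectivity is exactly right.

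The remainder of your proposal, however, rests on a claim that is both unnecessary and false. You assert (and say you would ``invoke from \S4'') that $\pi_T(w,c)\leq_T w$, giving length additivity $\ell_T(w)=\ell_T(\pop_T(w))+\ell_T(\pi_T(w,c))$. This fails already in $\mathfrak S_4$ with $c=(1234)$: for $w=(1\,3)(2\,4)$ the two reflections $(1\,3)$ and $(2\,4)$ below $w$ cross, so their join in $\NC(\mathfrak S_4,c)$ is $\pi_T(w)=c$ itself, which has $\ell_T(c)=3>2=\ell_T(w)$. Indeed $\pop_T(w)=w c^{-1}=(1234)$, so $\pop_T$ \emph{increases} reflection length here. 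Nothing of the sort is proved in the paper's \S4 (what is proved there, \Cref{lem:monotone}, is the much weaker monotonicity $\pi_T(\pop_T(w))\leq_T\pi_T(w)$), and the analogy with $\pi_S(w)\leq_S w$ simply breaks in the dual setting.

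Fortunately none of this is needed. The dual braid monoid is a monoid generated by (lifts of) the elements of $\NC(W,c)$; any finite sequence of noncrossing partitions therefore defines an element of $M(W,c)$, with no reducedness condition to verify. The factorization in the theorem is generally \emph{not} $T$-reduced and is \emph{not} claimed to be the greedy normal form of its image in $M(W,c)$; the theorem only asserts that $w\mapsto \underline{\pi_T(w_k)}\cdots\underline{\pi_T(w_1)}$ is a well-defined section of the projection $M(W,c)\to W$, which is exactly what your first paragraph establishes.
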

\addtocounter{theorem}{-1}
}

It would be interesting to study the properties of this lift or to extend it to complex reflection groups (one potential issue is that for such groups, some reflections may not be noncrossing partitions).

We conclude with a generalization of stabilized-interval free permutations (as in \cite{callan2004counting,blitvic2014stabilized}) to finite Coxeter groups using our noncrossing projection map.

\section{Preliminaries}
Throughout this article, we assume that $W$ is a finite irreducible Coxeter group with a set $S$ of simple reflections. The \defn{rank} of $W$ is $|S|$. Given $g,w\in W$, we write $w^g$ for the conjugate $gwg^{-1}$ of $w$ by $g$. Let $T=\{s^w:s\in S, w\in W\}$ denote the set of \defn{reflections} of $W$. A \defn{$T$-word} for an element $w\in W$ is a word $t_1\cdots t_r$ over the alphabet $T$ that is equal to $w$ when considered as an element of $W$. The \defn{reflection length} of $w$, denoted $\ell_T(w)$, is the minimum length of a $T$-word for $w$. The \defn{absolute order} on $W$, denoted $\leq_T$, is defined by saying $v\leq_T w$ if $\ell_T(wv^{-1})=\ell_T(w)-\ell_T(v)$. Since $T$ is closed under conjugation by elements of $W$, we have $v\leq_T w$ if and only if $\ell_T(v^{-1}w)=\ell_T(w)-\ell_T(v)$. 

It is often useful to consider the \defn{geometric representation} of $W$ on a vector space $V\cong\mathbb R^{|S|}$. We refer the reader to \cite{armstrong2009generalized, BjornerBrenti} for more thorough discussions of this representation; here, we simply mention the basic results that we will need. We denote the action of an element $w\in W$ on a vector $\alpha\in V$ by $w\cdot\alpha$. For every $t\in T$, there is a hyperplane $H_t\subseteq V$ such that $t$ acts via the linear transformation that reflects through $H_t$. Furthermore, $w\cdot H_t=H_{t^w}$ for every $w\in W$ and $t\in T$. Define the \defn{moved space} of $w$ to be $\mov(w)=\im(w-1)$, the image of the linear transformation $w-1:V\to V$ ($1$ is the identity transformation). Brady and Watt \cite{BradyWatt} showed that if $t\in T$ and $w\in W$, then \begin{equation}\label{EqBradyWatt}
\mov(t)\subseteq\mov(w)\text{ if and only if }t\leq _T w.
\end{equation}

The ring $\mathrm{Sym}(V_\mathbb{C}^*)^W$ of $W$-invariant polynomials is a polynomial ring $\mathbb{C}[f_1,\ldots,f_n]$ over a set of invariant polynomials $f_i$ that may be chosen homogeneous and whose degrees $d_1 \leq \cdots \leq d_n$ are uniquely determined.  We write $h=d_n$ for the \defn{Coxeter number} of $W$.

A \defn{standard Coxeter element} of $W$ is an element obtained by multiplying the simple reflections together in some order.  A \defn{regular element} of $W$ is an element with an eigenvector that lies in the complement of the reflecting hyperplanes.  Following~\cite{reiner2017non}, we define a \defn{Coxeter element} of $W$ to be a regular element of order $h$.  For Weyl groups, Coxeter elements coincide with conjugates of standard Coxeter elements.  
More generally, a \defn{reflection automorphism} of $W$ is a group automorphism that preserves the set of reflections of $W$.  By~\cite[Proposition 1.4]{reiner2017non}, the reflection automorphisms of $W$ act transitively on its Coxeter elements---thus, all Coxeter elements have the same order $h$.

We assume throughout this article that we have fixed a Coxeter element $c$ of $W$. A \defn{noncrossing partition} is an element $v\in W$ such that $v\leq_T c$. The interval $[e,c]$ in the absolute order is called the \defn{noncrossing partition lattice} and denoted $\NC(W,c)$; it is indeed a lattice. This is the only type
of lattice we will consider in the rest of the article, so all meets (denoted by $\bigwedge$) and joins (denoted by $\bigvee$) will be with respect to this lattice. Furthermore, the definitions of the noncrossing projection $\pi_T$ and the Coxeter pop-tsack torsing operator $\pop_T$ from \eqref{EqNCProj} and \Cref{def:dual_pop_stack} are made with respect to this 
fixed Coxeter element. Since the reflection automorphisms of $W$ act transitively on the Coxeter elements, different Coxeter elements give rise to isomorphic noncrossing partition lattices.

\section{Dihedral Groups}
\label{sec:dihedral}

As the dihedral case will be useful when analyzing the general case, we immediately dispense with it in this short section.  The dihedral group $I_2(h)$ has $2h$ elements: the identity $e$, $h$ reflections, a Coxeter element $c$, and then the elements $c^{-1},c^{-2},\ldots,c^{-(h-1)}$.  The identity, reflections, and Coxeter element are all noncrossing partitions, and they all map to the identity under a single application of $\pop_T$.  For each $1\leq i\leq h-1$, the element $c^{-i}$ requires exactly $h-i$ iterations of $\pop_T$ to reach the identity.  The right-hand side of~\Cref{fig:arr_coxplane} illustrates the hyperplane arrangement for the dihedral group $I_2(4)$.

\begin{figure}[htbp]
  \begin{center}
      \includegraphics[width=.3\linewidth]{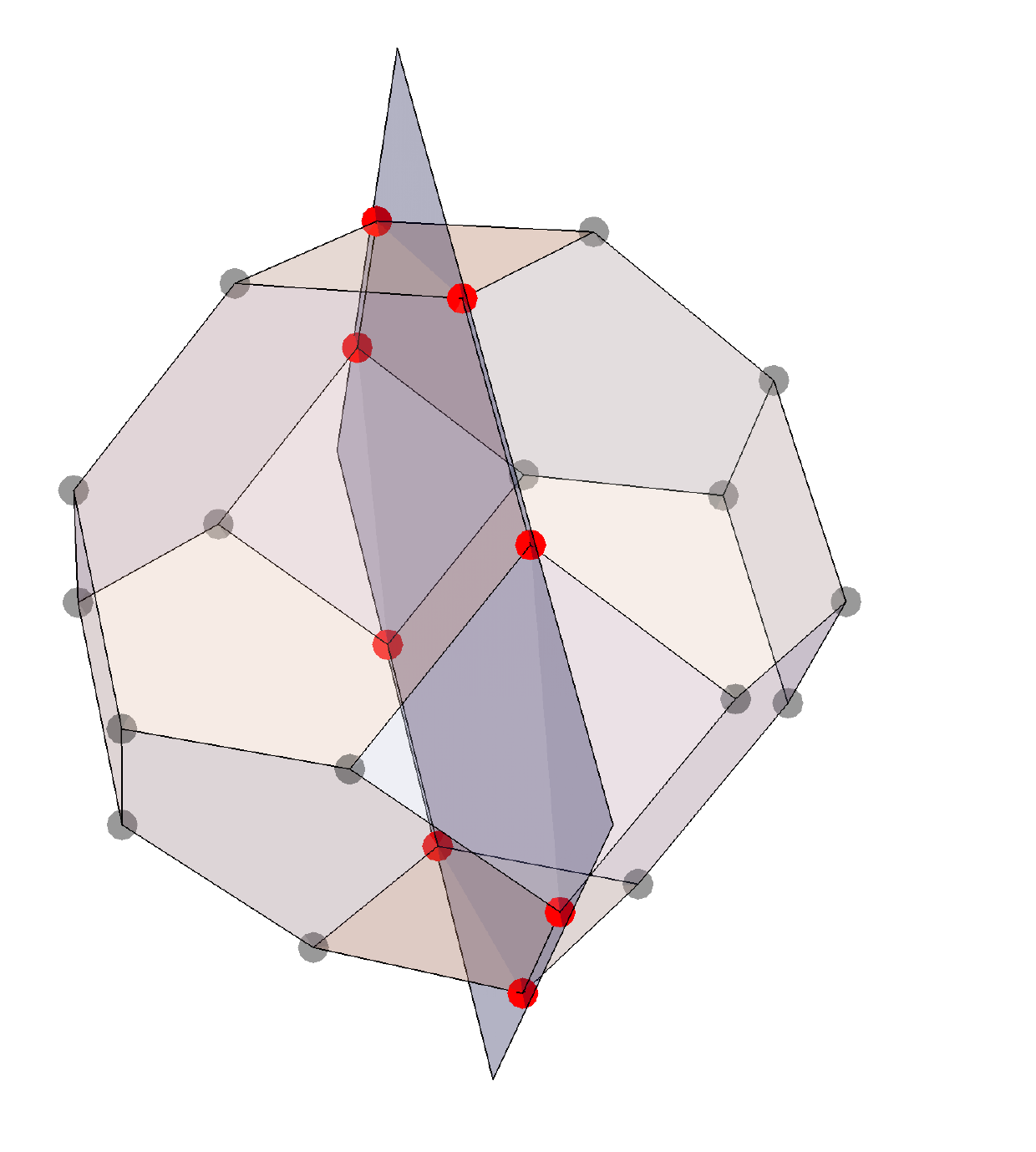} \hspace{2em}
      \begin{tikzpicture}[scale=1]
       \draw (-2,0) -- (2,0);
       \draw (0,-2) -- (0,2);
       \draw (-2,-2) -- (2,2);
       \draw (-2,2) -- (2,-2);
       \node at (1.5,.5) {$\color{red}{e}$};
       \node at (2.7,0) {$(12)(34)$};
       \node at (1.5,-.5) {$\color{red}{s}$};
       \node at (.5,1.5) {$\color{red}{t}$};
       \node at (2.3,2.3) {$(23)$};
       \node at (-.5,1.5) {$\color{red}{c}$};
       \node at (0,2.3) {$(13)(24)$};
       \node at (.5,-1.5) {$\color{red}{c^{3}}$};
       \node at (2.3,-2.3) {$(14)$};
       \node at (-.5,-1.5) {$\color{red}{sts}$};
       \node at (-1.5,.5) {$\color{red}{tst}$};
       \node at (-1.5,-.5) {$\color{red}{c^2}$};
      \end{tikzpicture}
    \end{center}
\caption{{\it Left:} the permutahedron for $\mathfrak{S}_4$ with the eight vertices on the Coxeter plane for bipartite $c$ labeled in red.  {\it Right:} the hyperplane arrangement for the corresponding dihedral group $I_2(4)$.}
\label{fig:arr_coxplane}
\end{figure}

\section{General Properties of $\pop_T$}
\label{sec:general_properties}

\subsection{Absolute monotonicity}

Given $x\in W$, let $W_x$ be the subgroup of $W$ generated by the reflections lying weakly below $x$ in the absolute order. The subgroups $W_x$ for $x\in \NC(W,c)$ are called the \defn{noncrossing parabolic subgroups} of $W$. As discussed in \cite{ReadingNoncrossing}, the noncrossing parabolic subgroups of $W$ form a lattice under the containment order, and the map $x\mapsto W_x$ gives an isomorphism from $\NC(W,c)$ to this lattice. 

\begin{lemma}
\label{lem:monotone}
For every $x \in W$, we have $\pi_T(\pop_T(x))\leq_T \pi_T(x)$.
\end{lemma}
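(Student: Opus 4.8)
Write $x' = \pop_T(x) = x\cdot\pi_T(x)^{-1}$ and abbreviate $p = \pi_T(x)$, $p' = \pi_T(x')$. We must show $p' \leq_T p$, i.e. that the join in $\NC(W,c)$ of the reflections lying weakly below $x'$ in absolute order is itself weakly below $p$. Since $\NC(W,c)$ is a lattice and $p$ is a noncrossing partition, it suffices to prove the single statement: \emph{every reflection $t$ with $t \leq_T x'$ satisfies $t \leq_T p$.} Indeed, once each such $t$ lies below $p$ in absolute order — hence is a reflection of the noncrossing parabolic subgroup $W_p$ — the join $p' = \bigvee_{t\leq_T x'} t$ is taken over reflections all lying in the interval $[e,p] \subseteq \NC(W,c)$, and so $p' \leq_T p$ because joins in $\NC(W,c)$ of elements below $p$ stay below $p$ (equivalently, via the lattice isomorphism $x\mapsto W_x$, the corresponding noncrossing parabolic subgroups are all contained in $W_p$, so their join is too).

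**The geometric core.** The key inputs are the Brady--Watt characterization \eqref{EqBradyWatt}: for $t\in T$ and $w\in W$, $t\leq_T w$ iff $\mov(t)\subseteq\mov(w)$; and the basic fact that if $v\leq_T w$ then $\mov(v)\subseteq\mov(w)$ and moreover $\ell_T$ is additive along $v\leq_T w$. First I would record that $\pi_T(x)\leq_T x$ — this should follow from general properties of the noncrossing projection (the join of reflections each weakly below $x$ in absolute order is again weakly below $x$, since $\NC(W,c)$-joins of things below $x$ remain below $x$; alternatively this is among the "structural properties" promised in \Cref{sec:general_properties}). Given $p = \pi_T(x) \leq_T x$, reflection-length additivity gives $\ell_T(x) = \ell_T(p) + \ell_T(xp^{-1}) = \ell_T(p) + \ell_T(x')$, and in fact $x' = xp^{-1}$ satisfies $x' \leq_T x$ with $\mov(x') \subseteq \mov(x)$. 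Now take any reflection $t \leq_T x'$. By Brady--Watt, $\mov(t) \subseteq \mov(x') \subseteq \mov(x)$, so $t \leq_T x$, i.e. $t$ is one of the reflections weakly below $x$ whose join defines $p$; hence $t \leq_T p$. This is exactly the claim needed, and the lemma follows.

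**The anticipated obstacle.** The delicate point is not the geometric inclusion $\mov(x') \subseteq \mov(x)$ — that is immediate once $x' \leq_T x$ — but rather cleanly justifying that $\pi_T(x) \leq_T x$, and that $x' = x\pi_T(x)^{-1} \leq_T x$. The subtlety is that $\pi_T(x)$ is defined as a join \emph{in $\NC(W,c)$}, not as an absolute-order join in all of $W$, so one must argue that this noncrossing join of reflections below $x$ does not "escape upward" past $x$ in absolute order. I expect this to come from the monotonicity of the noncrossing projection: if $v \leq_T x$ then $\pi_T(v,c) \leq_T \pi_T(x,c)$ is false in general, but the relevant statement — that for each reflection $t\leq_T x$ one has $t \leq_T \pi_T(x)$, by definition of the join, \emph{and} that $\pi_T(x) \leq_T x$ — should be established as a preliminary fact about $\pi_T$, perhaps using that $\NC(W_x \cap \NC(W,c))$ computations stay inside $[e,x]\cap\NC(W,c)$ when $x$ itself is a noncrossing partition, and reducing to that case. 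If $x$ is not itself a noncrossing partition one still needs $\pi_T(x)\le_T x$; I would handle this by noting $\pi_T(x) = \bigvee_{t\le_T x} t$ and that the absolute-order interval $[e,x]$ meets $\NC(W,c)$ in a join-subsemilattice containing all those $t$, forcing their $\NC(W,c)$-join to lie in $[e,x]$. Once that preliminary is in hand, the rest of the argument above is routine.
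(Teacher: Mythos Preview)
Your argument has a genuine gap: the claim $\pi_T(x)\leq_T x$ is \emph{false} in general. Take $W=\mathfrak S_4$ with $c=(1234)$ and $x=(13)(24)$. The reflections below $x$ in absolute order are $(13)$ and $(24)$; both are noncrossing, but as noncrossing set partitions $\{1,3\}\{2\}\{4\}$ and $\{1\}\{2,4\}\{3\}$ they cross, so their join in $\NC(\mathfrak S_4,c)$ is the top element $c=(1234)$. Thus $\pi_T(x)=c$ has $\ell_T(c)=3>2=\ell_T(x)$, so $\pi_T(x)\not\leq_T x$. Consequently your chain $\mov(t)\subseteq\mov(x')\subseteq\mov(x)$ breaks down (indeed here $x'=\pop_T(x)=(13)(24)(4321)$ need not satisfy $x'\leq_T x$), and your proposed justification that $[e,x]\cap\NC(W,c)$ is a join-subsemilattice is exactly what fails in this example.

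The paper's proof sidesteps this entirely by working with the noncrossing parabolic subgroup $W_{\pi_T(x)}$ rather than with absolute-order intervals. The key observation is that $x$ itself lies in $W_{\pi_T(x)}$: any reduced $T$-word for $x$ uses only reflections lying below $x$, each of which lies below $\pi_T(x)$ and hence in $W_{\pi_T(x)}$. Since $\pi_T(x)\in W_{\pi_T(x)}$ as well, the product $\pop_T(x)=x\pi_T(x)^{-1}$ lies in $W_{\pi_T(x)}$, so every reflection below $\pop_T(x)$ is a reflection of $W_{\pi_T(x)}$, giving $W_{\pi_T(\pop_T(x))}\subseteq W_{\pi_T(x)}$ and hence the lemma via the lattice isomorphism $y\mapsto W_y$. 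Note that this argument never asserts any absolute-order relation between $\pi_T(x)$ and $x$; it only uses the much weaker containment $x\in W_{\pi_T(x)}$.
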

\begin{proof}
The parabolic subgroup generated by the reflections lying weakly below $\pi_T(x)$ in absolute order is $W_{\pi_T(x)}$. All of the reflections lying weakly below $x$ also lie weakly below $\pi_T(x)$, so $x$ is an element of $W_{\pi_T(x)}$. Of course, $\pi_T(x)$ also belongs to $W_{\pi_T(x)}$. This means that $\pop_T(x)=x\pi_T(x)^{-1}$ is in $W_{\pi_T(x)}$, so all of the reflections weakly below $\pop_T(x)$ are in $W_{\pi_T(x)}$. It follows that 
$W_{\pi_T(\pop_T(x))}$ is contained in $W_{\pi_T(x)}$, and this proves the result. 
\end{proof}

The following proposition gives a condition for elements to have an ``isolated'' forward orbit under $\pop_T$. It states that the only possible preimage under $\pop_T$ of an element $w \in W$ with $\pi_T(w)=c$ is $wc$.

\begin{proposition}
\label{prop:preimages}
If $x,w\in W$ are such that $\pop_T(x)=w$ and $\pi_T(w)=c$, then $x=wc$.
\end{proposition}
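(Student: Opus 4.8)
The plan is to show that if $\pop_T(x)=w$ with $\pi_T(w)=c$, then $\pi_T(x)$ must equal $c$, whence $x=w\pi_T(x)=wc$. Write $p=\pi_T(x)$, so that $x=wp$. We know $w=xp^{-1}=\pop_T(x)$, and by \Cref{lem:monotone} we have $\pi_T(w)=\pi_T(\pop_T(x))\leq_T\pi_T(x)=p$. Since $\pi_T(w)=c$ by hypothesis, this gives $c\leq_T p$. But $p=\pi_T(x)\in\NC(W,c)$, so $p\leq_T c$ as well; since $c$ is the maximum of $\NC(W,c)$, we conclude $p=c$. Therefore $x=wp=wc$, as claimed.

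The key steps, in order, are: (1) set $p=\pi_T(x)$ and record the tautology $x=wp$ coming from the definition $\pop_T(x)=x\pi_T(x)^{-1}=w$; (2) apply \Cref{lem:monotone} to get $\pi_T(w)\leq_T p$; (3) substitute $\pi_T(w)=c$ and use that $c$ is the top element of the lattice $\NC(W,c)$, together with $p\in\NC(W,c)$, to force $p=c$; (4) conclude $x=wc$.

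I expect there to be essentially no obstacle here — the statement is an immediate consequence of \Cref{lem:monotone} once one observes that $c$ is the maximum of the noncrossing partition lattice, so the only refinement step that requires any care is making sure the inequality from \Cref{lem:monotone} is oriented correctly (it says $\pi_T$ of the image is $\leq_T$ $\pi_T$ of the original element, which is exactly what pins $p$ from below). The one thing worth double-checking is that $\pop_T(x)=w$ really does give $x=wp$ and not $x=pw$: from $\pop_T(x)=x\cdot\pi_T(x)^{-1}=w$ we get $x=w\cdot\pi_T(x)=wp$, so the order is as written.
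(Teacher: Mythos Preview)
Your proof is correct and follows essentially the same argument as the paper: apply \Cref{lem:monotone} to obtain $c=\pi_T(w)\leq_T\pi_T(x)$, use that $\pi_T(x)\in\NC(W,c)$ to force $\pi_T(x)=c$, and then read off $x=\pop_T(x)\pi_T(x)=wc$. The paper's version is just a terser rendering of the same steps.
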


\begin{proof}
Suppose $x$ is such that $\pop_T(x)=w$.  By \Cref{lem:monotone}, $c\leq_T\pi_T(x)$, so we must have $\pi_T(x)=c$. However, this implies that $x=\pop_T(x)\pi_T(x)=wc$.  
\end{proof}

\subsection{Equivariance under reflection automorphisms}

In this section, we make a point of decorating $\pi_T$ and $\pop_T$ by the Coxeter element $c$ in order to differentiate different choices of Coxeter elements.  We show a sort of equivariance of $\pop_T$ with respect to reflection automorphisms, which establishes that the orbit structure of $\pop_T$ does not depend on the choice of Coxeter element $c$.  Thus, the Coxeter pop-tsack torsing operators defined with respect to different Coxeter elements are dynamically equivalent to each other, so none of our results about $\pop_T$ depend on the choice of $c$. This means that we have the freedom to choose any Coxeter element we want when we work with specific Coxeter groups, and it will often be helpful to use a specific choice that makes the resulting combinatorics easiest to describe.

\begin{proposition}  Let $\phi$ be a reflection automorphism of $W$.
For $w \in W$, we have
\begin{itemize}
\item $\phi(\pi_T(w,\phi^{-1}(c))) = \pi_T(\phi(w),c)$, and
\item $\phi(\pop_T(w,\phi^{-1}(c))) = \pop_T(\phi(w),c)$.
\end{itemize}
\label{prop:pop_props}
\end{proposition}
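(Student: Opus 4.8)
The plan is to show that a reflection automorphism $\phi$ intertwines all of the structure used to define $\pi_T$ and $\pop_T$, and then to unwind the definitions. The first observation is that $\phi$, being a reflection automorphism, permutes the reflection set $T$ and hence preserves reflection length: $\ell_T(\phi(w)) = \ell_T(w)$ for all $w \in W$. Consequently $\phi$ is an automorphism of the absolute order, i.e. $v \leq_T w$ if and only if $\phi(v) \leq_T \phi(w)$. Applying this with $w$ replaced by $c$ shows that $\phi$ restricts to an order isomorphism from $\NC(W,\phi^{-1}(c)) = [e,\phi^{-1}(c)]$ onto $\NC(W,c) = [e,c]$; since an order isomorphism between lattices automatically preserves joins, we get $\phi\!\left(\bigvee^{\NC(W,\phi^{-1}(c))} A\right) = \bigvee^{\NC(W,c)} \phi(A)$ for any subset $A$ of the first lattice.

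With this in hand, the first bullet is a direct computation. By definition $\pi_T(w,\phi^{-1}(c)) = \bigvee_{t \leq_T w}^{\NC(W,\phi^{-1}(c))} t$, where the join is over reflections $t$ with $t \leq_T w$. Applying $\phi$ and using the join-preserving property just noted, $\phi(\pi_T(w,\phi^{-1}(c))) = \bigvee_{t \leq_T w}^{\NC(W,c)} \phi(t)$. Now reindex the join: as $t$ ranges over reflections below $w$, the element $\phi(t)$ ranges over reflections below $\phi(w)$ (using that $\phi$ bijects $T$ with $T$ and preserves $\leq_T$), so the right-hand side equals $\bigvee_{t' \leq_T \phi(w)}^{\NC(W,c)} t' = \pi_T(\phi(w),c)$. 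This proves the first item.

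The second bullet follows from the first together with the fact that $\phi$ is a group homomorphism. We have $\pop_T(w,\phi^{-1}(c)) = w \cdot \pi_T(w,\phi^{-1}(c))^{-1}$, so
\[
\phi(\pop_T(w,\phi^{-1}(c))) = \phi(w) \cdot \phi\!\left(\pi_T(w,\phi^{-1}(c))\right)^{-1} = \phi(w) \cdot \pi_T(\phi(w),c)^{-1} = \pop_T(\phi(w),c),
\]
where the middle equality is the first bullet.

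The only genuine point requiring care — and the step I would flag as the main obstacle, modest as it is — is the claim that $\phi$ carries the join in $\NC(W,\phi^{-1}(c))$ to the join in $\NC(W,c)$. This rests on two facts: that $\phi$ preserves reflection length (immediate from $\phi(T) = T$, since reflection length is the word length with respect to the generating set $T$), and that an order isomorphism of lattices preserves joins (because a join is characterized purely order-theoretically as a least upper bound). One should also confirm that $\NC(W,\phi^{-1}(c))$ is genuinely a lattice so that these joins exist — but this is standard, and is in fact already invoked in the Preliminaries, where it is noted that different Coxeter elements yield isomorphic noncrossing partition lattices. Everything else is bookkeeping with the definitions.
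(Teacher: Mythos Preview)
Your proof is correct and follows essentially the same route as the paper's: both argue that $\phi$ preserves the absolute order (hence induces a lattice isomorphism $\NC(W,\phi^{-1}(c)) \to \NC(W,c)$ carrying joins to joins), reindex the defining join for $\pi_T$, and then deduce the $\pop_T$ statement from the $\pi_T$ statement via the homomorphism property. You are slightly more explicit than the paper in justifying why $\phi$ preserves $\leq_T$ (via preservation of $\ell_T$), but otherwise the arguments are the same.
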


\begin{proof}
For the first point, observe that the map $w \mapsto \phi(w)$ is a lattice isomorphism from $\NC(W,c)$ to $\NC(W,\phi(c))$.  Furthermore, for each reflection $t$, we have $t \leq_T w$ if and only if $\phi(t) \leq_T \phi(w)$.  Therefore, \[\phi\left(\pi_T(w,\phi^{-1}(c))\right) = \phi\left(\bigvee_{t \leq_T w}^{\NC(W,\phi^{-1}(c))} t\right) = \bigvee_{t \leq_T w}^{\NC(W,c)} \phi(t) = \bigvee_{t \leq_T \phi(w)}^{\NC(W,c)} t = \pi_T(\phi(w),c).\]  

For the second point, we use the first point to check that 
\begin{align*}\pop_T(\phi(w),c) &= \phi(w) \pi_T(\phi(w),c)^{-1} = \phi(w) \left(\phi\left(\pi_T(w,\phi^{-1}(c))\right)\right)^{-1}\\ & = \phi\left(w\pi_T(w,\phi^{-1}( c))^{-1}\right) = \phi\left(\pop_T(w,\phi^{-1}(c))\right) .\qedhere\end{align*}
\end{proof}

As conjugation by $c$ is a special kind of reflection automorphism that restricts to a lattice isomorphism of $\NC(W,c)$, we can specialize the preceding result to conclude the following.

\begin{corollary} For $w \in W$, we have
\begin{itemize}
    \item  $\pi_T(w,c)^c = \pi_T(w^c,c)$, and
    \item $\pop_T(w,c)^c = \pop_T(w^c,c)$. 
\end{itemize}
\end{corollary}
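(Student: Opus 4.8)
The plan is to obtain this corollary as an immediate specialization of \Cref{prop:pop_props}. First I would take $\phi$ to be the inner automorphism given by conjugation by $c$, i.e.\ $\phi(x)=x^c=cxc^{-1}$. Since the reflection set $T$ is closed under conjugation by arbitrary elements of $W$, this $\phi$ maps reflections to reflections and is therefore a reflection automorphism, so it meets the hypothesis of \Cref{prop:pop_props}. I would also recall here why $\phi$ restricts to a lattice automorphism of $\NC(W,c)=[e,c]$: conjugation preserves reflection length $\ell_T$, hence preserves the absolute order, and $\phi(c)=c^c=c$, so $\phi$ carries $[e,c]$ to $[e,c^c]=[e,c]$ bijectively and order-isomorphically.

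Next I would compute $\phi^{-1}(c)$. Since $\phi^{-1}(x)=c^{-1}xc$, we get $\phi^{-1}(c)=c^{-1}cc=c$; that is, the Coxeter element is fixed by conjugation by itself. Substituting $\phi^{-1}(c)=c$ and $\phi(w)=w^c$ into the two identities of \Cref{prop:pop_props} then yields exactly the two claimed equalities: from the first bullet, $\pi_T(w,c)^c=\phi\big(\pi_T(w,\phi^{-1}(c))\big)=\pi_T(\phi(w),c)=\pi_T(w^c,c)$; and from the second bullet, $\pop_T(w,c)^c=\phi\big(\pop_T(w,\phi^{-1}(c))\big)=\pop_T(\phi(w),c)=\pop_T(w^c,c)$.

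There is essentially no obstacle here; the whole content is the observation that $c$ is fixed by conjugation by $c$, so that the two occurrences of the Coxeter element in \Cref{prop:pop_props} collapse to a single $c$. The only step that needs a word of justification is verifying that $\phi$ qualifies as a reflection automorphism and restricts to $\NC(W,c)$, both of which follow from the conjugation-invariance of $T$ and of $\ell_T$, as already noted in the paragraph preceding the statement. After that, the corollary is a one-line substitution into the proposition.
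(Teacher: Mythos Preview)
Your proposal is correct and matches the paper's approach exactly: the paper simply remarks that conjugation by $c$ is a reflection automorphism fixing $c$, so the corollary is a direct specialization of \Cref{prop:pop_props}. Your write-up just makes explicit the substitution $\phi^{-1}(c)=c$ that the paper leaves to the reader.
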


Since reflection automorphisms act transitively on the set of Coxeter elements, we further obtain:
\begin{corollary}
\label{cor:orbit_structure}
The orbit structure of $\pop_T$ does not depend on the choice of Coxeter element.
\end{corollary}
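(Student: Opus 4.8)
The plan is to derive \Cref{cor:orbit_structure} directly from \Cref{prop:pop_props} together with the transitivity of reflection automorphisms on Coxeter elements recorded in the preliminaries (\cite[Proposition 1.4]{reiner2017non}). Fix two Coxeter elements $c$ and $c'$ of $W$. First I would produce a reflection automorphism $\phi$ of $W$ with $\phi(c)=c'$, equivalently $c=\phi^{-1}(c')$; this is exactly what transitivity gives.

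Next I would apply the second bullet of \Cref{prop:pop_props} to this $\phi$, with $c'$ playing the role of the fixed Coxeter element there. This yields, for every $w\in W$,
\[
\phi\big(\pop_T(w,c)\big)=\phi\big(\pop_T(w,\phi^{-1}(c'))\big)=\pop_T\big(\phi(w),c'\big).
\]
In other words, the bijection $\phi\colon W\to W$ intertwines the two maps: $\phi\circ\pop_T(\cdot,c)=\pop_T(\cdot,c')\circ\phi$. Thus $\phi$ is a conjugacy between the dynamical systems $(W,\pop_T(\cdot,c))$ and $(W,\pop_T(\cdot,c'))$.

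Finally I would observe that a conjugacy by a bijection carries forward orbits to forward orbits bijectively — iterating the intertwining relation gives $\phi\big(O_{\pop_T(\cdot,c)}(w)\big)=O_{\pop_T(\cdot,c')}(\phi(w))$ — and likewise sends periodic points to periodic points with the same periods. Hence the multiset of forward-orbit sizes, the set of periodic points, and the full ``sorting tree'' rooted at $e$ are identical for $c$ and $c'$. Since $c,c'$ were arbitrary, the orbit structure of $\pop_T$ is independent of the choice of Coxeter element. There is no real obstacle here: the only point requiring care is the bookkeeping of which Coxeter element occupies which slot when invoking \Cref{prop:pop_props}, but once $\phi(c)=c'$ is fixed this is forced, and the rest is formal.
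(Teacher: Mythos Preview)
Your argument is correct and is exactly the approach the paper takes: the paper states the corollary as an immediate consequence of the transitivity of reflection automorphisms on Coxeter elements together with \Cref{prop:pop_props}, and you have simply written out the intertwining $\phi\circ\pop_T(\cdot,c)=\pop_T(\cdot,c')\circ\phi$ explicitly.
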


\subsection{Folding and the Coxeter plane}
\label{sec:folding}

\begin{definition}

Let $(W,S)$ and $(\U,S')$ be Coxeter systems such that $W$ and $W'$ are finite and irreducible. Suppose there is a surjective map $\fold:S\to S'$ such that for each $s'\in S'$, the elements of $\fold^{-1}(s')$ all commute with each other. If there is an injective homomorphism $\unfold:\U\to W$ with $\unfold(s')=\prod_{s\in\fold^{-1}(s')}s$ for all $s'\in S'$, then we say that $W$ \defn{folds} to $W'$ and that $W'$ \defn{unfolds} into $W$. 
\end{definition}

\Cref{fig:folding} illustrates several examples of folding. If $W$ folds to $\U$ and $c'$ is a Coxeter element of $\U$, then $\unfold(c')$ is a Coxeter element of $W$. Hence, the Coxeter number is preserved by folding.

A useful example of folding goes through the Coxeter plane.  Any finite irreducible Coxeter group $W$ with Coxeter number $h$ folds to the dihedral group $I_2(h)$ of order $2h$, as we now explain.  Since the Coxeter-Dynkin diagram of $W$ is a tree (hence, a bipartite graph), the set $S$ of simple reflections of $W$ naturally separates into two sets $S_+$ and $S_-$, where the simple reflections within each set all commute with each other.  Writing $c_\pm$ for the product of the reflections in $S_\pm$ so that $c = c_+ c_-$ is a Coxeter element of $W$, $\langle c_+,c_-\rangle$ is the dihedral group $I_2(h)$.  Each of $c_+$ and $c_-$ acts as a reflection on the \defn{Coxeter plane}, a plane spanned by the real and imaginary parts of an eigenvector for $c$ with eigenvalue $e^{2\pi i/h}$.  \Cref{fig:arr_coxplane} illustrates this for the symmetric group $\mathfrak{S}_4$ with Coxeter plane the dihedral group $I_2(4)$.

\begin{figure}[htbp]
    \centering
    \bgroup
\def\arraystretch{4}
\begin{tabular}{ccc|c}
     $W$ & $\U$ & $h$ & Diagrammatic folding  \\ \hline
     $A_{2n-1}$ & $B_n$ & $2n$ & \raisebox{-0.5\height}{\begin{tikzpicture}[scale=.5]
       \filldraw[black] (0,.5) circle (2pt) -- (1,0) circle (2pt) -- (2,0) circle (2pt);
        \filldraw[black,dotted] (2,0) circle (2pt) -- (3,0) circle (2pt);
        \filldraw[black] (3,0) circle (2pt) -- (4,0) circle (2pt);
        \filldraw[black] (0,.5) circle (2pt) -- (1,1) circle (2pt) -- (2,1) circle (2pt);
        \filldraw[black,dotted] (2,1) circle (2pt) -- (3,1) circle (2pt);
        \filldraw[black] (3,1) circle (2pt) -- (4,1) circle (2pt);
        \filldraw[black,double] (0,-.5) -- (1,-.5);
        \filldraw[black] (0,-.5) circle (2pt);
        \filldraw[black] (1,-.5) circle (2pt);
        \filldraw[black] (1,-.5) circle (2pt) -- (2,-.5) circle (2pt);
        \filldraw[black,dotted] (2,-.5) circle (2pt) -- (3,-.5) circle (2pt);
        \filldraw[black] (3,-.5) circle (2pt) -- (4,-.5) circle (2pt);
      \end{tikzpicture}}\\ 
     $D_n$ & $B_{n-1}$ & $2n-2$ & \raisebox{-0.5\height}{\begin{tikzpicture}[scale=.5]
     \filldraw[black] (0,1) circle (2pt) -- (1,.5) circle (2pt);
       \filldraw[black] (0,0) circle (2pt) -- (1,.5) circle (2pt) -- (2,.5) circle (2pt);
        \filldraw[black,dotted] (2,.5) circle (2pt) -- (3,.5) circle (2pt);
        \filldraw[black] (3,.5) circle (2pt) -- (4,.5) circle (2pt);
        \filldraw[black,double] (0,-.5) -- (1,-.5);
        \filldraw[black] (0,-.5) circle (2pt);
        \filldraw[black] (1,-.5) circle (2pt);
        \filldraw[black] (1,-.5) circle (2pt) -- (2,-.5) circle (2pt);
        \filldraw[black,dotted] (2,-.5) circle (2pt) -- (3,-.5) circle (2pt);
        \filldraw[black] (3,-.5) circle (2pt) -- (4,-.5) circle (2pt);
      \end{tikzpicture}} \\ 
     $E_6$ & $F_4$ & $12$ & \raisebox{-0.5\height}{\begin{tikzpicture}[scale=.5]
       \filldraw[black] (-1,.5) circle(2pt) -- (0,.5) circle (2pt) -- (1,0) circle (2pt) -- (2,0) circle (2pt);
        \filldraw[black] (0,.5) circle (2pt) -- (1,1) circle (2pt) -- (2,1) circle (2pt);
        \filldraw[black,double] (0,-.5) -- (1,-.5);
        \filldraw[black] (-1,-.5) circle (2pt) -- (0,-.5) circle (2pt);
        \filldraw[black] (0,-.5) circle (2pt);
        \filldraw[black] (1,-.5) circle (2pt);
        \filldraw[black] (1,-.5) circle (2pt) -- (2,-.5) circle (2pt);
      \end{tikzpicture}}\\ 
     $E_8$ & $H_4$ & 30 & \raisebox{-0.5\height}{\begin{tikzpicture}[scale=.5]
       \filldraw[black] (0,1.5) circle(2pt) -- (1,1) circle (2pt) -- (2,1) circle (2pt) -- (3,1) circle (2pt);
        \filldraw[black] (0,.5) circle (2pt) -- (1,0) circle (2pt) -- (2,0) circle (2pt) -- (3,0) circle (2pt);
        \filldraw[black] (0,.5) -- (1,1);
        \filldraw[black] (0,-.5) circle (2pt) -- (1,-.5) circle (2pt) -- (2,-.5) circle (2pt) -- (3,-.5) circle (2pt);
        \node at (.5,-1) {\scriptsize 5};
      \end{tikzpicture}}\\
\end{tabular}
\egroup
    \caption{Examples of folding of finite Coxeter groups.}
    \label{fig:folding}
\end{figure}

In this subsection, we write $T$ and $T'$ for the sets of reflections in $W$ and $W'$, respectively. 

\begin{lemma}
Let $\U$ be a finite irreducible Coxeter group that unfolds into $W$. Let $c'$ be a bipartite Coxeter element of $\U$, and let $c=\unfold(c')$ be the corresponding Coxeter element of $W$. Then $\unfold(\NC(\U),c')$ is a sublattice of $\NC(W,c)$.
\label{lem:lattice_embedding}
\end{lemma}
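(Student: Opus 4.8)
The plan is to show that $\unfold(\NC(\U,c'))$ is closed under the meet and join operations of $\NC(W,c)$ (equivalently, that it is closed under taking meets, since a finite meet-subsemilattice containing the top element $c$ is automatically a sublattice). I would first recall the geometric characterization of noncrossing partitions in terms of moved spaces. By the Brady--Watt result \eqref{EqBradyWatt}, for $v, w \in \NC(W,c)$ we have $v \leq_T w$ iff $\mov(v) \subseteq \mov(w)$, and in fact for noncrossing partitions the lattice operations have a clean geometric description: $\mov(v \wedge w)$ and $\mov(v \vee w)$ can be read off from $\mov(v) \cap \mov(w)$ and $\mov(v) + \mov(w)$ respectively (via Brady--Watt, the meet of two noncrossing partitions is the unique noncrossing partition whose moved space is $\mov(v)\cap\mov(w)$, and dually for the join using the "complement" involution $v \mapsto v^{-1}c$). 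The key point is that $\unfold$ is a reflection-length-preserving injective homomorphism that sends $\NC(\U,c')$ into $\NC(W,c)$ (this last containment follows since $\unfold$ takes a reduced $T'$-word for $c'$ to a $T$-word for $c$, and one checks it is reduced).

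The core step is then to control how $\unfold$ interacts with moved spaces. Concretely, I would identify the geometric representation $V'$ of $\U$ with a subspace of the geometric representation $V$ of $W$ on which $\unfold(\U)$ acts (with the orthogonal complement fixed pointwise), so that for $w' \in \U$ we have $\mov(\unfold(w')) = \mov_{V'}(w')$ sitting inside $V$. Granting this identification, for $v', w' \in \NC(\U,c')$, the moved space $\mov(\unfold(v')) \cap \mov(\unfold(w'))$ equals $\mov_{V'}(v') \cap \mov_{V'}(w') = \mov_{V'}(v' \wedge' w')$, where $\wedge'$ is the meet in $\NC(\U,c')$; hence $\mov(\unfold(v')) \cap \mov(\unfold(w')) = \mov(\unfold(v' \wedge' w'))$. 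Since $\unfold(v' \wedge' w')$ is itself a noncrossing partition of $W$ (it lies in $\NC(\U,c')$, which maps into $\NC(W,c)$), and noncrossing partitions are determined by their moved spaces, the uniqueness clause in the Brady--Watt description of meets in $\NC(W,c)$ forces $\unfold(v') \wedge \unfold(w') = \unfold(v' \wedge' w')$. Thus $\unfold(\NC(\U,c'))$ is closed under meets; since it contains $\unfold(c') = c$, the top element of $\NC(W,c)$, it is a sublattice, and the induced lattice structure agrees with that of $\NC(\U,c')$.

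I would also record the auxiliary facts needed along the way: that $\unfold$ preserves reflection length (so it is an order embedding for $\leq_T$), which follows from it being a homomorphism together with the fact that $\ell_T(\unfold(w'))$ can only drop if some $T'$-word becomes non-reduced after applying $\unfold$ --- ruled out by comparing with the moved-space dimension in $V'$; and that $\unfold(c')$ is genuinely a Coxeter element of $W$ of the same order $h$, which is noted in the discussion of folding just before the lemma.

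The main obstacle I anticipate is the geometric bookkeeping in the identification of $V'$ with a $\unfold(\U)$-stable subspace of $V$: one must verify that the simple reflections $\unfold(s') = \prod_{s \in \fold^{-1}(s')} s$ (a product of commuting reflections in $W$) act on the appropriate subspace of $V$ exactly as the simple reflection $s'$ acts on $V'$, i.e. that the folded Coxeter diagram data is compatible with the geometric representations, and that the moved space of a product of commuting reflections behaves as expected. This is essentially standard folding theory, but it is where the real content lies; everything after that is a formal consequence of the Brady--Watt picture of $\NC(W,c)$ as a lattice of subspaces.
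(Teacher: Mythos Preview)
There are two genuine gaps.

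First, the parenthetical claim that a finite meet-subsemilattice containing the top element is automatically a sublattice is false: in the Boolean lattice on $\{1,2,3\}$, the set $\{\emptyset,\{1\},\{2\},\{1,2,3\}\}$ is meet-closed and contains the top, but $\{1\}\vee\{2\}=\{1,2\}$ is not in it. So proving meet-closure alone does not finish the argument; joins must be handled separately.

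Second, and more seriously, the geometric identification you propose cannot hold. You assert that $V'$ embeds in $V$ with $\unfold(\U)$ fixing the orthogonal complement pointwise, so that $\mov_V(\unfold(w'))=\mov_{V'}(w')$. But $\unfold(s')=\prod_{s\in\fold^{-1}(s')}s$ is a product of $|\fold^{-1}(s')|$ commuting reflections of $W$, hence has moved space of dimension $|\fold^{-1}(s')|$ in $V$, not $1$. Concretely, in the folding $A_3\to B_2$ the element $\unfold(s_1')=s_1s_3$ acts as $-1$ on the line spanned by $\alpha_1-\alpha_3$, so it does \emph{not} fix the complement of $V'\cong V^\sigma$ pointwise; and $\dim\mov_V(\unfold(c'))=|S|>|S'|=\dim\mov_{V'}(c')$. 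Thus the equality of moved spaces fails already for simple reflections, and the Brady--Watt meet description does not transfer as you describe. This is not a matter of bookkeeping: the claimed relation is simply false, and repairing it would require a genuinely different link between moved spaces in $V$ and in $V'$.

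The paper avoids both problems. For meets it argues combinatorially rather than geometrically: using the characterization $w=\bigvee_{t\leq_T w}t$ of a noncrossing partition by the reflections below it, one checks that for $r\in\fold^{-1}(t')$ one has $r\leq_T\unfold(w')$ in $W$ if and only if $t'\leq_{T'}w'$ in $\U$, and this forces $\unfold(v')\wedge\unfold(w')=\unfold(v'\wedge' w')$. For joins, rather than any general lattice fact, the paper invokes the Kreweras complement $K(w)=cw^{-1}$: this is a lattice anti-automorphism of $\NC(W,c)$ satisfying $K\circ\unfold=\unfold\circ K'$ (since $\unfold$ is a homomorphism with $\unfold(c')=c$), so $K$ preserves the image of $\unfold$, and meet-closure becomes join-closure. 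You allude to this complement in passing; making it the actual mechanism for joins, together with a non-geometric argument for meets, would put your proof on solid ground.
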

\begin{proof}
Suppose that $w'\in W$ and $w' \leq_{T'} c'$ so that $\uu$ can be found as a prefix of $c'$. Then $\unfold(\uu)$ can be found as a prefix of $c$, so $\unfold(\uu) \leq_T c$.  By the same reasoning (replacing now $c'$ by any element of $\NC(\U,c')$), we have that $\unfold(\NC(\U,c'))$ is a subposet of $\NC(W,c)$.

We now check that $\unfold(\NC(\U,c'))$ is a sublattice of $\NC(W,c)$ (that is, that $\unfold(\NC(\U,c'))$ is closed under the meet and join operations of $\NC(W,c)$).  We know by~\cite{brady2008non} that a noncrossing partition $w$ is characterized by the set of reflections that lie below it: $w = \bigvee_{t \leq_T w} t$. Since $w \wedge v = \bigvee_{t : t \leq_T w \text{ and } t \leq_T v} t$, it follows that $\unfold(\NC(\U))$ is a meet-sublattice. Indeed, for fixed $t'\in T'$ and $r \in \fold^{-1}(t')$, we have $r \leq \unfold(w') \wedge \unfold(v')$ if and only if $r \leq_T \unfold(w')$ and $r \leq_T \unfold(v')$. This occurs if and only if $t' \leq_{T'} w'$ and $t' \leq_{T'} v'$, which occurs if and only if $t' \leq_{T'} w' \wedge v'$, which occurs if and only if $r \leq_T \unfold(w' \wedge v')$. 

The Kreweras complement maps $K:\NC(W,c)\to \NC(W,c)$ and $K':\NC(W',c')\to \NC(W',c')$, defined by $K(w)= cw^{-1}$ and $K'(w')=c'(w')^{-1}$, are lattice anti-isomorphisms \cite{armstrong2009generalized}. We have $K\circ\unfold=\unfold\circ K'$ because $\unfold$ is a homomorphism that sends $c'$ to $c$. This shows that $K$ preserves the set $\unfold(\NC(\U,c'))$, so $\unfold(\NC(\U,c'))$ is also a join-sublattice of $\NC(W,c)$.
\end{proof}

\begin{theorem}
\label{thm:pop_embedding}
If a finite irreducible Coxeter group $W'$ unfolds into $W$, then for $w' \in W'$,
\[\unfold(\pop_{T'}^k(w')) = \pop_T^k(\unfold(w')).\]
\end{theorem}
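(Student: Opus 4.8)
The plan is to reduce the claim to the single-step identity $\unfold(\pop_{T'}(w')) = \pop_T(\unfold(w'))$ and then iterate. To iterate, we need to know that $\pop_T$ maps the image $\unfold(W')$ into itself; this is automatic once we verify that $\unfold(\pi_{T'}(w',c'))=\pi_T(\unfold(w'),c)$, since then $\pop_T(\unfold(w')) = \unfold(w')\,\pi_T(\unfold(w'),c)^{-1} = \unfold\big(w'\pi_{T'}(w',c')^{-1}\big) = \unfold(\pop_{T'}(w'))$ lies in $\unfold(W')$, and we may apply the single-step identity again to $\pop_{T'}(w')\in W'$. A straightforward induction on $k$ then gives $\unfold(\pop_{T'}^k(w'))=\pop_T^k(\unfold(w'))$ for all $k\geq 0$.

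So everything comes down to the key lemma: $\unfold(\pi_{T'}(w',c')) = \pi_T(\unfold(w'),c)$, where $c'$ is a bipartite Coxeter element of $W'$ and $c=\unfold(c')$. First I would unwind both sides as noncrossing joins. By definition $\pi_{T'}(w',c') = \bigvee_{t'\leq_{T'} w'}^{\NC(W',c')} t'$ and $\pi_T(\unfold(w'),c) = \bigvee_{t\leq_T \unfold(w')}^{\NC(W,c)} t$. By \Cref{lem:lattice_embedding}, $\unfold$ restricts to a lattice embedding $\NC(W',c')\hookrightarrow\NC(W,c)$, so $\unfold$ commutes with joins taken inside $\NC(W',c')$; hence $\unfold(\pi_{T'}(w',c')) = \bigvee_{t'\leq_{T'} w'}^{\NC(W,c)} \unfold(t')$, the join now computed in $\NC(W,c)$. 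It therefore suffices to show that these two joins in $\NC(W,c)$ agree, i.e., that the set $\{\unfold(t') : t'\leq_{T'} w'\}$ and the set $\{t\in T : t\leq_T \unfold(w')\}$ have the same join in $\NC(W,c)$.

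The content is then a comparison of ``reflections below $w'$'' in $W'$ with ``reflections below $\unfold(w')$'' in $W$. I would argue: for $t'\in T'$, $t'\leq_{T'} w'$ iff every $r\in\fold^{-1}(t')$ satisfies $r\leq_T\unfold(w')$ — this is exactly the type of equivalence established in the proof of \Cref{lem:lattice_embedding} (using that $\unfold(t')=\prod_{r\in\fold^{-1}(t')}r$ is a noncrossing partition characterized by the reflections below it, namely the commuting reflections $\fold^{-1}(t')$, and \eqref{EqBradyWatt} via moved spaces). Consequently $\unfold(t')\leq_T\unfold(w')$ whenever $t'\leq_{T'}w'$, so the first join is $\leq_T$ the second. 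For the reverse inequality, I need that every reflection $r\in T$ with $r\leq_T\unfold(w')$ lies below $\unfold(\pi_{T'}(w',c'))$: write $r\in\fold^{-1}(t')$ for the reflection $t'\in T'$ it folds to; since $r\leq_T\unfold(w')$ one wants to conclude $r\leq_T\unfold(w')$ forces $t'\leq_{T'}w'$ — but this requires knowing $r\leq_T\unfold(w')$ implies all of $\fold^{-1}(t')$ lie below $\unfold(w')$, which follows because $\unfold(w')\in\unfold(W')$ has moved space stable under the symmetry identifying the reflections in each fiber, together with \eqref{EqBradyWatt}. Then $t'\leq_{T'}w'$ gives $r\leq_T\unfold(t')\leq_T\pi_{T'}$-image, so $r$ is below the first join. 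The two joins thus coincide, proving the lemma.

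I expect the main obstacle to be the reverse containment in the last step: showing that an \emph{arbitrary} reflection $r\in T$ lying below $\unfold(w')$ is ``compatible'' with the folding, i.e., that its whole fiber $\fold^{-1}(\fold\text{-image of }r)$ lies below $\unfold(w')$. This is where one genuinely uses that $\unfold(w')$ is in the subgroup $\unfold(W')$ and not just an arbitrary element of $W$; the cleanest route is via moved spaces and the Brady–Watt criterion \eqref{EqBradyWatt}, exploiting that $\mov(\unfold(w'))$ is invariant under the diagram automorphism whose orbits are the fibers of $\fold$, so it contains $\mov(r)$ iff it contains $\mov(r'')$ for every $r''$ in the same fiber, hence contains $\mov(\unfold(t'))=\sum_{r''\in\fold^{-1}(t')}\mov(r'')$. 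Once that compatibility is in hand, the rest is bookkeeping with the lattice embedding of \Cref{lem:lattice_embedding}.
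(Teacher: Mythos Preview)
Your proposal is correct and follows essentially the same route as the paper: reduce to $k=1$ by induction, establish the key identity $\unfold(\pi_{T'}(w',c'))=\pi_T(\unfold(w'),c)$ via the lattice embedding of \Cref{lem:lattice_embedding}, and conclude using that $\unfold$ is a homomorphism. The paper's own proof is terser---it asserts the equality $\bigvee_{t'\leq_{T'}w'}\unfold(t')=\pi_T(\unfold(w'))$ directly, relying on the fiberwise equivalence $r\leq_T\unfold(w')\iff t'\leq_{T'}w'$ already used inside the proof of \Cref{lem:lattice_embedding}---whereas you spell out the reverse inequality via moved spaces and diagram-automorphism invariance, which is a legitimate way to make that step explicit.
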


\begin{proof}
It suffices to prove the case when $k=1$ since the general result then follows by induction. Choose $w'\in \U$. If $t'\in T'$ is such that $t'\leq_{T'} w'$, then $\unfold(t')\leq_{T'}\unfold(w')$. By the lattice embedding of~\Cref{lem:lattice_embedding}, we have $\pi_T(\U)=\unfold(\bigvee_{t'\leq_\U \uu} t') = \bigvee_{t'\leq_\U \uu} \unfold(t')=\pi_T(\unfold(\uu))$.
Because $\unfold$ is a group homomorphism, we have \[\unfold(\pop_{T'}(\uu))=\unfold(\uu\pi_{T'}(\uu)^{-1})=\unfold(\uu)\unfold(\pi_{T'}(\uu))^{-1}=\unfold(\uu)\pi_T(\unfold(\uu))^{-1}\] \[=\pop_T(\unfold(\uu)).\qedhere\]
\end{proof}

We can now prove the part of~\Cref{thm:cinverseorbit} stating that $c^{-1}$ requires $h-1$ iterations of $\pop_T$ to reach the identity.
\begin{corollary}\label{cor:cinverseorbit}
We have $\pop_T^{h-1}(c^{-1})=e$. If $W$ is not the trivial group, then $\pop_T^{h-2}(c^{-1})\neq e$.
\end{corollary}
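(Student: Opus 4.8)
The plan is to reduce to the dihedral case treated in \Cref{sec:dihedral} and then transport the conclusion to $W$ along the Coxeter-plane folding via \Cref{thm:pop_embedding}.

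First I would set up the folding. Any finite irreducible Coxeter group $W$ with Coxeter number $h$ folds to $I_2(h)$: writing $S=S_+\sqcup S_-$ for the bipartition of the simple system and $c_\pm=\prod_{s\in S_\pm}s$, the element $c=c_+c_-$ is a Coxeter element of $W$, and $\unfold$ carries the bipartite Coxeter element $c'$ of $I_2(h)$ to $c$. By \Cref{cor:orbit_structure} the orbit structure of $\pop_T$ does not depend on the chosen Coxeter element, so there is no loss in assuming $c=\unfold(c')$ is of this bipartite form. Since $\unfold$ is a group homomorphism, $\unfold\bigl((c')^{-1}\bigr)=\unfold(c')^{-1}=c^{-1}$.

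Next I would apply \Cref{thm:pop_embedding} with $w'=(c')^{-1}$: for every $k\geq 0$,
\[
\pop_T^k(c^{-1})=\pop_T^k\bigl(\unfold((c')^{-1})\bigr)=\unfold\!\left(\pop_{T'}^k\bigl((c')^{-1}\bigr)\right).
\]
The dihedral analysis of \Cref{sec:dihedral} tells us that in $I_2(h)$ the element $(c')^{-i}$ requires exactly $h-i$ iterations of $\pop_T$ to reach the identity; taking $i=1$ gives $\pop_{T'}^{h-1}\bigl((c')^{-1}\bigr)=e$, and, when $h\geq 2$, also $\pop_{T'}^{h-2}\bigl((c')^{-1}\bigr)\neq e$. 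Substituting $k=h-1$ into the displayed identity yields $\pop_T^{h-1}(c^{-1})=\unfold(e)=e$. Substituting $k=h-2$ yields $\pop_T^{h-2}(c^{-1})=\unfold\bigl(\pop_{T'}^{h-2}((c')^{-1})\bigr)$; since $W$ is nontrivial we have $h\geq 2$, and since $\unfold$ is injective while $\pop_{T'}^{h-2}\bigl((c')^{-1}\bigr)\neq e$, we conclude $\pop_T^{h-2}(c^{-1})\neq e$.

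There is no genuinely hard step here—the substance has already been packaged into \Cref{thm:pop_embedding} and the dihedral computation—so the only points to watch are minor: (i) the legitimacy of replacing the given Coxeter element by a bipartite one, which is precisely \Cref{cor:orbit_structure}; and (ii) the edge case $h=2$ in the second assertion, where $h-2=0$ and the claim degenerates to $c^{-1}\neq e$, true because $c^{-1}$ is then a nontrivial reflection. Thus the corollary is simply the combination of these earlier results.
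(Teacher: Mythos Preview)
Your proof is correct and follows essentially the same route as the paper: reduce to the dihedral group $I_2(h)$ via the Coxeter-plane folding, invoke the independence of the orbit structure from the choice of Coxeter element to assume $c=\unfold(c')$, and then transport the dihedral computation through \Cref{thm:pop_embedding} using injectivity of $\unfold$. The only cosmetic difference is that the paper cites \Cref{prop:pop_props} directly rather than its consequence \Cref{cor:orbit_structure}, and you handle the $h=2$ edge case a bit more explicitly.
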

\begin{proof}
If $W$ is trivial, then so is the result, so we may assume $|W|\geq 2$. Let $W'=I_2(h)$, and let $c'$ be a Coxeter element of $W'$. We have seen that $W$ folds to $W'$ via the Coxeter plane and that $\unfold(c')$ is a Coxeter element of $W$. By \Cref{prop:pop_props}, we may assume $c=\unfold(c')$. As discussed in \Cref{sec:dihedral}, we have $\pop_{T'}^{h-2}(c^{-1})\neq\pop_{T'}(c^{-1})=e$. The result now follows from \Cref{thm:pop_embedding} and the fact that $\unfold:W'\to W$ is injective.
\end{proof}

\subsection{The inverse Coxeter element orbit}
\label{sec:inverse_c_orbit}

In this section, we prove the following theorem regarding the forward orbit of $c^{-1}$ under $\pop_T(\cdot,c)$.
\begin{theorem}\label{thm:cinverseorbit}
The forward orbit of $c^{-1}$ under $\pop_T(\cdot,c)$ is \[O_{\pop_T}(c^{-1})=\{c^{-1}, c^{-2},\ldots, c^{-(h-1)}, e\}.\] The element $c^{-1}$ has no preimages under $\pop_T$. Moreover, if $2\leq i\leq h-1$, then the only preimage of $c^{-i}$ under $\pop_T$ is $c^{-(i-1)}$. 
\end{theorem}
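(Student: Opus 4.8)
The plan is to deduce the whole statement from the dihedral case via folding, together with \Cref{prop:preimages}. By \Cref{prop:pop_props}, exactly as in the proof of \Cref{cor:cinverseorbit}, we may assume $W$ is nontrivial and $c=\unfold(c')$, where $W$ folds to $W'=I_2(h)$ through the Coxeter plane and $c'$ is the corresponding bipartite Coxeter element (the rank-one case $W=A_1$, where the statement reduces to $\pop_T(s)=e$, is immediate). First I would pin down the forward orbit of $c'^{-1}$ inside $I_2(h)$ (cf.\ \Cref{sec:dihedral}): for $1\le j\le h-1$ the element $c'^{-j}$ has full reflection length $2$, so by \eqref{EqBradyWatt} all $h$ reflections of $I_2(h)$ lie weakly below it in absolute order, and hence $\pi_{T'}(c'^{-j})$ is the join of all atoms of $\NC(I_2(h),c')$, namely $c'$. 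Therefore $\pop_{T'}(c'^{-j})=c'^{-j}(c')^{-1}=c'^{-(j+1)}$ (reading $c'^{-h}=e$), and by induction $\pop_{T'}^k(c'^{-1})=c'^{-(k+1)}$ for $0\le k\le h-1$. Now \Cref{thm:pop_embedding} transports this to $W$: $\pop_T^k(c^{-1})=\unfold(\pop_{T'}^k(c'^{-1}))=\unfold(c'^{-(k+1)})=c^{-(k+1)}$ for $0\le k\le h-1$. Since $c$ has order $h$, these are $h$ distinct elements, after which the orbit sits at $e$; this gives $O_{\pop_T}(c^{-1})=\{c^{-1},\ldots,c^{-(h-1)},e\}$.

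Next I would read off the noncrossing projection along the orbit and invoke \Cref{prop:preimages}. For $1\le i\le h-1$ the identity $\pop_T(c^{-i})=c^{-(i+1)}$ just obtained, combined with the defining relation $\pop_T(c^{-i})=c^{-i}\pi_T(c^{-i})^{-1}$, gives $\pi_T(c^{-i})=c$ after cancelling $c^{-i}$ on the left. Then \Cref{prop:preimages} says that any $x$ with $\pop_T(x)=c^{-i}$ must equal $c^{-i}\cdot c=c^{-(i-1)}$. For $i=1$ this forces $x=e$, but $\pop_T(e)=e\neq c^{-1}$ (as $c$ has order $h\ge 2$), so $c^{-1}$ has no preimage. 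For $2\le i\le h-1$ the only candidate $c^{-(i-1)}$ is in fact a preimage, because $\pop_T(c^{-(i-1)})=c^{-i}$ by the first part (here $1\le i-1\le h-2$); hence $c^{-(i-1)}$ is the unique preimage of $c^{-i}$.

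I expect the only delicate point to be the identity $\pi_T(c^{-i})=c$, and the reason the above route works is that it obtains this identity \emph{as a consequence} of knowing the orbit, rather than directly. A direct proof would be genuinely harder when $c^i$ has a nonzero fixed space: then the reflections weakly below $c^{-i}$ generate only the proper parabolic subgroup $W_{\mathrm{Fix}(c^i)}$ (for instance $i=2$ in type $B_3$), and one would have to show that its noncrossing closure with respect to $c$ is all of $W$. Everything else — the dihedral computation, \Cref{thm:pop_embedding}, and \Cref{prop:preimages} — is either elementary or already available, so the main obstacle is really just assembling these pieces in the right order.
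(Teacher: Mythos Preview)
Your argument is correct. The overall shape---transport the dihedral computation through \Cref{thm:pop_embedding}, then feed the resulting $\pi_T(c^{-i})=c$ into \Cref{prop:preimages}---matches the paper's strategy, but you obtain the key identity $\pi_T(c^{-i})=c$ differently. The paper proves it directly as \Cref{prop:forward_cinv}: it shows via a moved-space computation that the set of reflections below $c^{-i}$ is closed under conjugation by $c$, and then invokes \Cref{Lem:joinofcorbit} (whose proof in turn uses the classical fact that the centralizer of $c$ is the cyclic group $\langle c\rangle$). You instead read off $\pi_T(c^{-i})=c$ \emph{a posteriori} from $\pop_T(c^{-i})=c^{-(i+1)}$, which you already have from folding. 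Your route is more economical for the theorem itself, avoiding both the moved-space argument and the centralizer result; the paper's route, on the other hand, establishes \Cref{Lem:joinofcorbit} and the conjugation-invariance of $\{t:t\le_T c^{-i}\}$ as standalone facts, and gives an approach that does not go through $I_2(h)$. Your closing remark about the difficulty of a direct attack when $c^i$ has nontrivial fixed space is exactly the issue the paper's moved-space/centralizer argument is designed to circumvent.
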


Using the fact that $W$ folds to $I_2(h)$ via the Coxeter plane, we proved in \Cref{cor:cinverseorbit} that $c^{-1}$ requires $(h-1)$ iterations of $\pop_T$ to reach the identity. Let us provide an alternative proof of this fact. We require a lemma, which states that the join in the noncrossing partition lattice of any orbit of a reflection under the conjugation action of $c$ is just $c$. 

\begin{lemma}\label{Lem:joinofcorbit}
If $t\in T$ is a reflection, then $c=\bigvee\limits_{k=0}^{h-1} t^{c^{k}}$.
\end{lemma}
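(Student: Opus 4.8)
The plan is to show that the element
\[v:=\bigvee_{k=0}^{h-1}t^{c^k}\]
(which makes sense because every reflection lies below $c$ in absolute order, hence is an atom of $\NC(W,c)$) is forced to equal the top element $c$. The strategy has two parts: first show that $v$ is fixed by conjugation by $c$, and then show that the only noncrossing partition other than $e$ that is $c$‑invariant is $c$ itself. For the first part, I would use that conjugation by $c$ restricts to a lattice automorphism of $\NC(W,c)$: it preserves absolute order and fixes $c$, hence preserves the interval $[e,c]$ and is an order isomorphism of it. Since $(t^{c^k})^c=t^{c^{k+1}}$ and $c^h=e$, conjugation by $c$ permutes the set $\{t^{c^k}:0\le k\le h-1\}$ cyclically, so it fixes the join of that set; that is, $v^c=v$, so $v$ centralizes $c$.

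Next I would invoke Springer's theory of regular elements: the centralizer of the Coxeter element $c$ in $W$ is the cyclic group $\langle c\rangle$. Hence $v=c^j$ for some $j$, and since $t\leq_T v$ we have $v\neq e$, so $j\not\equiv0\pmod h$. It then remains to show that $c^j\leq_T c$ forces $j\equiv1\pmod h$, i.e.\ that $e$ and $c$ are the only powers of $c$ lying in $\NC(W,c)$. Using the identity $\ell_T(w)=\dim\mov(w)=n-\dim\ker(w-1)$ (with $n$ the rank of $W$), the condition $c^j\leq_T c$ is equivalent to $\ell_T(c^{j-1})+\ell_T(c^{j})=n$, i.e.\ to $\dim\ker(c^{j-1}-1)+\dim\ker(c^{j}-1)=n$. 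For $2\le j\le h-1$ this fails: the two kernels intersect only in $0$ (a common vector would be fixed by $c$, but a Coxeter element has no nonzero fixed vector), and a one‑line check shows that an eigenvector $\eta$ of $c$ for the primitive eigenvalue $e^{2\pi i/h}$ lies neither in $\ker(c^{j-1}-1)$, nor in $\ker(c^{j}-1)$, nor in their (direct) sum, since $e^{2\pi i(j-1)/h}\ne1$ and $e^{2\pi ij/h}\ne1$. Therefore $\dim\ker(c^{j-1}-1)+\dim\ker(c^{j}-1)\le n-1$, so $c^j\not\leq_T c$ for these $j$, forcing $j\equiv1$ and $v=c$.

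I expect the work here to be a matter of assembling the right inputs rather than of genuine difficulty: the $c$‑equivariance of the join is elementary, and once it is in place the conclusion is essentially pinned down by Springer's description of $C_W(c)$ together with the short reflection‑length computation for powers of $c$. It is worth noting why the more naive, purely geometric route does not suffice: one might hope that $\mov(v)$, being a $c$‑invariant subspace containing $\sum_k c^k\cdot\mov(t)$, must be all of $V$; but in types such as $D_4$ the $c$‑orbit of a root can span a proper subspace of $V$, so the noncrossing join genuinely "jumps up" past that span, and it is exactly this jump that the centralizer argument handles cleanly.
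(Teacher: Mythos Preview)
Your argument is correct and follows the same overall strategy as the paper: show that $v$ commutes with $c$ by using that conjugation by $c$ is a lattice automorphism of $\NC(W,c)$, invoke the classical fact that $C_W(c)=\langle c\rangle$, and then rule out $c^j\in\NC(W,c)$ for $2\le j\le h-1$.

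The only difference lies in that last step. The paper first reduces (via a reflection automorphism) to a bipartite Coxeter element $c=\unfold(c')$ coming from the Coxeter-plane folding $I_2(h)\hookrightarrow W$, and then observes that the statement in $I_2(h)$ is immediate because every nontrivial rotation there has reflection length $2$. You instead argue directly from $\ell_T(w)=n-\dim\ker(w-1)$: the kernels of $c^{j-1}-1$ and $c^j-1$ meet only in $0$, and the eigenspace for the primitive eigenvalue $e^{2\pi i/h}$ avoids their sum, so the sum of their dimensions is at most $n-1$. Your route is more self-contained, needing only standard facts about eigenvalues of $c$ rather than the folding machinery developed earlier in the paper; the paper's route is a one-liner once that machinery is available. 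Both are clean, and your closing remark about why the naive ``$\mov$'' argument fails (e.g.\ in $D_4$) is a nice complement.
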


\begin{proof}
Since $W$ folds to $I_2(h)$ via the Coxeter plane, we can consider the Coxeter element $\unfold(c')$ of $W$, where $c'$ is a Coxeter element of $I_2(h)$. Each Coxeter element of $W$ can be obtained by applying a reflection automorphism to $\unfold(c')$, so it suffices to prove the lemma when $c=\unfold(c')$.

Conjugation by $c$ induces an automorphism of $\NC(W,c)$; therefore, if a set $O \subseteq \NC(W,c)$ is invariant under conjugation by $c$, then the element $x=\bigvee_{w \in O} w$ is invariant under conjugation by $c$:
	\[x^c = \left(\bigvee_{w \in O} w\right)^c = \bigvee_{w \in O} w^c = \bigvee_{w \in O} w = x.\]
It is a classical result that the centralizer of $c$ is the cyclic group generated by $c$ (see \cite[Proposition~30]{Carter}), so we must have $x=c^i$ for some integer $i$.  Therefore, we are left to prove that the only powers of $c$ in $\NC(W,c)$ are $c$ and $e$. This will follow if we can show that the only powers of $c'$ in $\NC(I_2(h),c')$ are $c'$ and $e$.  But this is clear since the absolute length of any power of $c'$ (a rotation) is $0$ or $2$ (see also~\cite[Lemma 12.2]{bessis2015finite}).
\end{proof}

\begin{proposition}
For each integer $1\leq i\leq h-1$, we have $\pop_T(c^{-i})=c^{-(i+1)}$. Thus, the forward orbit of $c^{-1}$ under $\pop_T$ is $\{c^{-1},c^{-2},\ldots,c^{-(h-1)},e\}$, which has size $h$.
\label{prop:forward_cinv}
\end{proposition}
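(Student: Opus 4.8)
The plan is to reduce everything to the single identity $\pi_T(c^{-i})=c$ for $1\le i\le h-1$. Granting this, the definition of $\pop_T$ gives
\[
\pop_T(c^{-i})=c^{-i}\pi_T(c^{-i})^{-1}=c^{-i}c^{-1}=c^{-(i+1)}
\]
(interpreting $c^{-h}$ as $e$). Iterating from $c^{-1}$ then produces the claimed forward orbit $O_{\pop_T}(c^{-1})=\{c^{-1},c^{-2},\ldots,c^{-(h-1)},e\}$, and since $\pop_T(e)=e$ (the join over the empty set of reflections below $e$ is $e$) this orbit is closed. Finally, $c^{0}=e,c^{-1},\ldots,c^{-(h-1)}$ are $h$ distinct elements because $c$ has order $h$, so the orbit has size exactly $h$.

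To prove $\pi_T(c^{-i})=c$, I would proceed as follows. Let $R_i=\{t\in T: t\le_T c^{-i}\}$ be the set of reflections lying weakly below $c^{-i}$ in absolute order. Since $c$ centralizes $c^{-i}$ and conjugation by $c$ preserves $\le_T$, we have $t\le_T c^{-i}\iff t^{c}\le_T (c^{-i})^{c}=c^{-i}$, so $R_i$ is invariant under conjugation by $c$. Because $1\le i\le h-1$ forces $c^{-i}\ne e$, we have $\ell_T(c^{-i})\ge 1$, hence $R_i\ne\varnothing$; choose $t\in R_i$, and observe that the whole $c$-conjugation orbit $\{t^{c^{k}}: 0\le k\le h-1\}$ is contained in $R_i$. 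Every reflection of $W$ lies weakly below $c$, so $R_i\subseteq\NC(W,c)$ and the join defining $\pi_T(c^{-i})$ is taken inside $\NC(W,c)$; therefore
\[
\pi_T(c^{-i})=\bigvee_{t'\in R_i} t' \ \ge\ \bigvee_{k=0}^{h-1} t^{c^{k}}=c,
\]
the last equality being \Cref{Lem:joinofcorbit}. On the other hand, $\pi_T(c^{-i})\in\NC(W,c)$ means $\pi_T(c^{-i})\le_T c$. Combining the two bounds gives $\pi_T(c^{-i})=c$.

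I do not expect a genuine obstacle here: the substantive input is \Cref{Lem:joinofcorbit}, which is already in hand, and the only points needing care are that $R_i$ is stable under conjugation by $c$ and nonempty, and that $\pi_T(c^{-i})$, as an element of $\NC(W,c)$, is bounded above by $c$ — which pins down its value once the reverse inequality is obtained from a full $c$-orbit sitting inside $R_i$. The statements about the orbit itself and its size are then immediate, using $\operatorname{ord}(c)=h$ and $\pop_T(e)=e$.
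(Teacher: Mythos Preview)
Your proof is correct and follows the same overall architecture as the paper's: show that the set $R_i=\{t\in T: t\le_T c^{-i}\}$ is nonempty and stable under conjugation by $c$, then invoke \Cref{Lem:joinofcorbit} to force $\pi_T(c^{-i})=c$.

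The one notable difference is in how you establish the $c$-stability of $R_i$. The paper argues geometrically via the Brady--Watt moved-space characterization \eqref{EqBradyWatt}: from $\mov(t)\subseteq\mov(c^{-i})$ it computes explicitly that $c\cdot\alpha_t\in\mov(c^{-i})$, whence $\mov(t^c)\subseteq\mov(c^{-i})$ and $t^c\le_T c^{-i}$. You instead use the purely algebraic observation that conjugation by any group element is an automorphism of the absolute order (since $T$ is conjugation-closed, $\ell_T$ is conjugation-invariant), and since $c$ commutes with $c^{-i}$ it fixes $c^{-i}$ in that order, so it permutes $R_i$. Your route is shorter and avoids the geometric representation entirely; the paper's route has the virtue of making the connection to moved spaces explicit, but for this particular statement your argument is the more economical one.
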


\begin{proof}
Fix $1\leq i\leq h-1$. We claim that if $t\in T$ is such that $t \leq_T c^{-i}$, then $t^c \leq_T c^{-i}$.  According to \eqref{EqBradyWatt}, we have $\mov(t) \subseteq \mov(c^{-i})$.  Take $\alpha_t$ to be a vector orthogonal to the hyperplane $H_t$ so that $\mov(t) = \spn\{\alpha_t\}$.  Then $\alpha_t\in\mov(c^{-i})$, so there exists a vector $\beta\in V$ such that $(c^{-i}-1)\cdot\beta = \alpha_t$. Applying $c$ to each side of this equation shows that \[(c^{-i}-1)\cdot(c\cdot\beta)=c\cdot((c^{-i}-1)\cdot\beta)=c\cdot\alpha_t.\] Thus, $\spn\{c\cdot\alpha_t\}\subseteq\mov(c^{-i})$. Since $c\cdot H_t=H_{t^c}$, we know that $c\cdot\alpha_t$ is orthogonal to $H_{t^c}$. Thus, $\mov(t^c)=\spn\{c\cdot\alpha_t\}$. Appealing to \eqref{EqBradyWatt} again, we find that $t^c\leq_T c^{-i}$. 

Note that there must exist some $t\in T$ with $t \leq_T c^{-i}$ since $c^{-i} \neq e$. It follows from the preceding paragraph that $t^{c^k}\leq_T c^{-i}$ for all $0\leq k\leq h-1$. Therefore, we can use \Cref{Lem:joinofcorbit} to see that \[c=\bigvee\limits_{k=0}^{h-1} t^{c^{k}}\leq_T\bigvee_{t'\leq_T c^{-i}}t'=\pi_T(c^{-i}).\] Consequently, $\pi_T(c^{-i})=c$, so $\pop_T(c^{-i})=c^{-i}c^{-1}=c^{-(i+1)}$. 
\end{proof}

\begin{corollary}
The element $c^{-1}$ has no preimages under $\pop_T$.  Moreover, if $2 \leq i \leq h-1$, then the only preimage of $c^{-1}$ under $\pop_T$ is $c^{-(i-1)}$.
\end{corollary}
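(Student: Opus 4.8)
The plan is to obtain the corollary as an immediate consequence of \Cref{prop:forward_cinv} and \Cref{prop:preimages}. The crucial input is that the proof of \Cref{prop:forward_cinv} establishes $\pi_T(c^{-i})=c$ for every integer $1\leq i\leq h-1$, so \Cref{prop:preimages} applies verbatim with $w=c^{-i}$: any preimage $x$ of $c^{-i}$ under $\pop_T$ must equal $c^{-i}c=c^{-(i-1)}$.

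First I would rule out preimages of $c^{-1}$. If $x\in W$ satisfied $\pop_T(x)=c^{-1}$, then since $\pi_T(c^{-1})=c$, \Cref{prop:preimages} would force $x=c^{-1}c=e$. But $\pop_T(e)=e\cdot\pi_T(e)^{-1}=e$, and $e\neq c^{-1}$ because $W$ is nontrivial (so $h\geq 2$); this contradiction shows $c^{-1}$ has no preimages. (If $W$ is trivial there is nothing to prove.) For the second claim, fix $i$ with $2\leq i\leq h-1$ and suppose $\pop_T(x)=c^{-i}$. Again $\pi_T(c^{-i})=c$, so \Cref{prop:preimages} gives $x=c^{-(i-1)}$, so $c^{-(i-1)}$ is the only possible preimage; and it genuinely is one, since $1\leq i-1\leq h-1$ lets us invoke \Cref{prop:forward_cinv} to conclude $\pop_T(c^{-(i-1)})=c^{-i}$.

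There is no substantial obstacle here: the work was already done in \Cref{prop:forward_cinv}, which computed $\pi_T$ on all the negative powers of $c$, and in \Cref{prop:preimages}, which constrains preimages of any element whose noncrossing projection is $c$. The only minor points to keep in mind are the trivial-group edge case and verifying that $c^{-(i-1)}$ falls within the range of exponents to which \Cref{prop:forward_cinv} applies.
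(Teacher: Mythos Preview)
Your proposal is correct and follows essentially the same approach as the paper: both combine \Cref{prop:preimages} (which forces any preimage of $c^{-i}$ to be $c^{-(i-1)}$, using $\pi_T(c^{-i})=c$) with \Cref{prop:forward_cinv} (which confirms $c^{-(i-1)}$ is genuinely a preimage when $i\geq 2$). Your version is simply more explicit about the $i=1$ case and the edge checks, but the argument is the same.
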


\begin{proof}
  By \Cref{prop:preimages} and \Cref{prop:forward_cinv}, the only possible preimage of $c^{-i}$ under $\pop_T$ is $c^{-(i-1)}$.  Furthermore, $c^{-(i-1)}$ is actually a preimage of $c^{-i}$ unless $i=1$.
\end{proof}

This completes the proof of \Cref{thm:cinverseorbit}.

\section{Types $A$ and $B$}
\label{sec:typeab}
In this section, we prove the following theorem using type-by-type analysis of the dynamics of $\pop_T$ on Coxeter groups of types~$A$ and $B$.  (The dihedral case was addressed in~\Cref{sec:dihedral}, and $H_3$ was checked by computer).
\begin{theorem}\label{thm:main1}
Let $W$ be a Coxeter group of coincidental type with a fixed Coxeter element $c$ and Coxeter number $h$. Then 
\begin{itemize}
    \item $\pop_T^{h-1}(w)=e$ for all $w\in W$,
    \item  $\max\limits _{w\in W}\left|O_{\pop_T}(w)\right|=h$, and
    \item the only forward orbit of $\pop_T$ of size $h$ is $O_{\pop_T}(c^{-1})$. 
\end{itemize}
\end{theorem}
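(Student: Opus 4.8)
## Proof Proposal for Theorem~\ref{thm:main1}

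The plan is to handle the three bullet points together through a type-by-type analysis of types $A$ and $B$ (with the dihedral types already done in Section~\ref{sec:dihedral} and $H_3$ checked by computer), using in each case a concrete combinatorial model of the noncrossing partition lattice and an explicit description of $\pi_T$. The overall strategy is to prove a statement that is uniform across these types: for every $w \in W$ with $w \neq e$, one has $\lvert O_{\pop_T}(w)\rvert \leq h$, with equality only for $w = c^{-1}$. The first bullet ($\pop_T^{h-1}(w)=e$ for all $w$) is equivalent to saying every forward orbit has size at most $h$; the second bullet then follows from \Cref{prop:forward_cinv}, which already exhibits $O_{\pop_T}(c^{-1})$ as an orbit of size exactly $h$; and the third bullet is the uniqueness clause. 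So everything reduces to: \emph{(i)} a bound $\lvert O_{\pop_T}(w)\rvert \leq h$, and \emph{(ii)} a rigidity statement forcing equality only at $c^{-1}$.

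For the bound, I would introduce a statistic on $W$ that strictly decreases under $\pop_T$ as long as $w \notin \NC(W,c)$ (equivalently $\pop_T(w)\neq e$ is not yet reached), and whose maximum value over $W$ is $h-1$. A natural candidate, suggested by the dihedral analysis and by the folding philosophy of \Cref{sec:folding}, is built from how $w$ "wraps around" the Coxeter plane: roughly, one tracks a rotation number measuring the position of $w$ relative to the $h$-fold symmetry induced by conjugation by $c$. Concretely in type $A$, using the cycle model of $\mathfrak{S}_n$ and the standard Coxeter element $c = (1\,2\,\cdots\,n)$, the noncrossing projection $\pi_T(w)$ is obtained by taking the noncrossing closure of the partition of $[n]$ induced by the reflections (transpositions) below $w$, and $\pop_T$ has a transparent action on the associated noncrossing diagram. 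I would pin down exactly how the relevant statistic drops by at least $1$ at each non-terminal step, using \Cref{lem:monotone} to control how $\pi_T$ shrinks along the orbit. In type $B$, I would run the analogous argument with signed permutations and the type-$B$ noncrossing partition model (centrally symmetric noncrossing partitions of $\{\pm 1,\dots,\pm n\}$), where $h=2n$.

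For the rigidity statement, the key is that an orbit of full length $h$ must pass through every "level" of the decreasing statistic, and in particular its element $x$ with $\pi_T(x)=c$ at the bottom forces, by \Cref{prop:preimages}, that the unique preimage at each stage is $x c$; iterating this backward shows the whole orbit is forced to be $c^{-1}, c^{-2}, \dots, c^{-(h-1)}, e$ once we know $\pop_T(c^{-(h-1)}) = e$ and that $c^{-(h-1)}$ is the only element one step above $e$ in an orbit of length $h$. This last point is where the type-specific combinatorics really does the work: I must show that the only $w$ with $\ell_T(w) = n$ (full reflection length), $\pi_T(w) = c$, and whose $\pop_T$-image is already a noncrossing partition of "rank $n-1$ of the right kind" is exactly $c^{-1}$ — i.e., that $c^{-1}$ is the unique element whose entire backward $\pop_T$-trajectory has length $h$. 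I expect \textbf{this uniqueness clause to be the main obstacle}: the bound on orbit size is a reasonably mechanical consequence of monotonicity plus a well-chosen statistic, but ruling out \emph{all} other full-length orbits requires a genuinely careful case analysis of which elements of $\mathfrak{S}_n$ (resp.\ the hyperoctahedral group) can sit at each prescribed level of the orbit, and showing the constraints cascade to a single possibility. The data in \Cref{fig:data} (columns $h$ having a single entry equal to $1$ in types $A$ and $B$) is a useful sanity check that the target statement is correct.
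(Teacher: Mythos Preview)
Your overall architecture matches the paper exactly: a type-by-type argument (dihedral done, $H_3$ by computer, then $A$ and $B$), organized around a statistic on $W$ that strictly decreases under $\pop_T$ for $w\neq e$ and whose maximum value is $h-1$. Where your proposal is still soft is that you have not named the statistic. In the paper it is the \emph{antiexceedance count}: in type $A_{n-1}$ with $c=(1\,2\cdots n)$, set $\Aexc(w)=\{i\in[n]:i<w^{-1}(i)\}$ and $\aexc(w)=\lvert\Aexc(w)\rvert$; the key identity is $\aexc(\pop_T(w))=\aexc(w)-\cyc_{>1}(\pi_T(w))$, so $\aexc$ drops by at least $1$ whenever $w\neq e$. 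The type-$B$ statistic is the obvious signed analogue for the total order $\overline 1\prec\cdots\prec\overline n\prec 1\prec\cdots\prec n$. Your ``rotation number on the Coxeter plane'' is evocative but not precise enough to carry the argument; the antiexceedance statistic is what actually makes the proof go through.

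More importantly, your uniqueness argument is both more complicated than necessary and has a gap. You propose to trace an $h$-long orbit backward using \Cref{prop:preimages}, but that proposition only applies when $\pi_T(w)=c$, and you never argue why every non-identity element of a maximal orbit must have full noncrossing projection (indeed, in type~$D$ they need not, which is exactly why uniqueness fails there). The paper sidesteps this entirely: one checks directly that $c^{-1}$ is the \emph{unique} element of $W$ with $\aexc=h-1$; hence any $w\neq c^{-1}$ has $\aexc(w)\leq h-2$, so $\pop_T^{h-2}(w)=e$ and $\lvert O_{\pop_T}(w)\rvert\leq h-1$. Uniqueness of the full-length orbit is thus an immediate by-product of the strict monotonicity together with the uniqueness of the maximizer of the statistic, not a separate backward-tracing case analysis. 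You correctly flagged the uniqueness clause as the main obstacle, but the resolution is to sharpen the statistic argument (unique maximizer), not to introduce new machinery.
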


\subsection{Type $A$}
\label{sec:typea}
Recall that the Coxeter group of type~$A_{n-1}$ is the symmetric group $\mathfrak S_n$, the group of permutations of $[n]$. We write permutations in disjoint cycle notation. Throughout this section, we fix the Coxeter element $c=(12\cdots n)$. 

There is a simple combinatorial way to describe the noncrossing partitions in $\NC(\mathfrak S_n,c)$. Begin by labeling $n$ equally-spaced points on a circle with the numbers $1,\ldots,n$ in clockwise order. Now choose a set partition $\rho$ of $[n]$. For each block of $\rho$, draw the convex hull of the points on the circle labeled with the elements of that block. We say $\rho$ is a \defn{noncrossing set partition} if none of the convex hulls of different blocks intersect each other. Suppose $\rho$ is a noncrossing set partition. For each block of $\rho$, form a cycle by ordering the elements of the block cyclically in clockwise order around the circle. The product of these cycles is a noncrossing partition in $\NC(\mathfrak S_n,c)$, and every noncrossing partition in $\NC(\mathfrak S_n,c)$ arises uniquely in this way. See \Cref{FigTsack1}.

\begin{figure}[htbp]
\begin{center} 
\includegraphics[height=4cm]{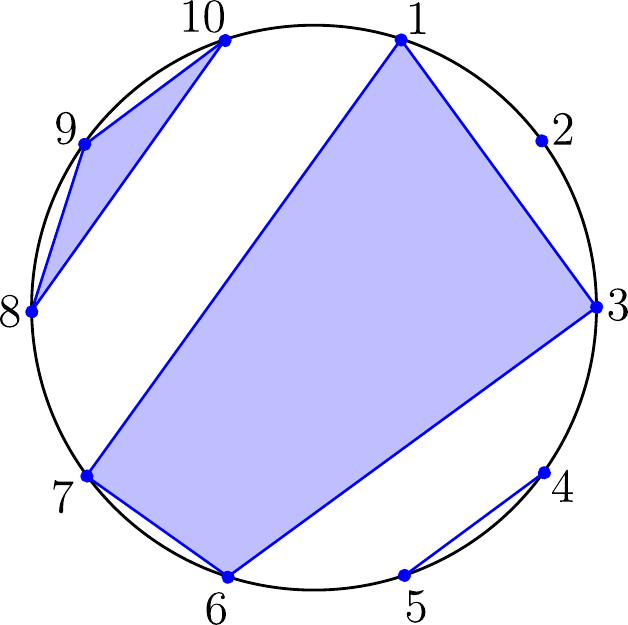}
\end{center}
\caption{A noncrossing set partition. The corresponding noncrossing partition in $\NC(\mathfrak S_n,c)$ is $(1\,\,3\,\,6\,\,7)(2)(4\,\,5)(8\,\,9\,\,10)$.}\label{FigTsack1}
\end{figure}

We will find it convenient to refer to a cycle of a permutation and to the set of entries in the cycle interchangeably. For example, we sometimes write $i\in\mathcal C$ to mean that $i$ is an entry in a cycle $\mathcal C$. Let $\cyc_{>1}(w)$ denote the number of non-singleton cycles of a permutation $w$. 

The partial order on $\NC(\mathfrak S_n,c)$, which is the restriction of the absolute order on $\mathfrak S_n$ to $[e,c]$, is easy to describe combinatorially because it corresponds to the reverse refinement order on noncrossing set partitions. In other words, if $u,v\in\NC(\mathfrak S_n,c)$, then $u\leq_T v$ if and only if every cycle in $u$ is contained (as a set) in a cycle in $v$. The noncrossing projection of a permutation $w\in \mathfrak S_n$ is the smallest noncrossing partition $v\in\NC(\mathfrak S_n,c)$ such that every cycle in $w$ is contained in a cycle in $v$.

An \defn{antiexceedance} of a permutation $w \in \mathfrak{S}_n$ is an element $i\in[n]$ such that $i<w^{-1}(i)$. Write $\Aexc(w)$ for the set of antiexceedances of $w$, and let $\aexc=\lvert\Aexc(w)\rvert$. 

\begin{theorem}\label{thm:typeAantiexc} The following properties relating antiexceedances and $\pop_T$ hold:
\begin{itemize}
  \item The element $c^{-1}$ has $h-1=n-1$ antiexceedances. 
  \item Every element of $\mathfrak S_n$ other than $c^{-1}$ has at most $n-2$ antiexceedances. 
  \item For every $w\in \mathfrak S_n$, we have $\aexc(\pop_T(w)) = \aexc(w)-\cyc_{>1}(\pi_T(w))$.
\end{itemize}
\end{theorem}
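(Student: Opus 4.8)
The plan is to treat the three bullet points in turn, deriving the first two from one elementary counting observation and the third by reducing to the cycle-block structure of $\pi_T(w)$. For the first two bullets, I would rewrite the statistic in value-based form: substituting $i = w(j)$ into the definition gives $\aexc(w) = |\{j \in [n] : w(j) < j\}|$. Since $j = 1$ can never satisfy $w(1) < 1$, this already shows $\aexc(w) \le n - 1$, with equality exactly when $w(j) \le j - 1$ for every $j \in \{2, \dots, n\}$. A short induction on one-line notation ($w(2) = 1$, which forces $w(3) = 2$, and so on) shows the unique permutation attaining equality is $c^{-1}$, which sends $j \mapsto j - 1$ and hence has $\aexc(c^{-1}) = n - 1$; this gives bullets one and two at once.

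For the third bullet, set $v = \pi_T(w)$. Since every cycle of $w$ lies in a cycle of $v$, and $v$ acts cyclically within each of its own cycles, each of $w$, $v$, and hence $\pop_T(w) = wv^{-1}$ stabilizes every block $\mathcal{C}$ of $v$. Because $\{j : u(j) < j\}$ is a disjoint union over the blocks whenever $u$ stabilizes them, both $\aexc(w)$ and $\aexc(\pop_T(w))$ split as sums over blocks $\mathcal{C}$ of the contributions of $w|_\mathcal{C}$ and $(wv^{-1})|_\mathcal{C}$, while $\cyc_{>1}(v)$ just counts the non-singleton blocks; so it suffices to show $\aexc\big((wv^{-1})|_\mathcal{C}\big) = \aexc\big(w|_\mathcal{C}\big) - 1$ when $|\mathcal{C}| \ge 2$, and $\aexc\big((wv^{-1})|_\mathcal{C}\big) = \aexc\big(w|_\mathcal{C}\big)$ when $|\mathcal{C}| = 1$ (the latter being trivial, as both restrictions are the identity). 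For $\mathcal{C} = \{a_1 < \dots < a_k\}$ with $k \ge 2$, the restriction $v|_\mathcal{C}$ is the clockwise $k$-cycle $(a_1\,a_2\,\cdots\,a_k)$, so the order-preserving relabeling $a_i \mapsto i$ (which preserves antiexceedances) turns $v|_\mathcal{C}$ into $c_k := (1\,2\,\cdots\,k)$, $w|_\mathcal{C}$ into some $\sigma \in \mathfrak{S}_k$, and $(wv^{-1})|_\mathcal{C}$ into $\sigma c_k^{-1}$.

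The crucial point is that $\sigma$ is a derangement: if $w$ fixed some $a \in \mathcal{C}$, then $\{a\}$ would be a cycle of $w$, hence a block of the orbit partition $\rho_w$ of $w$; but the orbit partition of $v = \pi_T(w)$ is the finest noncrossing set partition coarsening $\rho_w$, and a singleton block of $\rho_w$ persists in it (given any coarser noncrossing partition whose block containing $a$ has size $\ge 2$, one splits off $\{a\}$ to get a strictly finer noncrossing partition still coarsening $\rho_w$, contradicting minimality), forcing $\{a\} = \mathcal{C}$ and contradicting $k \ge 2$. Granting this, the computation is immediate: in one-line notation $\sigma c_k^{-1} = [\sigma(k), \sigma(1), \dots, \sigma(k-1)]$, so position $1$ contributes nothing and $\aexc(\sigma c_k^{-1}) = |\{i \in [k-1] : \sigma(i) \le i\}|$; comparing with $\aexc(\sigma) = |\{j \in [k] : \sigma(j) < j\}|$ and separating out the fixed points of $\sigma$ together with the index $k$ yields $\aexc(\sigma c_k^{-1}) - \aexc(\sigma) = |\{i \in [k-1] : \sigma(i) = i\}| - [\sigma(k) < k]$, which equals $-1$ since $\sigma$ is a fixed-point-free permutation of $[k]$ with $k \ge 2$.

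The two inductions and the one-line computation are routine; the one step that needs genuine care — and the main obstacle — is the derangement claim, i.e.\ that singleton cycles of $w$ remain singleton blocks of $\pi_T(w)$. This is the only place where the combinatorics of noncrossing set partitions must actually be used rather than the purely formal description of $\pi_T(w)$ as a smallest coarsening; everything else is bookkeeping over the blocks of $\pi_T(w)$.
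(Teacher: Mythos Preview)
Your argument is correct. Both you and the paper handle the first two bullets identically, and both rely on the key fact that fixed points of $w$ remain fixed points of $\pi_T(w)$ --- the paper invokes this in a single sentence (``If $j$ were a fixed point of $w$, then it would also be a fixed point of $v$''), while you spell out the minimality argument in the noncrossing closure explicitly. The difference lies in the execution of the third bullet: the paper works globally, first proving the set containment $\Aexc(\pop_T(w)) \subseteq \Aexc(w)$ and then pinpointing the unique lost antiexceedance in each non-singleton cycle $\mathcal{C}$ of $v$ as the element $j$ for which $w^{-1}(j) = \max \mathcal{C}$; you instead decompose over blocks, relabel each block order-isomorphically onto $[k]$, and reduce to the clean identity $\aexc(\sigma c_k^{-1}) = \aexc(\sigma) - 1$ for derangements $\sigma \in \mathfrak{S}_k$. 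Your route is a bit more algebraic and modular (and makes the role of the derangement hypothesis very transparent in the final line), while the paper's is more direct and yields the extra information of \emph{which} antiexceedance is destroyed in each cycle.
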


\begin{proof}
The set of antiexceedances of a permutation $w\in \mathfrak S_n$ is a subset of $[n-1]$. It is straightforward to see that this set of antiexceedances is equal to $[n-1]$ if and only if $w=c^{-1}=(n\cdots 21)$. This proves the first two bulleted items. Now fix $w\in \mathfrak S_n$, and let $v=\pi_T(w)$ so that $\pop_T(w)=wv^{-1}$.

Suppose $i\in[n]$ is not an antiexceedance of $w$. This means that $i\geq w^{-1}(i)$. Let $\mathcal C$ be the cycle of $v$ that contains $i$. Since every cycle in $w$ is contained in a cycle in $v$, we know that $\mathcal C$ also contains $w^{-1}(i)$. The cycle $\mathcal C$ is cyclically ordered clockwise, so $v(w^{-1}(i))$, which is the entry immediately following $w^{-1}(i)$ in $\mathcal C$, is less than or equal to $i$. Because $v(w^{-1}(i))=(\pop_T(w))^{-1}(i)$, this shows that $i$ is not an antiexceedance of $\pop_T(w)$. As $i$ was arbitrary, this proves that $\Aexc(\pop_T(w))\subseteq\Aexc(w)$. 

We claim $j\in\Aexc(w)\setminus\Aexc(\pop_T(w))$ if and only if $v^{-1}(j)$ is the largest entry in a non-singleton cycle. Note that this claim immediately implies the third bulleted item in the statement of the theorem. To prove this claim, suppose first that $j\in\Aexc(w)\setminus\Aexc(\pop_T(w))$. The cycle of $w$ containing $j$ and $w^{-1}(j)$ is contained in a cycle $\mathcal C$ of $v$; note that $\mathcal C$ is not a singleton cycle because $w^{-1}(j)\neq j$. If $w^{-1}(j)$ is not the largest entry in $\mathcal C$, then $v(w^{-1}(j))$, which is the entry immediately following $w^{-1}(j)$ in $\mathcal C$, is larger than $w^{-1}(j)$. However, this implies that $\pop_T(w)^{-1}(j)=v(w^{-1}(j))>w^{-1}(j)>j$, contradicting the fact that $j\not\in\Aexc(\pop_T(w))$. This proves that $w^{-1}(j)$ must be the largest entry in a non-singleton cycle in $v$. 

To prove the reverse containment, suppose $w^{-1}(j)$ is the largest entry in a non-singleton cycle $\mathcal C$ in $v$. Note that $j\in\mathcal C$. If $j$ were a fixed point of $w$, then it would also be a fixed point of $v$, contradicting the fact that $\mathcal C$ is not a singleton cycle. Therefore, $w^{-1}(j)>j$, so $j\in\Aexc(w)$. Observe that $v(w^{-1}(j))$, which is the entry immediately following $w^{-1}(j)$ in $\mathcal C$, is the smallest entry in $\mathcal C$. Therefore, $v(w^{-1}(j))\leq j$. This proves that $j\not\in\Aexc(\pop_T(w))$, as desired. 
\end{proof}


\begin{example}
Consider the permutation $w=(1\,\,2\,\,4\,\,6\,\,5)(3)(7\,\,9)(8\,\,10)$ in $\mathfrak S_{10}$. This permutation is represented pictorially on the left of \Cref{FigTsack2}; its noncrossing projection $\pi_T(w)$ is $(1\,\,2\,\,4\,\,5\,\,6)(3)(7\,\,8\,\,9\,\,10)$, which is drawn in the middle of \Cref{FigTsack2}. Finally, $\pop_T(w)=w\pi_T(w)^{-1}$ appears on the right side of the figure. The $4$ anti-exceedances of $w$ (shown in red in the figure) are $1,5,7,8$. Since $\pi_T(w)$ has $2$ non-singleton cycles, the permutation $\pop_T(w)$ has $4-2=2$ anti-exceedances---namely, $1$ and $7$. 
\end{example}

\begin{figure}[htbp]
\begin{center} 
\includegraphics[height=4cm]{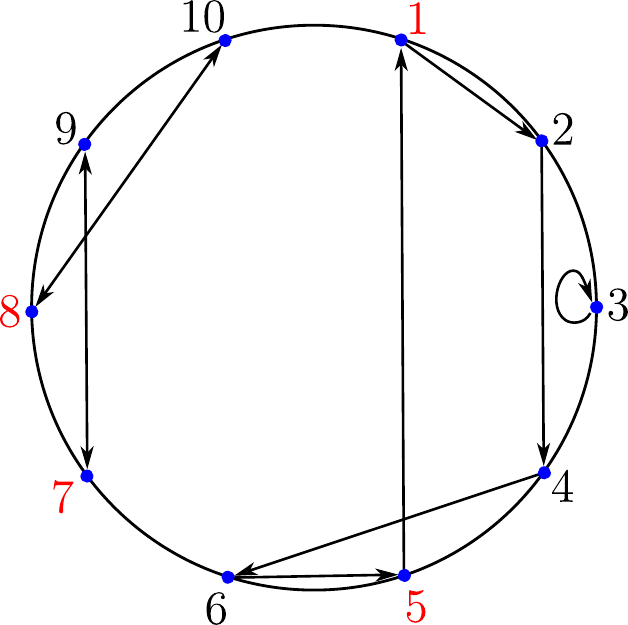}\qquad\qquad\includegraphics[height=4cm]{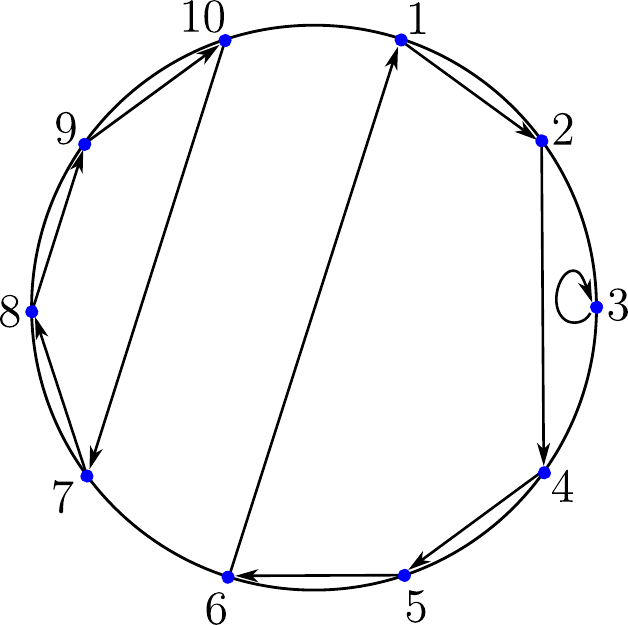}\qquad\qquad\includegraphics[height=4cm]{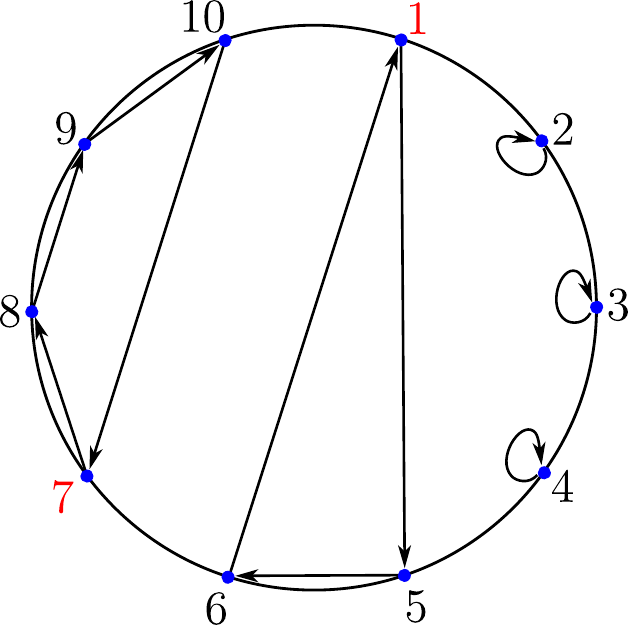}
\end{center}
\caption{A permutation (left), its noncrossing projection (middle), and its image under $\pop_T$ (right).}\label{FigTsack2}
\end{figure}

The Coxeter number of $\mathfrak S_n$ is $n$, so the following corollary completes the proof of \Cref{thm:main1} in type~$A$. 

\begin{corollary}
Every forward orbit of the map $\pop_T:\mathfrak S_n\to \mathfrak S_n$ has size at most $n$, and the only forward orbit that has size equal to $n$ is $O_{\pop_T}(c^{-1})$. 
\end{corollary}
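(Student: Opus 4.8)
The plan is to deduce the corollary directly from \Cref{thm:typeAantiexc}. The first bullet there tells us $\aexc(c^{-1}) = n-1$ and every other element has $\aexc \leq n-2$, while the third bullet says $\aexc$ strictly decreases along a $\pop_T$-orbit at every non-identity step, since $\cyc_{>1}(\pi_T(w)) \geq 1$ whenever $w \neq e$ (a non-identity permutation has some non-fixed point, hence $\pi_T(w)$ has a non-singleton cycle). Combining these: starting from any $w \neq e$, the sequence $\aexc(w) > \aexc(\pop_T(w)) > \cdots$ is strictly decreasing until it reaches the identity, at which point $\aexc = 0$. Since $\aexc(w) \leq n-1$ always, the forward orbit $\{w, \pop_T(w), \ldots, e\}$ has at most $n-1$ non-identity steps plus the identity, hence size at most $n$. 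This gives the bound $\abs{O_{\pop_T}(w)} \leq n$ for all $w$.

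Next I would pin down when equality holds. If $\abs{O_{\pop_T}(w)} = n$, then the strictly decreasing sequence of antiexceedances $\aexc(w), \aexc(\pop_T(w)), \ldots$ must pass through exactly $n$ distinct values ending at $0$ (the value at $e$), so it must be exactly $n-1, n-2, \ldots, 1, 0$; in particular $\aexc(w) = n-1$. By the first two bullets of \Cref{thm:typeAantiexc}, the only element with $n-1$ antiexceedances is $c^{-1}$, so $w = c^{-1}$. Conversely, \Cref{prop:forward_cinv} already shows $O_{\pop_T}(c^{-1})$ has size $h = n$. Hence $O_{\pop_T}(c^{-1})$ is the unique forward orbit of size $n$, where I should be slightly careful about the phrasing: a forward orbit of size $n$ is $O_{\pop_T}(w)$ for some $w$, and any such $w$ either equals $c^{-1}$ or has $O_{\pop_T}(w) \subsetneq O_{\pop_T}(c^{-1})$ — but the argument above forces $w = c^{-1}$ outright, so there is genuinely only one such orbit.

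There is no real obstacle here; the corollary is essentially a bookkeeping consequence of the monotonicity statement in \Cref{thm:typeAantiexc}. The one point worth stating explicitly is that $\cyc_{>1}(\pi_T(w)) \geq 1$ for $w \neq e$, which needs the observation that if $w$ moves some point $i$ then $i$ and $w^{-1}(i)$ lie in a common non-singleton cycle of $\pi_T(w)$ (by the description of $\pi_T$ as the coarsest noncrossing partition whose cycles contain the cycles of $w$). Everything else is immediate from the displayed theorem and from \Cref{prop:forward_cinv}.
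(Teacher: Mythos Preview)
Your proof is correct and follows essentially the same approach as the paper: both arguments use \Cref{thm:typeAantiexc} together with the observation that $\cyc_{>1}(\pi_T(w))\geq 1$ for $w\neq e$ to get a strict decrease in $\aexc$ along the orbit, then use the bound $\aexc(w)\leq n-1$ (with equality only for $c^{-1}$) to conclude. The only cosmetic difference is that the paper phrases the uniqueness step as ``$\pop_T^{n-2}(w)=e$ for all $w\neq c^{-1}$'' rather than arguing via the length of the decreasing sequence, but the content is identical.
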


\begin{proof}
We know that $O_{\pop_T}(c^{-1})$ has size $n$ by \Cref{prop:forward_cinv}. The only element of $\mathfrak S_n$ whose noncrossing projection has no non-singleton cycles is the identity element, so it follows from the third bulleted item in \Cref{thm:typeAantiexc} that $\aexc(\pop_T(w))\leq\aexc(w)-1$ for all $w\in \mathfrak S_n\setminus\{e\}$. The only element of $\mathfrak S_n$ with $0$ antiexceedances is the identity element, so it follows from the second bulleted item in \Cref{thm:typeAantiexc} that $\pop_T^{n-2}(w)=e$ for all $w\in \mathfrak S_n\setminus\{c^{-1}\}$. 
\end{proof}

\begin{conjecture}
\label{conj:A}
In type $A_{n-1}$, the number of permutations that require exactly $n-2$ or $n-1$ iterations of $\pop_T$ to reach the identity is $2^n-\binom{n}{2}$.
\end{conjecture}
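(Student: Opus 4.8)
The plan is to compute the two counts separately: permutations requiring exactly $n-1$ iterations, and those requiring exactly $n-2$. For the first count, recall that $\pop_T^{n-1}(w)=e$ for all $w$ by \Cref{thm:main1}, and the only forward orbit of size $n$ is $O_{\pop_T}(c^{-1})$, so the only permutation requiring exactly $n-1$ iterations is $c^{-1}$ itself. Hence that contributes exactly $1$ to the count. It therefore suffices to show that the number of permutations requiring exactly $n-2$ iterations is $2^n-\binom{n}{2}-1$.

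For the harder count, the natural strategy is to translate ``requires exactly $n-2$ iterations'' into a statement about antiexceedances using \Cref{thm:typeAantiexc}. The third bullet there says $\aexc(\pop_T(w))=\aexc(w)-\cyc_{>1}(\pi_T(w))$, and since $\pi_T(w)$ has at least one non-singleton cycle whenever $w\neq e$, the quantity $\aexc$ strictly decreases along any forward orbit until it reaches $0$ (the identity). Thus, for $w\neq c^{-1}$, the orbit length minus $1$ is at most $\aexc(w)\le n-2$, with equality forcing $\aexc$ to drop by exactly $1$ at each step; equivalently, $\cyc_{>1}(\pi_T(w_i))=1$ for every $i$ in the orbit before reaching $e$. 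I would first characterize the permutations $w$ with $\aexc(w)=n-2$ (these automatically have orbit length $n-1$, i.e.\ require exactly $n-2$ iterations, since each step drops $\aexc$ by at least $1$ and we need $n-2$ steps starting from $n-2$). Then I would handle, separately, permutations with $\aexc(w)<n-2$: such a $w$ requires exactly $n-2$ iterations only if $\aexc(w)=n-2$ is forced — so in fact no such $w$ contributes, unless I have miscounted the extremal structure. This reduces the conjecture to: the number of $w\in\mathfrak S_n$ with $\Aexc(w)=[n-2]$ (the full set $\{1,\dots,n-2\}$, since $\Aexc(w)\subseteq[n-1]$ and $n-1\notin\Aexc(w)$ unless $w=c^{-1}$) equals $2^n-\binom n2-1$.

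So the core combinatorial task is to enumerate permutations $w$ of $[n]$ whose antiexceedance set is exactly $\{1,2,\dots,n-2\}$, i.e.\ $w^{-1}(i)>i$ for $1\le i\le n-2$, while $w^{-1}(n-1)\le n-1$ and $w^{-1}(n)\le n$ (the latter being automatic). I would set $\sigma=w^{-1}$ and count permutations $\sigma$ with $\sigma(i)>i$ for all $i\le n-2$; such $\sigma$ are ``almost'' the reverse of subexcedant-type constraints, and a direct bijective or recursive count should yield a closed form. I expect this count to be amenable either to inclusion–exclusion on which of the constraints $\sigma(i)>i$ fail (for $i\le n-2$), or to a transfer-matrix / lattice-path argument, and the target value $2^n-\binom n2-1$ strongly suggests an inclusion–exclusion where the main term $2^n$ comes from subsets of constraints and the correction $\binom n2+1$ collects low-order terms.

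The main obstacle I anticipate is twofold. First, verifying that \emph{no} permutation with $\aexc(w)<n-2$ can require as many as $n-2$ iterations — this needs more than the monotonicity of $\aexc$; one must rule out the possibility that the orbit length exceeds $\aexc(w)$, which cannot happen by the third bullet of \Cref{thm:typeAantiexc} since $\aexc$ drops by at least $1$ each step, so actually orbit length $\le \aexc(w)+1\le n-1$ with equality only if $\aexc(w)=n-2$; this direction should be clean once stated carefully, but one must be careful about the edge case $w=c^{-1}$ (which has $\aexc=n-1$ and orbit length $n$, correctly excluded from the ``exactly $n-2$'' count). Second, and more seriously, the exact enumeration of $\{\,\sigma\in\mathfrak S_n : \sigma(i)>i \text{ for } 1\le i\le n-2\,\}$ must be carried out and shown to equal $2^n-\binom n2-1$; getting the low-order corrections exactly right (and confirming against the data in \Cref{fig:data}, e.g.\ $2^4-\binom42-1=9$ for $A_3$, $2^5-\binom52-1=21$ for $A_4$, $2^6-\binom62-1=48$ for $A_5$, all matching the tables) is where the real work lies.
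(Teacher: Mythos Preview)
This statement is a \emph{conjecture} in the paper; no proof is given there, so there is no ``paper's own proof'' to compare against. Your proposal is an attempt to settle an open problem, and it contains a genuine gap.

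The error is in the sentence ``I would first characterize the permutations $w$ with $\aexc(w)=n-2$ (these automatically have orbit length $n-1$\ldots).'' You correctly argue just before this that requiring exactly $n-2$ iterations forces $\aexc(w)=n-2$ (for $w\neq c^{-1}$) \emph{and} that $\aexc$ drops by exactly $1$ at each step. But the converse fails: having $\aexc(w)=n-2$ does not force the orbit length to be $n-1$, because $\aexc$ can drop by more than $1$ at the very first step. Concretely, in $\mathfrak S_5$ with $c=(12345)$, take $w=(132)(45)$. Then $\aexc(w)=3=n-2$, but $\pi_T(w)=(123)(45)$ has two non-singleton cycles, so $\aexc$ drops by $2$ and $\pop_T(w)=(123)$ is already noncrossing; thus $w$ requires only $2$ iterations, not $3$. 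Numerically, the Eulerian number $\left\langle {5\atop 3}\right\rangle=26$ counts permutations of $[5]$ with $\aexc=3$, whereas the table gives $21$ permutations requiring exactly $3$ iterations; the five extra permutations all behave like this example.

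There is a second, compounding error: you slide from ``$\aexc(w)=n-2$'' to ``$\Aexc(w)=\{1,\dots,n-2\}$,'' i.e.\ you assume the missing antiexceedance is specifically $n-1$. That is not forced; any element of $[n-1]$ could be the non-antiexceedance. For $n=4$ your final enumeration target (permutations $\sigma$ with $\sigma(1)>1$, $\sigma(2)>2$, $\sigma(3)\le 3$) has exactly $7$ elements, not $9$, so the count you set out to compute does not match the data you checked it against. A correct approach must keep track of the extra condition $\cyc_{>1}(\pi_T(w_i))=1$ at \emph{every} step of the orbit, not merely the initial value of $\aexc$; this recursive condition is what makes the conjecture nontrivial.
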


\subsection{Type $B$}
\label{sec:typeb}

The previous section established \Cref{thm:main1} in type~$A$. The group $A_{2n-1}$ folds to $B_n$, so it follows immediately from \Cref{thm:pop_embedding} that \Cref{thm:main1} must also hold in type~$B$. In this section, we will verify \Cref{thm:main1} in type~$B$ directly using the combinatorics of the hyperoctahedral groups; this will also serve as a useful warm-up for our analysis of type~$D$ in the next section.


For the sake of convenience, we replace negative signs with bars, writing $\overline i$ instead of $-i$. Let $\pm[n]=\{\overline n,\ldots,\overline 1,1,\ldots,n\}$. The \defn{hyperoctahedral group} $B_n$ is the group of permutations $w$ of the set $\pm[n]$ such that $w(\overline i)=\overline{w(i)}$ for all $i\in[n]$. Throughout this section, we fix the Coxeter element $c=(\overline 1\,\,\overline 2\,\cdots \overline n\,\,1\,\,2\cdots n)$ of $B_n$. 

Label $2n$ equally-spaced points on a circle with the numbers $\overline 1,\overline 2,\ldots,\overline n,1,2,$ $\ldots,n$ in clockwise order. Now choose a set partition $\rho$ of $\pm[n]$. For each block of $\rho$, draw the convex hull of the points on the circle labeled with the elements of that block. We say $\rho$ is a \defn{type-$B_n$ noncrossing set partition} if the resulting diagram is invariant under a $180^\circ$ rotation and none of the convex hulls of different blocks intersect each other. Suppose $\rho$ is a type-$B_n$ noncrossing set partition. For each block of $\rho$, form a cycle by ordering the elements of the block cyclically in clockwise order around the circle. The product of these cycles is a noncrossing partition in $\NC(B_n,c)$, and every noncrossing partition in $\NC(B_n,c)$ arises uniquely in this way. 

\begin{figure}[htbp]
\begin{center} 
\includegraphics[height=4cm]{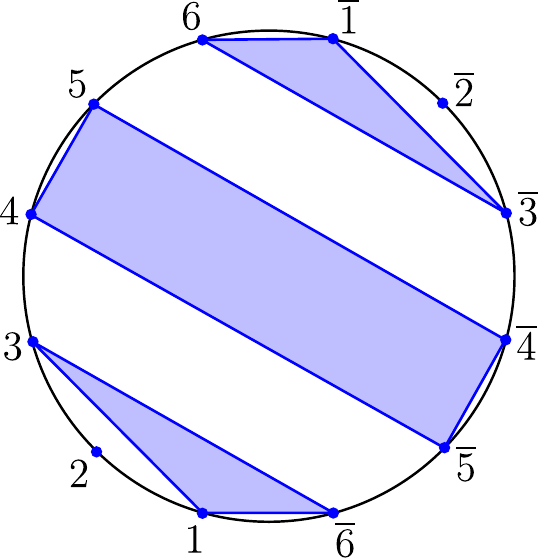}
\end{center}
\caption{A type-$B_6$ noncrossing set partition. The corresponding noncrossing partition in $\NC(B_n,c)$ is $(\overline 1\,\,\overline 3\,\,6)(1\,\,3\,\,\overline 6)(\overline 2)(2)(\overline 4\,\,\overline 5\,\,4\,\,5)$.}\label{FigTsack3}
\end{figure}

As in the previous section, we refer to a cycle of an element of $B_n$ and to the set of entries in the cycle interchangeably. Let $\cyc_{>1}(w)$ be the number of non-singleton cycles in $w$. 

The partial order on $\NC(B_n,c)$ corresponds to the reverse refinement order on type-$B_n$ noncrossing set partitions. That is, if $u,v\in\NC(B_n,c)$, then $u\leq_T v$ if and only if every cycle in $u$ is contained in a cycle in $v$. The noncrossing projection of $w\in B_n$ is the smallest noncrossing partition $v\in\NC(B_n,c)$ such that every cycle in $w$ is contained in a cycle in $v$.

Define a total order $\prec$ on the set $\pm[n]$ by 
\[\overline 1 \prec \overline 2 \prec \cdots \prec \overline n \prec 1 \prec 2 \prec \cdots \prec n.\] An \defn{antiexceedance} of $w \in B_n$ is an element $i\in[n]$ such that $i\prec w^{-1}(i)$. Write $\Aexc(w)$ for the set of antiexceedances of $w$, and let $\aexc=\lvert\Aexc(w)\rvert$.

\begin{theorem}\label{thm:typeBantiexc}
The following properties relating antiexceedances and $\pop_T$ hold:
\begin{itemize}
  \item The element $c^{-1}$ has $h-1=2n-1$ antiexceedances. 
  \item Every element of $B_n$ other than $c^{-1}$ has at most $2n-2$ antiexceedances. 
  \item For every $w\in B_n$, we have $\aexc(\pop_T(w)) = \aexc(w)-\cyc_{>1}(w)$.  
\end{itemize}
\end{theorem}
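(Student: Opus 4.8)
The plan is to follow the proof of \Cref{thm:typeAantiexc} essentially verbatim, replacing the total order $<$ on $[n]$ throughout by the order $\prec$ on $\pm[n]$ --- which is exactly the clockwise order of the $2n$ boundary points used to draw type-$B_n$ noncrossing set partitions --- and replacing ``largest entry of a cycle'' by ``$\prec$-largest entry of a cycle.''

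For the first two bulleted items, the point is that the $\prec$-maximal letter $n$ can never be an antiexceedance, so $\Aexc(w)\subseteq(\pm[n])\setminus\{n\}$ and $\aexc(w)\le 2n-1$ for all $w\in B_n$; and since $(c^{-1})^{-1}=c$ sends every letter to its $\prec$-successor except for $n\mapsto\overline 1$, we get $\Aexc(c^{-1})=(\pm[n])\setminus\{n\}$, i.e.\ $\aexc(c^{-1})=2n-1$. For the uniqueness of $c^{-1}$ one can argue directly --- if $i\prec w^{-1}(i)$ for every $i\neq n$ then a short pigeonhole induction (the $\prec$-minimal image $\overline 1$ can only be hit by $n$, forcing $w^{-1}(n)=\overline 1$ and $w^{-1}(\overline n)=1$, and so on) forces $w^{-1}=c$ --- or one can transport the type-$A$ statement: relabelling $\pm[n]$ as $[2n]$ according to $\prec$ makes the standard embedding $B_n\hookrightarrow\mathfrak S_{2n}$ carry our $c$ to the standard Coxeter element $(1\,2\,\cdots\,2n)$ and identify $\prec$ with $<$, so it preserves the number of antiexceedances; an element of $B_n$ with $2n-1$ antiexceedances must therefore map to the unique element of $\mathfrak S_{2n}$ with $2n-1$ antiexceedances, namely $c^{-1}$.

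For the third item, fix $w\in B_n$, put $v=\pi_T(w)$, and recall that $\pop_T(w)^{-1}=vw^{-1}$ and that every cycle of $w$ sits inside a cycle of $v$, the entries of each cycle of $v$ being listed in $\prec$-increasing clockwise order (wrapping from the $\prec$-maximum back to the $\prec$-minimum). First I would check $\Aexc(\pop_T(w))\subseteq\Aexc(w)$: if $i\notin\Aexc(w)$, so $w^{-1}(i)\preceq i$, and $\mathcal C$ is the cycle of $v$ containing $i$ (hence also $w^{-1}(i)$), then $\pop_T(w)^{-1}(i)=v(w^{-1}(i))$ is the clockwise successor of $w^{-1}(i)$ in $\mathcal C$ and is $\preceq i$, so $i\notin\Aexc(\pop_T(w))$. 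Then I would prove the type-$A$ dichotomy: $j\in\Aexc(w)\setminus\Aexc(\pop_T(w))$ if and only if $w^{-1}(j)$ is the $\prec$-largest entry of a non-singleton cycle $\mathcal C$ of $v$ --- if $w^{-1}(j)$ is not the $\prec$-maximum of $\mathcal C$ then $\pop_T(w)^{-1}(j)=v(w^{-1}(j))\succ w^{-1}(j)\succ j$, so $j$ stays an antiexceedance; conversely, if $w^{-1}(j)$ is the $\prec$-maximum of such a $\mathcal C$ then $j\in\mathcal C$, $j$ is not fixed by $w$, $j\prec w^{-1}(j)$, and $\pop_T(w)^{-1}(j)=v(w^{-1}(j))$ equals the $\prec$-minimum of $\mathcal C$, which is $\preceq j$. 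Each non-singleton cycle of $v$ contributes exactly one such $j$ (namely $v$ of its $\prec$-maximum), and distinct cycles contribute distinct $j$'s, so $\aexc(w)-\aexc(\pop_T(w))=\cyc_{>1}(\pi_T(w))$, which is the content of the third item.

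I expect essentially the only point needing care to be the assertion --- used both in the $\Aexc(\pop_T(w))\subseteq\Aexc(w)$ step and in the ``conversely'' part above --- that a letter fixed by $w$ is also fixed by $v=\pi_T(w)$, so that a non-singleton cycle of $v$ cannot contain a fixed point of $w$. As in type~$A$, this holds because a singleton block never crosses another block, so the singleton cycles of $w$ persist as singleton cycles of the noncrossing projection; the only type-$B$ wrinkle is that one should note the pair of singletons $\{i\},\{\overline i\}$ persists as a pair of singletons, so the sign-change reflections never intervene (equivalently, $\mov((i\,\,\overline i))=\spn\{e_i\}$ is orthogonal to $\mov(w)$ whenever $w$ fixes $i$, so $(i\,\,\overline i)\not\le_T w$ by \eqref{EqBradyWatt}). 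The $180^\circ$ rotational symmetry of type-$B_n$ noncrossing partitions never actually enters, since the whole argument proceeds cycle-by-cycle inside $\pi_T(w)$ and the clockwise order along any single cycle behaves just like the usual order in type~$A$; for the same reason I expect this proof to transpose with little friction to the type-$D$ analysis in \Cref{sec:typed}.
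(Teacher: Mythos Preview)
Your proposal is correct and follows essentially the same route as the paper, which simply says the argument is ``virtually identical'' to the type-$A$ proof with $<$ replaced by $\prec$ and records the same dichotomy that $j\in\Aexc(w)\setminus\Aexc(\pop_T(w))$ iff $w^{-1}(j)$ is the $\prec$-largest entry of a non-singleton cycle of $v=\pi_T(w)$. One tiny slip: the $j$ contributed by a non-singleton cycle $\mathcal C$ is $w$ (not $v$) applied to its $\prec$-maximum; and your closing expectation about type~$D$ is not borne out---the paper's type-$D$ argument abandons antiexceedances entirely.
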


\begin{proof}
The proof is virtually identical to that of \Cref{thm:typeAantiexc}. As before, the first two bulleted items are easy to check directly. For the third bulleted item, fix $w\in B_n$, and let $v=\pi_T(w)$. Mimicking the proof of \Cref{thm:typeAantiexc}, one can show that $\Aexc(\pop_T(w))\subseteq\Aexc(w)$. Furthermore, we have $j\in\Aexc(w)\setminus\Aexc(\pop_T(w))$ if and only if $v^{-1}(j)$ is the largest entry in a non-singleton cycle, where the word ``largest'' refers to the total order $\prec$. 
\end{proof}

The Coxeter number of $B_n$ is $2n$, so the following corollary completes the proof of \Cref{thm:main1} in type~$B$. 

\begin{corollary}
Every forward orbit of the map $\pop_T:B_n\to B_n$ has size at most $2n$, and the only forward orbit that has size equal to $2n$ is $O_{\pop_T}(c^{-1})$. 
\end{corollary}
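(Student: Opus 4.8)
The plan is to mirror the type-$A$ argument almost verbatim, using \Cref{thm:typeBantiexc} in place of \Cref{thm:typeAantiexc}. First I would invoke \Cref{prop:forward_cinv} to record that $O_{\pop_T}(c^{-1})=\{c^{-1},c^{-2},\ldots,c^{-(2n-1)},e\}$ has size exactly $h=2n$; this handles the ``$2n$ is achieved'' part and identifies the claimed exceptional orbit.

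Next I would establish that every other forward orbit has size strictly less than $2n$, equivalently that $\pop_T^{2n-2}(w)=e$ for every $w\in B_n\setminus\{c^{-1}\}$. The engine is the third bullet of \Cref{thm:typeBantiexc}: $\aexc(\pop_T(w))=\aexc(w)-\cyc_{>1}(\pi_T(w))$ (the statement writes $\cyc_{>1}(w)$, but it means the noncrossing projection, exactly as in type~$A$; I would phrase it in terms of $\pi_T(w)$ to be safe). The only element of $B_n$ whose noncrossing projection $\pi_T(w)$ has no non-singleton cycle is $w=e$, since $\pi_T(w)=e$ forces every cycle of $w$ to be a singleton. Hence $\cyc_{>1}(\pi_T(w))\geq 1$ whenever $w\neq e$, so $\aexc$ strictly decreases under $\pop_T$ on $B_n\setminus\{e\}$. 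Combined with the second bullet — every $w\neq c^{-1}$ has $\aexc(w)\leq 2n-2$ — and the observation that $e$ is the unique element with $\aexc=0$ (any nonidentity $w$ moves some $i\in[n]$, and after replacing $i$ by $\overline i$ if necessary we may assume $w^{-1}(i)\neq i$; if $i\prec w^{-1}(i)$ then $i\in\Aexc(w)$, otherwise $\overline i\prec \overline{w^{-1}(i)}=w^{-1}(\overline i)$ so $\overline i$... — actually cleaner: $\Aexc(w)=\emptyset$ means $w^{-1}(i)\preceq i$ for all $i\in[n]$, and a short argument on the poset $(\pm[n],\prec)$ forces $w=e$), we conclude that strictly decreasing a nonnegative integer bounded by $2n-2$ reaches $0$ in at most $2n-2$ steps. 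Therefore $|O_{\pop_T}(w)|\leq 2n-1<2n$ for all $w\neq c^{-1}$.

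Finally, to see that $O_{\pop_T}(c^{-1})$ is the \emph{only} orbit of size $2n$, note that any size-$2n$ orbit must contain an element with $\aexc=2n-1$ (since each step drops $\aexc$ by at least one and the orbit terminates at $e$ with $\aexc=0$), and by the first two bullets of \Cref{thm:typeBantiexc} the unique such element is $c^{-1}$.

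I do not expect a genuine obstacle here: the whole corollary is a transcription of the type-$A$ proof. The only point requiring a moment's care is the ``$\aexc=0$ iff identity'' and ``$\aexc=2n-1$ iff $c^{-1}$'' facts with respect to the twisted total order $\prec$ — these are the ``easy to check directly'' claims referenced in the proof of \Cref{thm:typeBantiexc}, so I would either cite that or spell out the one-line verification that $\Aexc(w)\subseteq[n-1]$ (because $n$ is the $\prec$-maximum of $\pm[n]$ restricted to $[n]$... more precisely $w^{-1}(n)\preceq n$ is automatic only relative to positive values, so one checks $n\notin\Aexc(w)$ directly) with equality exactly for $c^{-1}$.
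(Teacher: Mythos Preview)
Your proposal is correct and matches the paper's proof essentially verbatim: cite \Cref{prop:forward_cinv} for $|O_{\pop_T}(c^{-1})|=2n$, then use the three bullets of \Cref{thm:typeBantiexc} (together with the fact that $e$ is the unique element with $\aexc=0$) to deduce $\pop_T^{2n-2}(w)=e$ for all $w\neq c^{-1}$. Your parenthetical asides are slightly muddled---antiexceedances in type~$B$ range over $\pm[n]$, so the correct containment is $\Aexc(w)\subseteq\pm[n]\setminus\{n\}$ rather than $[n-1]$---but this does not affect the substance of the argument.
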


\begin{proof}
We know that $O_{\pop_T}(c^{-1})$ has size $2n$ by \Cref{prop:forward_cinv}. The only element of $B_n$ whose noncrossing projection has no non-singleton cycles is the identity element, so it follows from the third bulleted item in \Cref{thm:typeBantiexc} that $\aexc(\pop_T(w))\leq\aexc(w)-1$ for all $w\in \mathfrak S_n\setminus\{e\}$. The only element of $B_n$ with $0$ antiexceedances is the identity element, so it follows from the second bulleted item in \Cref{thm:typeBantiexc} that $\pop_T^{2n-2}(w)=e$ for all $w\in B_n\setminus\{c^{-1}\}$. 
\end{proof}

\begin{conjecture}
\label{conj:B}
The number of elements of $B_n$ that require exactly $2n-2$ or $2n-1$ iterations of $\pop_T$ to reach the identity is $2^n-n$.
\end{conjecture}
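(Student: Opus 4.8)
The plan is to turn the statement into an enumeration via the antiexceedance statistic of~\Cref{thm:typeBantiexc}, pin down the noncrossing projection of the long-orbit elements via a structural lemma, and then count. First I would record the consequences of~\Cref{thm:typeBantiexc}: since $\pi_T(w)$ has at least one nonsingleton cycle whenever $w\neq e$, the third bullet shows that $\aexc$ strictly decreases along every step of a $\pop_T$-orbit until the identity is reached, so an element $w$ requiring $k$ iterations of $\pop_T$ has $\aexc(w)\geq k$. Together with the first two bullets and~\Cref{prop:forward_cinv}, this shows that the elements of $B_n$ requiring at least $2n-2$ iterations are exactly $c^{-1}$ (the unique element requiring $2n-1$) together with the \emph{taut} elements: those $w$ with $\aexc(w)=2n-2$ for which $\cyc_{>1}(\pi_T(w_i))=1$ for every non-identity term $w_i$ of $O_{\pop_T}(w)$ --- equivalently, those for which $\aexc$ drops by exactly $1$ at each step, so that a taut element requires exactly $2n-2$ iterations. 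It then suffices to prove that there are exactly $2^n-n-1$ taut elements.

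The key structural input is that every taut $w$ satisfies $\pi_T(w)=c$. Since $\cyc_{>1}(\pi_T(w))=1$, the $180^\circ$-symmetric description of $\NC(B_n,c)$ forces $\pi_T(w)$ to have a single nonsingleton block $B$, with $B=\overline B$, so $|B|$ is even. As in the proof of~\Cref{lem:monotone}, $w\in W_{\pi_T(w)}$, and every reflection below the one-block element $\pi_T(w)$ is supported on $B$; hence $w$ fixes $\pm[n]\setminus B$ pointwise, so all $2n-2$ antiexceedances of $w$ lie in $B$ and $|B|\in\{2n-2,2n\}$. The value $2n-2$ is impossible: then $w$ lies in a type-$B_{n-1}$ parabolic subgroup whose Coxeter element is the restriction of the cyclic order of $c$ to $B$, its antiexceedances inside $B$ coincide with its antiexceedances as an element of that $B_{n-1}$, and~\Cref{thm:typeBantiexc} applied there bounds this number by $2n-3<2n-2$. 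Hence $B=\pm[n]$, so $\pi_T(w)=c$ and $\pop_T(w)=wc^{-1}$.

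With $\pop_T(w)=wc^{-1}$ in hand, I would iterate the structural argument down the orbit. Applied to $w_{i+1}$, which has $\aexc(w_{i+1})=2n-3-i$, it shows that $\pi_T(w_{i+1})$ is a single balanced block of size at least $2n-1-i$; for $i=0$ this again forces $\pi_T(w_1)=c$ --- the alternative, a block of size $2n-2$, would make $w_1$ the inverse of a sub-Coxeter element of a $B_{n-1}$-parabolic and hence $w_0=w_1c$ a reflection, contradicting $\pi_T(w_0)=c$ --- while for larger $i$ the block may shrink. Pushing this through, a taut orbit stays in the regime $\pi_T(w_i)=c$ until it possibly drops into a proper noncrossing parabolic subgroup, inside which (the noncrossing partition lattice of that parabolic being an interval of $\NC(B_n,c)$) the remaining orbit continues to lose exactly one antiexceedance per step; this makes the count a recursion in the rank. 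The identity $2^n-n-1=(2^{n-1}-n)+(2^{n-1}-1)$, which checks against~\Cref{fig:data}, suggests the inductive shape of a bijection between the elements of $B_n$ requiring at least $2n-2$ iterations and the $2^n-n$ subsets of $[n]$ of size $\neq 1$ (equivalently, between the $2^n-n-1$ taut elements and the subsets of size at least $2$), in parallel with a hoped-for bijection pairing the elements of $\mathfrak S_n$ requiring at least $n-2$ iterations with the subsets of $[n]$ of size $\neq 2$, which would prove~\Cref{conj:A}.

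The hard part will be this last enumeration: understanding precisely how the cycle structure --- and hence the noncrossing projection --- of an element with $\pi_T(w)=c$ transforms under the single step $w\mapsto wc^{-1}$, so as to decide which of these ``$c$-indecomposable'' elements keep exactly one balanced block all the way down to the identity, and then matching the resulting count with $2^n-n-1$. I would pursue in parallel the folding $A_{2n-1}\to B_n$: by~\Cref{thm:pop_embedding}, the elements of $B_n$ with a given forward-orbit size are exactly the $\unfold$-images of the elements of $\mathfrak S_{2n}$ with that same forward-orbit size that happen to lie in $\unfold(B_n)$, so the conjecture would follow from a sufficiently explicit proof of~\Cref{conj:A} together with a description of which long-orbit permutations of $\mathfrak S_{2n}$ commute with the fixed-point-free involution realizing the fold; this reorganizes the difficulty rather than removing it.
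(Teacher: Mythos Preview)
The statement you are addressing is \Cref{conj:B}, which the paper records as a \emph{conjecture}; the paper gives no proof, so there is nothing to compare your approach against.

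On its own merits, your proposal is a reasonable plan but not a proof, and you essentially say so yourself. The opening reductions are correct: the third bullet of \Cref{thm:typeBantiexc} (with $\cyc_{>1}(\pi_T(w))$, as the proof of that theorem actually establishes) shows that an element requiring exactly $2n-2$ iterations must have $\aexc(w)=2n-2$ and lose exactly one antiexceedance per step, and your argument that such a taut $w$ satisfies $\pi_T(w)=c$ is sound (indeed the case $|B|=2n-2$ is already excluded by the trivial observation that the $\prec$-maximum of $B$ cannot be an antiexceedance, so invoking a $B_{n-1}$ subgroup is not even needed). The next step, forcing $\pi_T(w_1)=c$ via the Kreweras-complement computation that $(c')^{-1}c$ is a reflection, is also correct.

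The genuine gap is exactly where you flag it: after the first two steps the block of $\pi_T(w_i)$ is no longer forced to be all of $\pm[n]$, and you do not carry out the promised recursion or bijection. Your phrase ``this makes the count a recursion in the rank'' is a hope, not an argument: once the orbit drops into a proper noncrossing parabolic $W_v$, the remaining steps are governed by $\pop_T$ for that smaller type-$B$ group, but to make the recursion close you would need to control \emph{which} taut elements of $W_v$ actually arise as $w_i$ for some taut $w$ of $B_n$, and you have not done this. The folding approach likewise transfers the difficulty to \Cref{conj:A} (also open in the paper) together with an unproved characterization of which long-orbit elements of $\mathfrak S_{2n}$ lie in $\unfold(B_n)$. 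So the proposal isolates the right structural facts but leaves the enumerative core of the conjecture untouched.
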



\section{Type $D$}
\label{sec:typed}
In this section, we prove the following theorem.

\begin{theorem}\label{thm:main2}
Let $W$ be a Coxeter group of type $D_n$. Then 
\begin{itemize}
    \item $\pop_T^{2n-3}(w)=e$ for all $w\in W$, and
    \item  $\max\limits _{w\in W}\left|O_{\pop_T}(w)\right|=2n-2$.
\end{itemize}
\end{theorem}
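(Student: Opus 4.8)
The second bullet of \Cref{thm:main2} is a consequence of the first: the Coxeter number of $D_n$ is $h=2n-2$, so if $\pop_T^{2n-3}(w)=e$ for every $w$, then every forward orbit has size at most $2n-2$, while \Cref{prop:forward_cinv} supplies an orbit, namely $O_{\pop_T}(c^{-1})$, of size exactly $2n-2$. Thus the real task is to prove $\pop_T^{2n-3}(w)=e$ for all $w\in D_n$, and the plan is to run the antiexceedance argument of \Cref{thm:typeAantiexc} and \Cref{thm:typeBantiexc} in type $D$. (One might hope the folding $B_{n-1}\hookrightarrow D_n$ together with \Cref{thm:pop_embedding} would suffice, but that only governs the dynamics on the proper subgroup $\unfold(B_{n-1})$.)

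First I would realize $D_n$ as the group of signed permutations of $\pm[n]$ with an even number of negative entries, fix a Coxeter element $c$ for which $\NC(D_n,c)$ carries the standard combinatorial model by centrally symmetric planar diagrams, and record that in this model $\leq_T$ on $\NC(D_n,c)$ is reverse refinement of the associated noncrossing set partitions and that $\pi_T(w)$ is the finest $D_n$-noncrossing partition whose blocks contain the cycles of $w$. I would then introduce a total order $\prec$ on $\pm[n]$ adapted to $c$, set $\aexc(w)=\#\{i\colon i\prec w^{-1}(i)\}$, and check the easy facts: $\aexc(w)=0$ iff $w=e$; $\aexc(w)\le 2n-3$ for every $w\in D_n$ (this is where the even-sign constraint defining $D_n$ enters); and $\aexc(c^{-1})=2n-3=h-1$.

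The crux is the type-$D$ analogue of the third bullet of \Cref{thm:typeAantiexc}: a relation of the shape $\aexc(\pop_T(w))=\aexc(w)-\cyc_{>1}(\pi_T(w))$, or in any case the inequality $\aexc(\pop_T(w))\le\aexc(w)-1$ whenever $w\neq e$. The mechanism is the same cycle-chase as in type $A$: if $i$ is not an antiexceedance of $w$, then following $i$ around the cycle $\mathcal{C}$ of $v:=\pi_T(w)$ containing it---using that $\mathcal{C}$ is cyclically ordered along the diagram and contains the cycle of $w$ through $i$---shows $i$ is not an antiexceedance of $\pop_T(w)$; and an antiexceedance $j$ is destroyed exactly when $v^{-1}(j)$ is the $\prec$-largest element of its non-singleton cycle. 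The main obstacle is the genuinely new feature of type $D$: a $D_n$-noncrossing partition may have one centrally symmetric \emph{zero block} passing through the center of the diagram, and such a block is not cyclically ordered along the boundary circle in the naive way. I would handle this by splitting on whether $v$ has a zero block and, if so, tracking how its two halves and the central labels interact with $\prec$; the correct accounting should be that the zero block contributes a single unit of decrease regardless of how many ``ordinary cycles'' it resembles, and making both $\aexc(w)\le 2n-3$ and the transfer relation simultaneously exact is where I expect the real work to lie. This zero-block phenomenon is also exactly why uniqueness fails in type $D$, in contrast to the coincidental types of \Cref{cor:main4}: several elements besides $c^{-1}$ can attain $\aexc=2n-3$, consistent with the seven forward orbits of size $6$ in $D_4$, so the argument yields only the bound $\max_w\lvert O_{\pop_T}(w)\rvert=h$ rather than a unique maximal orbit.

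Granting the transfer relation, $\aexc$ strictly decreases under $\pop_T$ away from the identity, so from any $w$ we descend from $\aexc(w)\le 2n-3$ to $\aexc=0$, i.e.\ to $e$, in at most $2n-3$ steps; hence $\pop_T^{2n-3}(w)=e$ for all $w\in D_n$. Combined with the size-$(2n-2)$ orbit $O_{\pop_T}(c^{-1})$ from \Cref{prop:forward_cinv}, this gives $\max_{w\in W}\lvert O_{\pop_T}(w)\rvert=2n-2$, completing the proof of \Cref{thm:main2}.
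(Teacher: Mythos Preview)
Your proposal is a plan rather than a proof, and the step you yourself flag as ``where I expect the real work to lie'' is exactly the step that does not go through in the naive way. In types $A$ and $B$ the antiexceedance argument works because every non-singleton cycle of a noncrossing partition is cyclically ordered along the boundary circle, so applying $v=\pi_T(w)$ moves every element of a cycle (except its $\prec$-maximum) strictly up in $\prec$. In type $D$ this fails: with $c=(\overline 1\,\cdots\,\overline{n-1}\,1\,\cdots\,n-1)(\overline n\,\, n)$, the labels $n$ and $\overline n$ sit at the \emph{center} of the Athanasiadis--Reiner picture, and a non-zero block through the center is not ordered along the boundary. Concretely, there is no total order $\prec$ on $\pm[n]$ for which one simultaneously has (i) $\aexc(w)\le 2n-3$ for all $w\in D_n$, (ii) cycles of $\pi_T(w)$ are $\prec$-compatible in the sense your cycle-chase needs, and (iii) the transfer relation $\aexc(\pop_T(w))\le\aexc(w)-1$; any placement of $n,\overline n$ in $\prec$ breaks one of these. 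Your paragraph about the zero block does not address the non-zero blocks containing $n$ or $\overline n$, which are the real source of trouble.

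The paper abandons the global statistic and instead proves a per-coordinate statement: for each $j\in\pm[n-1]$ with $u_0^{-1}(j)\notin\{\overline n,n\}$, the sequence $u_k^{-1}(j)$ moves strictly clockwise around the boundary circle \emph{except} possibly for a single step where it visits the center; and when that happens, one shows (using \Cref{Lem3} and the constraint $v_i\le_T v_{i-1}$ from \Cref{lem:monotone}) that the entry point is among the first $n-2$ positions before $j$ and the exit point is among the last $n-2$, so the visit ``skips'' at least $n-1$ boundary positions. A counting argument then forces $u_{2n-4}^{-1}(j)\in\{j,\text{predecessor of }j\}$. This pins down $u_{2n-4}$ on all of $\pm[n-1]\setminus\{m,\overline m\}$ where $m=u_0(n)$, and a short case analysis on the remaining two coordinates shows $u_{2n-4}\in\NC(D_n,c)$, whence $u_{2n-3}=e$. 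The moral is that the type-$D$ argument is local (one coordinate at a time, with a careful treatment of the single center visit) rather than governed by a monotone scalar; your sketch would need essentially this same analysis to fill its gap.
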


As in the previous section, let $\pm[n]=\{\overline n,\ldots,\overline 1,1,\ldots,n\}$ (with bars replacing negative signs). The Coxeter group $D_n$ is the group of permutations $w:\pm[n]\to\pm[n]$ such that $w(\overline i)=\overline{w(i)}$ for all $i\in\pm[n]$ and such that $\abs{\{i\in[n]:w(i)<0\}}$ is even. The reflections in $D_n$ are the elements of the form $(i\,\,j)(\overline i\,\,\overline j)$, where $i,j\in\pm[n]$ and $i\neq\overline j$. Throughout this section, we fix the Coxeter element 
$c=(\overline 1\,\,\overline 2\,\cdots\overline{n-1}\,\,1\,\,2\cdots n-1)(\overline n\,\,n)$. Given a cycle $\mathcal C=(i_1i_2\cdots i_k)$ in an element $w\in D_n$, let $\overline{\mathcal C}$ denote the cycle $(\overline{i_1}\,\,\overline{i_2}\cdots \overline{i_k})$, and note that $\overline{\mathcal C}$ is also a cycle in $w$. We say the cycle $\mathcal C$ is \defn{balanced} if $\mathcal C=\overline{\mathcal C}$. It follows from the definition of $D_n$ that $\mathcal C$ is balanced if and only if there exists an entry $k$ such that $k$ and $\overline k$ are both in $\mathcal C$. A cycle that is not balanced is \defn{unbalanced}. Each element of $D_n$ has an even number of balanced cycles. 

Every element $w\in D_n$ can be written uniquely as \[w=\mathcal C_1\overline{\mathcal C_1}\cdots\mathcal C_r\overline{\mathcal C_r}\mathcal D_1\cdots\mathcal D_s,\] where $\mathcal C_1,\ldots,\mathcal C_r$ are unbalanced cycles, $\mathcal D_1,\ldots,\mathcal D_s$ are balanced cycles, and the cycles appearing in the factorization are pairwise disjoint. The reflection length of $w$ can be computed as \[\ell_T(w)=\sum_{i=1}^r(k_i-1)+\sum_{j=1}^sm_j,\] where $k_i$ is the number of entries in $\mathcal C_i$ (and, therefore, also $\overline{\mathcal C_i}$) and $m_j$ is the number of entries in $\mathcal D_j$ (see \cite[Section~3]{BradyWatt}). The following useful lemma is a consequence of the computations in \cite[Example~3.6]{BradyWatt} (see also \cite[Section~3]{AthanasiadisReiner}).

\begin{lemma}[\cite{BradyWatt}]\label{Lem1}
Let $w\in D_n$, and consider $i,j\in\pm[n]$ with $i\not\in\{\overline j,j\}$. We have $(i\,\,j)(\overline i\,\,\overline j)\leq_T w$ if and only if $i$ and $j$ are in the same cycle of $w$ or $i$ and $j$ are in different balanced cycles of $w$. 
\end{lemma}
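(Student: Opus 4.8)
The statement to prove is \Cref{Lem1}, which characterizes when a reflection $(i\,\,j)(\overline i\,\,\overline j)$ lies weakly below $w$ in the absolute order on $D_n$.

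The plan is to use the Brady--Watt criterion \eqref{EqBradyWatt}: $(i\,\,j)(\overline i\,\,\overline j)\leq_T w$ if and only if $\mov((i\,\,j)(\overline i\,\,\overline j))\subseteq\mov(w)$. So the first step is to get explicit descriptions of these moved spaces in the standard reflection representation of $D_n$ on $\mathbb R^n$ with orthonormal basis $e_1,\dots,e_n$, where $(i\,\,j)(\overline i\,\,\overline j)$ (with $i,j>0$, say) acts by swapping $e_i$ and $e_j$, and $(i\,\,\overline j)(\overline i\,\,j)$ acts by swapping $e_i$ with $-e_j$. Thus $\mov((i\,\,j)(\overline i\,\,\overline j))$ is the line spanned by $e_i-e_j$, while for the ``mixed-sign'' reflection it is the line spanned by $e_i+e_j$; more uniformly, if $i,j\in\pm[n]$ with $i\notin\{j,\overline j\}$, the reflection $(i\,\,j)(\overline i\,\,\overline j)$ has moved space spanned by $\epsilon_i e_{|i|}-\epsilon_j e_{|j|}$, where $\epsilon_k$ records the sign of $k$.

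Next I would compute $\mov(w)$ from the cycle decomposition $w=\mathcal C_1\overline{\mathcal C_1}\cdots\mathcal C_r\overline{\mathcal C_r}\mathcal D_1\cdots\mathcal D_s$. Because moved spaces of a product of commuting elements with complementary supports add, it suffices to treat a single unbalanced pair $\mathcal C\overline{\mathcal C}$ and a single balanced cycle $\mathcal D$. For an unbalanced cycle $\mathcal C$ on support $\{|i|:i\in\mathcal C\}$ (with signs $\epsilon_i$), the moved space of $\mathcal C\overline{\mathcal C}$ is exactly the coordinate subspace spanned by $\{e_{|i|}:i\in\mathcal C\}$ intersected with... actually it is the full coordinate subspace $\spn\{e_{|i|}:i\in\mathcal C\}$ of dimension $k$ (the number of entries), since $\ell_T(\mathcal C\overline{\mathcal C})=k-1$ wait — one must be careful: $\dim\mov = \ell_T$, and $\ell_T(\mathcal C\overline{\mathcal C}) = k-1$, so the moved space has dimension $k-1$, namely the subspace of $\spn\{\epsilon_i e_{|i|}\}$ on which the relevant ``signed sum'' vanishes; the key point is that $\epsilon_i e_{|i|}-\epsilon_j e_{|j|}$ lies in it for all $i,j\in\mathcal C$ but a mixed vector $\epsilon_i e_{|i|}+\epsilon_j e_{|j|}$ does not. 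For a balanced cycle $\mathcal D$ on support of size $m/2$ distinct absolute values, $\ell_T(\mathcal D)=m$ forces $\mov(\mathcal D)$ to be the full coordinate subspace $\spn\{e_{|i|}:i\in\mathcal D\}$. I would extract from \cite[Example~3.6]{BradyWatt} precisely these descriptions. Then checking the containment $\spn\{\epsilon_i e_{|i|}-\epsilon_j e_{|j|}\}\subseteq\mov(w)$ becomes a case analysis on where $i$ and $j$ sit: same unbalanced cycle, different unbalanced cycles (in the same pair or not), same balanced cycle, different balanced cycles, or one balanced and one unbalanced. The containment holds exactly in the two cases named in the lemma.

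The main obstacle, I expect, is the bookkeeping for the unbalanced case: one must verify that $\epsilon_i e_{|i|}-\epsilon_j e_{|j|}\in\mov(\mathcal C\overline{\mathcal C})$ when $i,j$ are in the same unbalanced cycle $\mathcal C$ (so the vector must use the signs as they appear in $\mathcal C$, not their negatives), and simultaneously that it fails when $i\in\mathcal C$ but $j\in\overline{\mathcal C}$ (so that $i$ and $j$ are technically in the same cycle-pair but in opposite halves) — here $i$ and $\overline j$ are in the same cycle $\mathcal C$, so the reflection $(i\,\,\overline j)(\overline i\,\,j)$ is below $w$ but $(i\,\,j)(\overline i\,\,\overline j)$ is not, which is consistent with the lemma's phrasing once one checks that $i$ and $j$ being ``in the same cycle'' is read with the standard convention. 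Rather than redo the Brady--Watt computation from scratch, the cleanest route is to cite \cite[Example~3.6]{BradyWatt} for the moved-space descriptions and \cite[Section~3]{AthanasiadisReiner} for the translation into cycle language, and then simply record the resulting case check. Since the paper already invokes these references, a short proof that assembles \eqref{EqBradyWatt} with those explicit computations should suffice.
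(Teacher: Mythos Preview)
The paper does not give its own proof of this lemma: it is stated with attribution to \cite{BradyWatt}, and the surrounding text says only that it ``is a consequence of the computations in \cite[Example~3.6]{BradyWatt} (see also \cite[Section~3]{AthanasiadisReiner}).'' Your plan---apply the Brady--Watt criterion \eqref{EqBradyWatt} and read off $\mov(w)$ cycle-by-cycle from those references, then do the case check---is exactly the derivation the paper is pointing to, so there is nothing to compare.

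One small correction to your sketch: for a balanced cycle $\mathcal D$ with $m$ entries (hence $m/2$ distinct absolute values), the reflection length contribution is $m/2$, not $m$; equivalently $\dim\mov(\mathcal D)=m/2$, which is precisely the dimension of the full coordinate subspace $\spn\{e_{|i|}:i\in\mathcal D\}$ you want. (You can sanity-check this on the Coxeter element $c$ itself, which has two balanced cycles and must have $\ell_T(c)=n$.) This slip does not affect your conclusion, since you correctly identify the moved space, but the justification ``$\ell_T(\mathcal D)=m$ forces\ldots'' should be adjusted.
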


Our combinatorial description of noncrossing partitions in type~$D$ is based on the work of Athanasiadis and Reiner \cite{AthanasiadisReiner}. Begin by labeling $2n{-}2$ equally-spaced points on a circle with the numbers $\overline 1,\overline 2,\ldots,$ $\overline{n-1},1,2,\ldots,n-1$ in clockwise order. Now place a single extra point at the center of the circle, and label it with both the number $n$ and the number $\overline n$. Consider a set partition $\rho$ of $\pm[n]$; we associate each block of $\rho$ with the set of points labeled by the elements of that block. We say $\rho$ is a \defn{type-$D_n$ noncrossing set partition} if it satisfies the following conditions: 
\begin{enumerate}
\item For every block $B$ of $\rho$, the set $\overline B=\{\overline i:i\in B\}$ is also a block of $\rho$.
\item If $B$ and $B'$ are distinct blocks of $\rho$, then the relative interior of the convex hull of $B$ does not contain a point of the convex hull of $B'$. 
\item The set $\{\overline n,n\}$ is not a block of $\rho$. 
\end{enumerate}
We associate each type-$D_n$ noncrossing set partition $\rho$ with the diagram showing the labeled points on the circle and at the center of the circle together with the convex hulls of the blocks of $\rho$. A \defn{zero block} of $\rho$ is a block $Z$ of $\rho$ such that $Z=\overline Z$. Note that $\rho$ has at most $1$ zero block; moreover, if $\rho$ does have a zero block $Z$, then $Z$ must properly contain the set $\{\overline n,n\}$. 

Suppose $\rho$ is a type-$D_n$ noncrossing set partition. For each block $B$ of $\rho$ that is not a zero block, form a cycle by reading the elements of $B$ in clockwise order around the boundary of the convex hull of $B$. If $\rho$ has a zero block $Z$, then form the cycle $(\overline n\,\, n)$, and form another cycle by reading the elements of $Z\setminus\{\overline n,n\}$ in clockwise order around the boundary of the convex hull of $Z$. Multiplying all of these cycles together forms the disjoint cycle decomposition of a permutation $g(\rho)$ of $\pm[n]$. Athanasiadis and Reiner \cite{AthanasiadisReiner} proved that $g(\rho)$ is actually a noncrossing partition in $\NC(D_n,c)$ and that the map $g$ provides a bijection from the set of type-$D_n$ noncrossing set partitions to the set $\NC(D_n,c)$.

\begin{figure}[htbp]
\begin{center} 
\includegraphics[height=4cm]{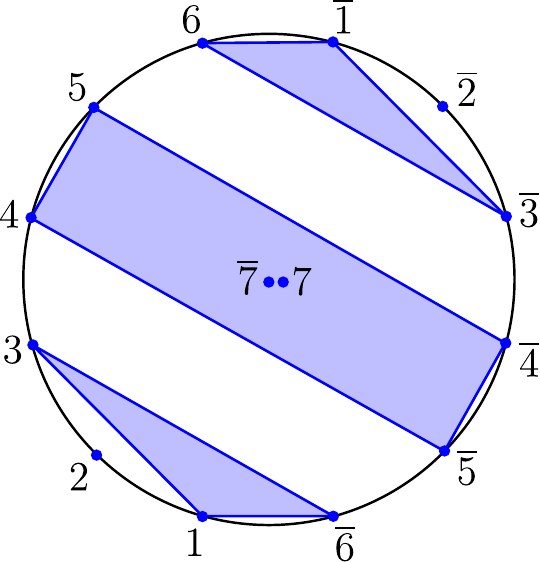}\qquad\qquad\includegraphics[height=4cm]{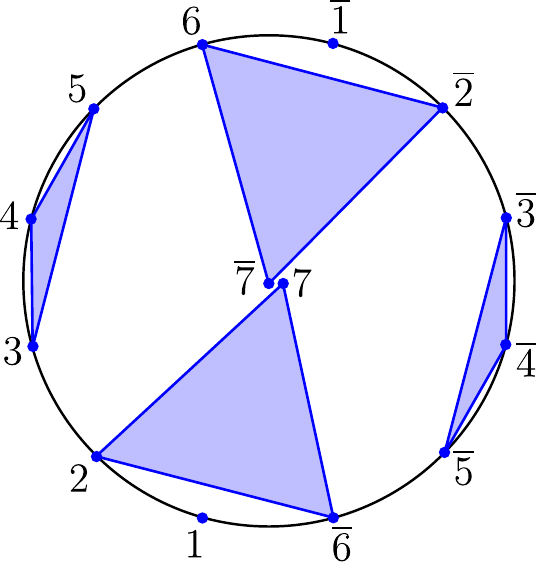}
\end{center}
\caption{Two type-$D_7$ noncrossing set partitions. Technically, $\overline 7$ and $7$ should both label a single point at the center of the circle, but we have drawn two points very close to each other in order to distinguish between set partitions whose diagrams would otherwise look identical. The type-$D_7$ noncrossing set partition on the left corresponds to the noncrossing partition $(\overline 1\,\,\overline 3\,\,6)(1\,\,3\,\,\overline 6)(\overline 2)(2)(\overline 4\,\,\overline 5\,\,4\,\,5)(\overline 7\,\,7)$, while the one on the right corresponds to $(\overline 1)(1)(\overline 2\,\,\overline 7\,\,6)(2\,\,7\,\,\overline 6)(\overline 3\,\,\overline 4\,\,\overline 5)(3\,\,4\,\,5)$.}\label{FigTsack4}
\end{figure}

\begin{lemma}\label{Lem3}
Suppose $v\in \NC(D_n,c)$ and $k\in\pm[n-1]$ are such that $v(k)\in\{\overline n,n\}$. Let $Y$ be the set consisting of the next $n-1$ numbers after $k$ in clockwise order around the circle. Then none of the elements of $Y$ lie in the same cycle as $k$ in $v$.  
\end{lemma}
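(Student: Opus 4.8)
The plan is to pass to the combinatorial model of type-$D_n$ noncrossing set partitions and then argue geometrically on the circle. Write $\rho$ for the type-$D_n$ noncrossing set partition with $g(\rho)=v$, and let $B$ be the block of $\rho$ one of whose cycles (under the bijection $g$) is the cycle of $v$ containing $k$. Because $v(k)\in\{\overline n,n\}$, that cycle also contains the center point $O$ (the point labeled both $n$ and $\overline n$). In particular $B$ cannot be the zero block $Z$ of $\rho$: if it were, then since $k\notin\{\overline n,n\}$ the element $k$ would lie in the cycle of $v$ coming from $Z\setminus\{\overline n,n\}$, which contains neither $n$ nor $\overline n$, contradicting $v(k)\in\{\overline n,n\}$. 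Hence $B$ is a non-zero block containing $k$ and a label of $O$, so $B\neq\overline B$, and by condition~(1) the set $\overline B$ is a block of $\rho$ distinct from $B$, hence disjoint from $B$.

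Since the circle carries $2n-2$ points and $\overline j$ is antipodal to $j$ for every $j\in[n-1]$, the last of the $n-1$ numbers following $k$ clockwise is exactly $\overline k$. Now $\overline k\in\overline B$ and $B\cap\overline B=\varnothing$, so $\overline k$ does not lie in the cycle of $v$ containing $k$. It therefore remains to show that the first $n-2$ numbers following $k$ clockwise---call this set $Y'$, so $Y=Y'\cup\{\overline k\}$---are also not in that cycle, equivalently that $Y'\cap B=\varnothing$.

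This is the geometric core. Let $L$ be the diameter of the circle through $k$, $O$, and $\overline k$; it splits the disk into two closed half-disks, and all of $Y'$ lies in the interior of one of them. First, $O$ lies on the boundary of $\mathrm{conv}(B)$: otherwise $O$ would lie in the relative interior of $\mathrm{conv}(B)$ while also lying in $\mathrm{conv}(\overline B)$ (as $O\in\overline B$), contradicting condition~(2) for the distinct blocks $B,\overline B$. The point $k$ also lies on the boundary of $\mathrm{conv}(B)$, being a point of the unit circle contained in $\mathrm{conv}(B)$. Because $v(k)\in\{\overline n,n\}$, the point $O$ is the cyclic successor of $k$ in the clockwise boundary reading of $B$ used to define $g$; consequently the segment from $k$ to $O$ is (part of) an edge of $\mathrm{conv}(B)$, so $\mathrm{conv}(B)$ lies entirely in one of the two closed half-disks determined by $L$. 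A check of the clockwise orientation along this edge---placing $O$ at the origin and $k=(-1,0)$, so that $Y'$ fills the open upper half-disk, the traversal $k\to O$ points east, and ``clockwise boundary, region on the right'' puts $\mathrm{conv}(B)$ in the closed lower half-disk---shows that $\mathrm{conv}(B)$ lies in the half-disk disjoint from $Y'$. Since $B\subseteq\mathrm{conv}(B)$, we get $Y'\cap B=\varnothing$, completing the proof. The degenerate cases, such as $\lvert B\rvert=2$ or several points of $B$ being collinear with $O$, are immediate.

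The step I expect to be the main obstacle is the passage from ``$v(k)\in\{\overline n,n\}$'' to ``$kO$ is an edge of $\mathrm{conv}(B)$, traversed in the correct direction.'' One must extract from the Athanasiadis--Reiner cycle-reading convention that, for a non-zero block, the center point cannot sit in the interior of that block's hull (this is precisely why $O$ is a hull vertex adjacent to $k$), and then keep the clockwise bookkeeping consistent across three things: the clockwise labeling of the circle, the clockwise reading that defines $g$, and the side of $L$ on which $\mathrm{conv}(B)$ sits. Once that is pinned down, everything else is routine.
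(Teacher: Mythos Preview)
Your proof is correct and follows the same approach as the paper: pass to the Athanasiadis--Reiner model via $g$, observe that the block $B$ carrying the cycle of $k$ is not a zero block, and then use that the cycle is obtained by reading $\partial\,\mathrm{conv}(B)$ clockwise. The paper compresses the entire geometric argument (that $O$ is a boundary vertex of $\mathrm{conv}(B)$, that $kO$ is an edge, and the orientation check placing $\mathrm{conv}(B)$ on the side of the diameter opposite $Y$) into the single phrase ``this immediately implies $\mathcal C\cap Y=\emptyset$''; you have simply written out what that phrase is hiding.
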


\begin{proof}
Let $\mathcal C$ be the cycle in $v$ containing $k$. We can write $v=g(\rho)$ for some type-$D_n$ noncrossing set partition $\rho$. The entries in $\mathcal C$ form a block $B$ of $\rho$ that is not a zero block. By the definition of $g$, the cycle $\mathcal C$ is formed by reading the elements of $B$ in clockwise order around the boundary of the convex hull of $B$. This immediately implies that $\mathcal C\cap Y=\emptyset$.  
\end{proof}

\begin{lemma}\label{Lem2}
Let $u\in D_n$, and let $v=\pi_T(u)$. Consider distinct $i,j\in\pm[n-1]$. If $i$ and $j$ are in the same cycle of $u$, then they are in the same cycle of $v$.   
\end{lemma}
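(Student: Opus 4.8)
The plan is to shuttle reflections back and forth between $u$ and $v=\pi_T(u)$ using \Cref{Lem1}. The key preliminary observation is that every reflection $t$ of $D_n$ satisfies $t\leq_T c$ (by \eqref{EqBradyWatt} this only requires $\mov(t)\subseteq\mov(c)$, and $\mov(c)=V$ since $c$ has no nonzero fixed vector), so every reflection is a noncrossing partition; consequently, if $t\leq_T u$ then $t\leq_T v$ directly from the definition $v=\bigvee_{t\leq_T u}t$. I would also record one structural fact about the model for $\NC(D_n,c)$ recalled in this section, extracted from the map $g$: a noncrossing partition has at most two balanced cycles, and if it has exactly two, then one of them is the cycle $(\overline n\,\,n)$ (indeed, balanced cycles of $g(\rho)$ can arise only from a zero block, which contributes exactly $(\overline n\,\,n)$ together with one further balanced cycle).

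First I would treat the generic case $i\neq\overline j$. Since $i$ and $j$ lie in a common cycle of $u$, \Cref{Lem1} gives $(i\,\,j)(\overline i\,\,\overline j)\leq_T u$, hence $(i\,\,j)(\overline i\,\,\overline j)\leq_T v$. Applying \Cref{Lem1} to $v$, either $i$ and $j$ lie in the same cycle of $v$ (which is what we want), or they lie in two distinct balanced cycles of $v$. To exclude the second alternative I would invoke the structural fact above: since $i,j\in\pm[n-1]$, neither of them lies in $(\overline n\,\,n)$, so if each lay in a balanced cycle of $v$ then both would lie in the single remaining balanced cycle, contradicting that they lie in distinct ones.

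The case $i=\overline j$ is where the real work lies, and I expect it to be the main obstacle, because the reflection $(i\,\,j)(\overline i\,\,\overline j)$ degenerates when $j=\overline i$. Here the common cycle $\mathcal C$ of $u$ containing $i$ and $\overline i$ is balanced, so by the parity constraint (an element of $D_n$ has an even number of balanced cycles) there is a second balanced cycle $\mathcal D$; fix any $b\in\mathcal D$. Then $b\neq\pm i$, and since $i$ and $\overline i$ lie in a balanced cycle of $u$ distinct from the balanced cycle $\mathcal D$ containing $b$, \Cref{Lem1} yields both $(i\,\,b)(\overline i\,\,\overline b)\leq_T u$ and $(\overline i\,\,b)(i\,\,\overline b)\leq_T u$, hence both $\leq_T v$. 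It then suffices to prove the following internal statement about $v$: if $v\in\NC(D_n,c)$ and $(i\,\,b)(\overline i\,\,\overline b)\leq_T v$ and $(\overline i\,\,b)(i\,\,\overline b)\leq_T v$ with $i\neq\pm b$, then $i$ and $\overline i$ lie in the same cycle of $v$. For this, let $A$ be the cycle of $v$ through $i$; then $\overline A$ is the cycle of $v$ through $\overline i$. If $A=\overline A$ we are done, so assume $A\neq\overline A$, so that $A$ and $\overline A$ are distinct, hence disjoint, unbalanced cycles. Applying \Cref{Lem1} to $v$ and $(i\,\,b)(\overline i\,\,\overline b)$: since the cycle $A$ of $i$ is not balanced, $b$ must lie in the same cycle as $i$, i.e. $b\in A$; applying \Cref{Lem1} to $v$ and $(\overline i\,\,b)(i\,\,\overline b)$ in the same way gives $b\in\overline A$. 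Then $b\in A\cap\overline A=\emptyset$, a contradiction; hence $A=\overline A$ is a balanced cycle containing both $i$ and $\overline i$, which finishes the proof.
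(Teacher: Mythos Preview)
Your proof is correct and follows essentially the same approach as the paper. Both split into the cases $i\neq\overline j$ and $i=\overline j$, push reflections from $u$ up to $v=\pi_T(u)$, and then use \Cref{Lem1} together with the fact that a noncrossing $v$ has at most one balanced cycle besides $(\overline n\,\,n)$; in the $i=\overline j$ case you and the paper both pick a witness $b$ (the paper calls it $k$) from a second balanced cycle of $u$ and use the two reflections $(i\,\,b)(\overline i\,\,\overline b)$ and $(\overline i\,\,b)(i\,\,\overline b)$ to force the cycle of $i$ in $v$ to be balanced---your phrasing ``$b\in A\cap\overline A$'' and the paper's ``the cycle of $i$ contains $k$ and $\overline k$'' are two sides of the same contradiction.
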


\begin{proof}
Suppose first that $i\neq \overline j$. It follows from \Cref{Lem1} that $(i\,\,j)(\overline i\,\,\overline j)\leq_T u$, so $(i\,\,j)(\overline i\,\,\overline j)\leq_T v$. Since $v\in\NC(D_n,c)$ is noncrossing, it has at most $1$ balanced cycle other than $(\overline n\,\,n)$. This means that $i$ and $j$ cannot belong to different balanced cycles of $v$, so they must belong to the same cycle of $v$ by \Cref{Lem1}. 

Now suppose $i=\overline j$. This means $i$ is in a balanced cycle in $u$. We want to show that $i$ is in a balanced cycle of $v$. Suppose, by way of contradiction, that $i$ is in an unbalanced cycle of $v$. Each element of $D_n$ has an even number of balanced cycles, so there is some balanced cycle in $u$ that does not contain $i$; let $k$ be one of the numbers in this other balanced cycle. Since $i$ and $k$ are in different balanced cycles in $u$, we have $(i\,\,k)(\overline i\,\,\overline k)\leq_T u$ by \Cref{Lem1}. Therefore, $(i\,\,k)(\overline i\,\,\overline k)\leq_T v$. Because $i$ is in an unbalanced cycle in $v$, \Cref{Lem1} tells us that $i$ and $k$ are in the same cycle in $v$. Since $\overline k$ is in the same balanced cycle in $u$ as $k$, the same argument we have just given shows that $i$ and $\overline k$ are in the same cycle in $v$. However, this means that the cycle in $v$ containing $i$ contains both $k$ and $\overline k$, so it is balanced. This is a contradiction.     
\end{proof}

Our goal in the remainder of this section is to prove \Cref{thm:main2}. The Coxeter number of $D_n$ is $2n{-}2$. We already know by \Cref{prop:forward_cinv} that $O_{\pop_T}(c^{-1})$ is a forward orbit of $\pop_T:D_n\to D_n$ of size $2n{-}2$. Thus, we need to prove that $\pop_T^{2n-3}(u_0)=e$ for all $u_0\in D_n$. 

Fix $u_0\in D_n$. For $k\geq 0$, let $u_k=\pop_T^k(u_0)$ and $v_k=\pi_T(u_k)$. Thus, $u_{k+1}=u_kv_k^{-1}$. Our goal is to show that $u_{2n-3}=e$. Since $\pop_T^{-1}(e)=\NC(D_n,c)$, it suffices to show that $u_{2n-4}\in\NC(D_n,c)$. The following lemma will do all of the heavy lifting. When we refer to ``the circle,'' we mean the circle that has $2n{-}2$ equally-spaced points labeled with the numbers $\overline 1,\overline 2,\ldots,\overline{n-1},1,2,\ldots,n-1$ in clockwise order and that has its center labeled with the numbers $\overline n$ and $n$. Define the \defn{predecessor} of an element $j\in\pm[n-1]$ to the be number that appears immediately before $j$ in the clockwise order of the circle. For example, the predecessor of $2$ is $1$, and the predecessor of $\overline 1$ is $n-1$.  

\begin{lemma}\label{lem:DMainLemma}
Suppose $j\in\pm[n-1]$ is such that $u_0^{-1}(j)\not\in\{\overline n,n\}$. Then $u_{2n-4}^{-1}(j)$ is either $j$ or the predecessor of $j$. 
\end{lemma}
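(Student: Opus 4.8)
The plan is to follow the single element $a_k:=u_k^{-1}(j)$ along the orbit. Since $u_{k+1}=u_kv_k^{-1}$ with $v_k=\pi_T(u_k)\in\NC(D_n,c)$, we have $a_{k+1}=v_k(a_k)$, and the hypothesis says precisely that $a_0$ is one of the $2n-2$ points on the outer circle rather than the central point $\pm n$. So I want to show that $a_k$ marches clockwise toward $j$ and, after $2n-4$ steps, has reached $j$ or the point $\mathrm{pred}(j)$ immediately counterclockwise of $j$.

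The engine is a potential function. For an outer point $x$, let $d(x,j)\in\{0,1,\dots,2n-3\}$ be the clockwise distance from $x$ to $j$ (so $d(x,j)=0$ iff $x=j$ and $d(x,j)=1$ iff $x=\mathrm{pred}(j)$), and set $\Phi_k=d(a_k,j)$ if $a_k$ is an outer point and $\Phi_k=n-1$ if $a_k\in\{\overline n,n\}$. Then $\Phi_0\le 2n-3$ and $\Phi_k=0$ iff $a_k=j$. I would prove that $\Phi_{k+1}\le\Phi_k-1$ whenever $\Phi_k\ge 1$, and $\Phi_{k+1}=0$ whenever $\Phi_k=0$. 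Granting this, an immediate induction gives $\Phi_k\le\max(0,\Phi_0-k)$, so $\Phi_{2n-4}\le 1$; since for $n\ge 3$ the central point has potential $n-1\ge 2$, the value $a_{2n-4}$ must be an outer point at clockwise distance $\le 1$ from $j$, i.e. $a_{2n-4}\in\{j,\mathrm{pred}(j)\}$ (the degenerate cases $D_2$, reducible, and $D_3\cong A_3$ being covered elsewhere).

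The step-wise estimate splits by the position of $a_k$. If $a_k=j$, then $u_k$ fixes the outer point $j$; because an outer circle point can never lie in the convex hull of another block, minimality of $\pi_T(u_k)$ keeps $\{j\}$ a singleton of $v_k$, so $a_{k+1}=v_k(j)=j$. If $a_k$ is an outer point with $a_k\ne j$, then $u_k(a_k)=j$ places $a_k,j$ in a common cycle of $u_k$, hence by \Cref{Lem2} in a common cycle $\mathcal C$ of $v_k$; reading $\mathcal C$ clockwise, $a_{k+1}=v_k(a_k)$ is the element of $\mathcal C$ immediately clockwise after $a_k$, and continuing clockwise within $\mathcal C$ one reaches $j$ before returning to $a_k$, so $\Phi_{k+1}=\Phi_k-d(a_k,a_{k+1})\le\Phi_k-1$ --- unless $a_{k+1}\in\{\overline n,n\}$, in which case \Cref{Lem3} applied to $a_k$ (whose $v_k$-cycle contains $j$) shows $j$ is not among the next $n-1$ points clockwise after $a_k$, so $\Phi_k=d(a_k,j)\ge n>n-1=\Phi_{k+1}$. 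Finally, if $a_k=\overline n$ (the case $a_k=n$ being symmetric): a short induction shows the trajectory reaches $\{\overline n,n\}$ only from an outer point, so $a_k=v_{k-1}(a_{k-1})$ with $a_{k-1}$ outer and the $v_{k-1}$-cycle of $a_{k-1}$ unbalanced; this forces the cycle of $u_k$ through $\overline n$ to be unbalanced and to contain $j$, hence (a noncrossing partition of type $D_n$ has at most one balanced cycle besides $(\overline n\,n)$, and its zero block, when present, contains neither $\overline n$ nor $n$) $v_k$ has no zero block and $a_{k+1}=v_k(\overline n)$ is an outer point lying in the same $v_k$-cycle as $j$; writing $r=v_k^{-1}(\overline n)$ and applying \Cref{Lem3} to $r$, all outer points of that cycle lie in the arc of $n-1$ positions ending at $r$, and a short computation gives $\Phi_{k+1}=d(a_{k+1},j)\le n-2=\Phi_k-1$.

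The part I expect to be the main obstacle is exactly this last regime: controlling what the trajectory does at the central point $\pm n$. The danger is that the orbit could stall there --- for instance by bouncing $\overline n\mapsto n\mapsto\overline n\mapsto\cdots$, which would leave $\Phi$ unchanged and destroy the count. Ruling this out is where one genuinely needs \Cref{Lem3}, the Athanasiadis--Reiner model of $\NC(D_n,c)$, and the hypothesis $u_0^{-1}(j)\notin\{\overline n,n\}$ (which guarantees that the orbit always arrives at $\pm n$, if at all, from an outer point): together these show that each visit to $\pm n$ is an isolated ``outer $\to\pm n\to$ outer'' excursion across which $\Phi$ drops by at least $2$. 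Everything else is routine bookkeeping with the clockwise-reading description of the cycles of a noncrossing partition.
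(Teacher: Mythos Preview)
Your potential-function approach is essentially a repackaging of the paper's argument: the paper tracks $a_k=u_k^{-1}(j)$ and shows it strictly increases in the cyclic order $\prec_j$ (when on the outer circle), with at most one detour through the center; your $\Phi_k$ is precisely $2n-2$ minus the $\prec_j$-rank of $a_k$, so the two arguments coincide up to bookkeeping. Your assignment $\Phi=n-1$ at the center is a clean way to fold the central excursion into the monotone framework, and your ``short induction'' (the trajectory reaches $\pm n$ only from an outer point because the inductive hypothesis gives $\Phi_{k-1}\ge\Phi_k+1=n>n-1$) is valid.

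There is, however, a real gap in your central case. You assert that because the $v_{k-1}$-cycle of $a_{k-1}$ is unbalanced, ``the cycle of $u_k$ through $\overline n$ [is] unbalanced,'' and hence ``$v_k$ has no zero block.'' The second implication does not follow: knowing that a particular $u_k$-cycle is unbalanced does not, by itself, prevent $\pi_T(u_k)$ from acquiring a zero block, since the noncrossing closure can merge blocks in ways that create balanced cycles. (Your parenthetical ``its zero block, when present, contains neither $\overline n$ nor $n$'' is also false; a zero block always contains both.) What you actually need is the monotonicity $v_k\le_T v_{k-1}$ from \Cref{lem:monotone}: since $v_{k-1}$ has no balanced cycles (the central point lies in an unbalanced cycle of $v_{k-1}$, and any noncrossing partition with a balanced cycle must have $(\overline n\,\,n)$ among them), the Athanasiadis--Reiner description of covers in $\NC(D_n,c)$ forces every unbalanced cycle of $v_{k-1}$ to decompose into unbalanced cycles of $v_k$, so $v_k$ has no balanced cycles either. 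This is exactly what the paper does. Once you have that, your application of \Cref{Lem3} at $r=v_k^{-1}(\overline n)$ correctly yields $\Phi_{k+1}\le n-2$, and the rest of your argument goes through.
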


\begin{proof}
Define a total order $\prec$ on $\pm[n-1]$ by $\overline 1\prec\cdots\prec\overline{n-1}\prec 1\prec\cdots\prec n-1$. Define $\prec_j$ to be the total order on $\pm[n-1]$ obtained by cyclically shifting the total order $\prec$ so that the maximum element in the order $\prec_j$ is $j$. In other words, $\prec_j$ is given by reading the numbers clockwise around the circle so that the last number read is $j$. Given a set $Q\subseteq\pm[n-1]$, we write $i\prec_j Q$ (respectively, $Q\prec_J i$) to mean that $i\prec_j q$ (respectively, $q\prec_j i$) for all $q\in Q$. 

If $u_k^{-1}(j)=j$ for some $0\leq k\leq 2n-4$, then $u_{k'}^{-1}(j)=j$ for all $k'\geq k$. In particular, $u_{2n-4}^{-1}(j)=j$ in this case. Thus, we may assume $u_k^{-1}(j)\neq j$ for all $0\leq k\leq 2n-4$.  

Suppose $k\in\{0,\ldots,2n-4\}$ is such that $u_k^{-1}(j)\not\in\{\overline n,n\}$ and $u_{k+1}^{-1}(j)=v_k(u_k^{-1}(j))\not\in\{\overline n,n\}$. Let $\mathcal C$ be the cycle in $v_k$ containing $j$. Since $u_k^{-1}(j)$ and $j$ are in the same cycle in $u_k$, \Cref{Lem2} tells us that $u_k^{-1}(j)\in\mathcal C$. Because $v_k$ is a noncrossing partition, the cycle $\mathcal C$ is ordered clockwise. Consequently, $u_{k+1}^{-1}(j)$ is the element of $\mathcal C$ that occurs right after $u_k^{-1}(j)$ in the clockwise cyclic order. Since $u_k^{-1}(j)\neq j$, we must have $u_k^{-1}(j)\prec_j u_{k+1}^{-1}(j)$.

If there does not exist an integer $i\in\{0,\ldots,2n-4\}$ such that $u_i^{-1}(j)\in\{\overline n,n\}$, then it follows from the preceding paragraph that $u_0^{-1}(j)\prec_j u_1^{-1}(j)\prec_j\cdots\prec_j u_{2n-4}^{-1}(j)$. We have assumed $u_{2n-4}^{-1}(j)\neq j$, so it follows that $u_{2n-4}^{-1}(j)$ is the predecessor of $j$, as desired. 

We may now assume there exists $i\in\{0,\ldots,2n-4\}$ with $u_i^{-1}(j)\in\{\overline n,n\}$. We will actually show that this case cannot occur because it contradicts our assumption that $u_{k}^{-1}(j)\neq j$ for all $0\leq k\leq 2n-4$. Since $u_0^{-1}(j)\not\in\{\overline n,n\}$ by the hypothesis of the lemma, we must have $i\geq 1$ and $u_{i-1}^{-1}(j)\not\in\{\overline n,n\}$. Let $\mathcal C_{i-1}$ and $\mathcal C_i$ be the cycles in $v_{i-1}$ and $v_i$, respectively, that contain $j$. 

Let $Y$ be the set consisting of the next $n-1$ numbers after $u_{i-1}^{-1}(j)$ in clockwise order around the circle. Because $v_{i-1}(u_{i-1}^{-1}(j))=u_i^{-1}(j)\in\{\overline n,n\}$, we can use \Cref{Lem3} to see that none of the elements of $Y$ lie in the same cycle of $v_{i-1}$ as $u_{i-1}^{-1}(j)$. Since $j$ and $u_{i-1}^{-1}(j)$ are in the same cycle of $u_{i-1}$, we know by \Cref{Lem2} that $j$ and $u_{i-1}^{-1}(j)$ are in the same cycle of $v_{i-1}$. In other words, $u_{i-1}^{-1}(j)\in\mathcal C_{i-1}$, and $\mathcal C_{i-1}\cap Y=\emptyset$. Hence, $j\not\in Y$. This implies that $u_{i-1}^{-1}(j)\prec_j Y\prec_j j$. In particular, $u_{i-1}^{-1}(j)$ must be among the smallest $n-2$ numbers in the order $\prec_j$. We also know that $u_i^{-1}(j)\in\mathcal C_{i-1}$ because $u_i^{-1}(j)=v_{i-1}(u_{i-1}^{-1}(j))$. Since $u_i^{-1}(j)\in\{\overline n,n\}$, this implies that $\mathcal C_{i-1}$ is an unbalanced cycle in $v_{i-1}$. In fact, this shows that $v_{i-1}$ has no balanced cycles (if it did, then by the definition of the bijection $g$, one of the balanced cycles of $v_{i-1}$ would be $(\overline n\,\, n)$).  

Athanasiadis and Reiner \cite{AthanasiadisReiner} gave an explicit combinatorial description of the covering relations in $\NC(D_n,c)$; it follows from that description that if $x,y\in\NC(D_n,c)$ are such that $x\leq_T y$, then every unbalanced cycle in $y$ is a union (as a set) of unbalanced cycles in $x$. According to \Cref{lem:monotone}, we have $v_i\leq_T v_{i-1}$. Because $v_{i-1}$ has no balanced cycles, neither does $v_i$. Furthermore, $\mathcal C_i\subseteq \mathcal C_{i-1}$.  Since $j$ and $u_i^{-1}(j)$ are in the same cycle in $u_i$, we have $(j\,\,u_i^{-1}(j))(\overline j\,\,\overline{u_i^{-1}(j)})\leq_T u_i$ by \Cref{Lem1}. Therefore, $(j\,\,u_i^{-1}(j))(\overline j\,\,\overline{u_i^{-1}(j)})\leq_T v_i$. Since $v_i$ has no balanced cycles, we can use \Cref{Lem1} again to see that $j$ and $u_i^{-1}(j)$ are in the same unbalanced cycle in $v_i$. That is, $u_i^{-1}(j)\in\mathcal C_i$. Now, $u_{i+1}^{-1}(j)=v_i(u_i^{-1}(j))$ is the element of $\mathcal C_i$ that occurs immediately after $u_i^{-1}(j)$ in the clockwise cyclic order, so $Y\prec_j u_{i+1}^{-1}(j)$. Hence, $u_{i+1}^{-1}(j)$ is among the largest $n-2$ elements of $\pm[n-1]$ in the order $\prec_j$.  

We have shown that if $i\in\{1,\ldots,2n-4\}$ is such that $u_{i}^{-1}(j)\in\{\overline n,n\}$ and $u_{i-1}^{-1}(j)\not\in\{\overline n,n\}$, then $u_{i-1}^{-1}(j)$ is among the smallest $n-2$ elements of $\pm[n-1]$ in the order $\prec_j$ while $u_{i+1}^{-1}(j)$ is among the largest $n-2$ elements. We have also seen that if $k\in\{0,\ldots,2n-4\}$ is such that $u_k^{-1}(j)\not\in\{\overline n,n\}$, then $u_k^{-1}(j)\prec_j u_{k+1}^{-1}(j)$. We assumed in the hypothesis of the lemma that $u_0^{-1}(j)\not\in\{\overline n,n\}$. It follows from these facts that $u_k^{-1}(j)\not\in\{\overline n,n\}$ for every $k\in\{0,\ldots,2n-4\}\setminus\{i\}$. Therefore, the sequence $u_0^{-1}(j),u_1^{-1}(j),\ldots,u_{i-1}^{-1}(j),u_{i+1}^{-1}(j),\ldots,u_{2n-4}^{-1}(j)$ is strictly increasing in the order $\prec_j$. However, this sequence does not include any of the $n-1$ elements of $Y$ because $u_{i-1}^{-1}(j)\prec_j Y\prec_j u_{i+1}^{-1}(j)$. This is our desired contradiction. 
\end{proof}

\begin{proof}[Proof of \Cref{thm:main2}]
Earlier, we chose an arbitrary $u_0\in D_n$ and let $u_k=\pop_T^k(u_0)$. Our goal is to prove that $u_{2n-3}=e$; in order to do so, it suffices to show that $u_{2n-4}\in\NC(D_n,c)$. Let $m=u_0(n)$. \Cref{lem:DMainLemma} tells us that if $j\in\pm[n-1]\setminus\{\overline m,m\}$, then $u_{2n-4}^{-1}(j)$ is either $j$ or the predecessor of $j$. We now consider several cases. 


\medskip

\noindent {\bf Case 1.} Suppose $u_{2n-4}^{-1}(m)=m$. Then $u_{2n-4}^{-1}(\overline m)=\overline m$. If $m\not\in\{\overline n,n\}$, then one can straightforwardly deduce from \Cref{lem:DMainLemma} that $u_{2n-4}=e\in\NC(D_n,c)$. If $m\in\{\overline n,n\}$, then one can use \Cref{lem:DMainLemma} to see that $u_{2n-4}$ is either $e$ or $c$; in either case, $u_{2n-4}\in\NC(D_n,c)$. 

\medskip

\noindent {\bf Case 2.} Suppose that $u_{2n-4}^{-1}(m)\not\in\{\overline n,n,m\}$ and that $\overline m, u_{2n-4}^{-1}(m),m$ appear in this (cyclic) order when we read the numbers on the circle in clockwise order. Let $Z$ be the set of numbers that either appear between $\overline m$ and $u_{2n-4}^{-1}(m)$ (including $\overline m$ and $u_{2n-4}^{-1}(m)$) or appear between $m$ and $\overline{u_{2n-4}^{-1}(m)}$ (including $m$ and $\overline{u_{2n-4}^{-1}(m)}$) in the clockwise order on the circle. Using \Cref{lem:DMainLemma}, it is straightforward to check that $u_{2n-4}$ cyclically permutes the elements of $Z$ in clockwise order and fixes each element of $\pm[n-1]\setminus Z$ (see \Cref{FigTsack5}). Since $u_{2n-4}\in D_n$, we must have $u_{2n-4}(n)=\overline n$ and $u_{2n-4}(\overline n)=n$. Therefore, $u_{2n-4}=g(\rho)$, where $\rho$ is the type-$D_n$ noncrossing set partition whose blocks are $Z\cup\{\overline n,n\}$ and all of the singleton subsets of $\pm[n-1]\setminus Z$. Thus, $u_{2n-4}\in\NC(D_n,c)$. 

\medskip

\noindent {\bf Case 3.} Suppose that $u_{2n-4}^{-1}(m)\not\in\{\overline n,n,m\}$ and that $m, u_{2n-4}^{-1}(m),\overline m$ appear in this (cyclic) order when we read the numbers on the circle in clockwise order. Let $B$ be the set of numbers that appear between $m$ and $u_{2n-4}^{-1}(m)$ (including $m$ and $u_{2n-4}^{-1}(m)$) in the clockwise order on the circle. Using \Cref{lem:DMainLemma}, it is straightforward to check that $u_{2n-4}$ cyclically permutes the elements of $B$ in clockwise order, cyclically permutes the elements of $\overline B$ in clockwise order, and fixes each element of $\pm[n-1]\setminus (B\cup(\overline B))$ (see \Cref{FigTsack5}). Since $u_{2n-4}$ is in $ D_n$, it must fix $n$ and $\overline n$. Therefore, $u_{2n-4}=g(\rho)$, where $\rho$ is the type-$D_n$ noncrossing set partition whose blocks are $B$, $\overline B$, and all of the singleton subsets of $\pm[n]\setminus (B\cup(\overline B))$. Thus, $u_{2n-4}\in\NC(D_n,c)$.

\medskip

\noindent {\bf Case 4.} Suppose that $u_{2n-4}^{-1}(m)\in\{\overline n,n\}$ and either that $u_{2n-4}^{-2}(m)=m$ or that the numbers $m,u_{2n-4}^{-2}(m),\overline m$ are distinct and appear in this (cyclic) order when we read the numbers on the circle in clockwise order. Let $B$ be the set of numbers that appear between $m$ and $u_{2n-4}^{-2}(m)$ (including $m$ and $u_{2n-4}^{-1}(m)$) in the clockwise order on the circle. Using \Cref{lem:DMainLemma}, it is straightforward to check that $u_{2n-4}$ cyclically permutes the elements of $B\cup\{u_{2n-4}^{-1}(m)\}$ in clockwise order, cyclically permutes the elements of $\overline B\cup\{\overline u_{2n-4}^{-1}(m)\}$ in clockwise order, and fixes each element of $\pm[n-1]\setminus (B\cup(\overline B))$ (see \Cref{FigTsack5}). Therefore, $u_{2n-4}=g(\rho)$, where $\rho$ is the type-$D_n$ noncrossing set partition whose blocks are $B\cup\{u_{2n-4}^{-1}(m)\}$, $\overline B\cup\{\overline{u_{2n-4}^{-1}(m)}\}$, and all of the singleton subsets of $\pm[n-1]\setminus (B\cup(\overline B))$. Thus, $u_{2n-4}\in\NC(D_n,c)$. 

\medskip 

\noindent {\bf Case 5.} Suppose that $u_{2n-4}^{-1}(m)\in\{\overline n,n\}$ and either that $u_{2n-4}^{-2}(m)=\overline m$ or that the numbers $\overline m,u_{2n-4}^{-2}(m),m$ are distinct and appear in this (cyclic) order when we read the numbers on the circle in clockwise order. Let $r_1,r_2,\ldots,r_q$ be the numbers that appear between $\overline m$ and $u_{2n-4}^{-2}(m)$ (including $\overline m$ and $u_{2n-4}^{-2}(m)$) in the clockwise order on the circle. In particular, $r_1=\overline m$ and $r_q=u_{2n-4}^{-2}(m)$. In this case, one can use \Cref{lem:DMainLemma} to show that $u_{2n-4}$ is the single cycle \[(r_1\,\,r_2\cdots r_q\,\, u_{2n-4}^{-1}(m)\,\,\overline {r_1}\,\,\overline {r_2}\cdots \overline{r_q}\,\,\overline{u_{2n-4}^{-1}(m)})\] (meaning $u_{2n-4}$ fixes every element of $\pm[n]$ that is not in this cycle). However, this means that $u_{2n-4}$ has a single balanced cycle, contradicting the fact that every element of $D_n$ has an even number of balanced cycles. Therefore, this case cannot even occur.  
\end{proof}

\begin{figure}[htbp]
\begin{center} 
\includegraphics[height=4cm]{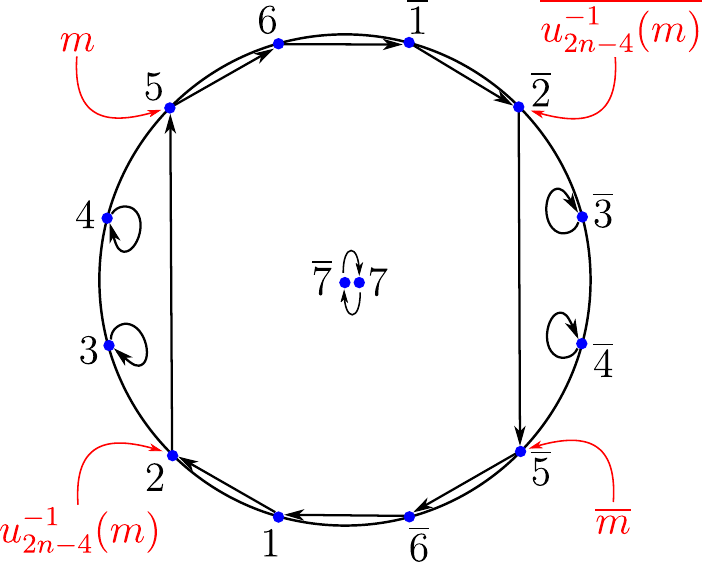}\qquad\qquad\includegraphics[height=4cm]{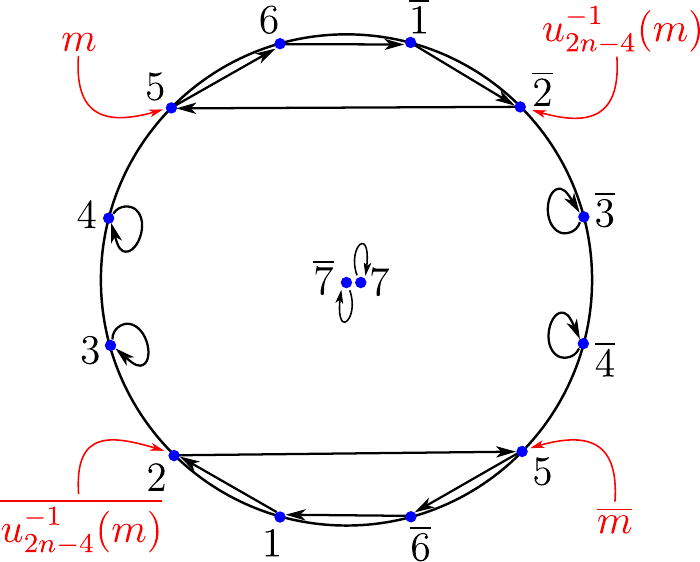}\end{center}
\vspace{1cm}
\begin{center}
\includegraphics[height=4cm]{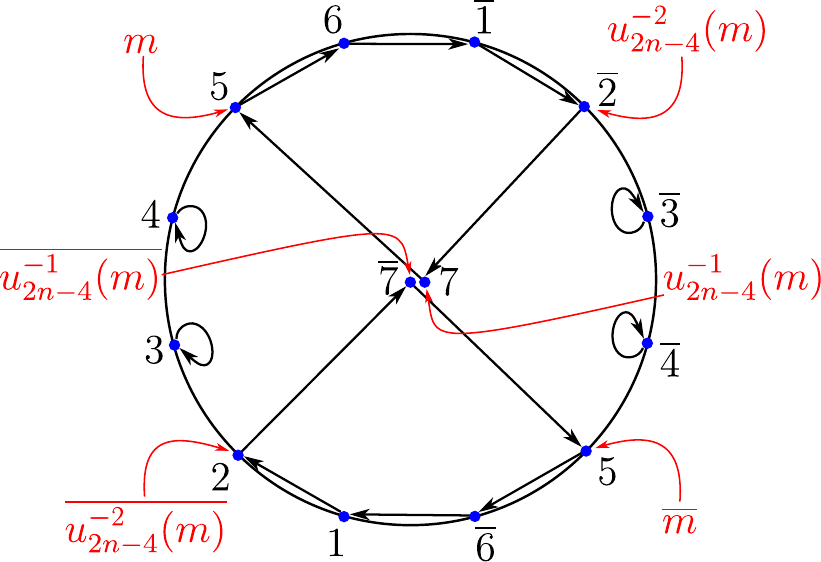}\end{center}
\caption{Diagrams illustrating Case~2 (upper left), Case~3 (upper right), and Case~4 (bottom middle) from the proof of \Cref{thm:main2}. In each example, we have $n=7$ and $m=5$.}\label{FigTsack5}
\end{figure}


\begin{conjecture}
\label{conj:D}
The number of elements of $D_n$ that require exactly $2n-3$ iterations of $\pop_T$ to reach the identity is $n (2^{n-1}-2)+1$.
\end{conjecture}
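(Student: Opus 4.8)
The plan is to sharpen the case analysis in the proof of \Cref{thm:main2}. Since $\pop_T^{2n-3}(w)=e$ for every $w\in D_n$ and $\pop_T^{-1}(e)=\NC(D_n,c)$, an element $w$ requires exactly $2n-3$ iterations of $\pop_T$ precisely when $u_{2n-4}:=\pop_T^{2n-4}(w)$ is a nonidentity noncrossing partition. The proof of \Cref{thm:main2} already confines $u_{2n-4}$ to a short list: the identity, the Coxeter element $c=g(\rho_0)$ (where $\rho_0$ is the type-$D_n$ noncrossing set partition consisting of the single zero block $\pm[n]$), or one of the ``rotation'' noncrossing partitions produced in Cases~2--4 (a conjugate pair of unbalanced arc-cycles, possibly absorbing the central pair $\{\overline n,n\}$, or a single balanced arc-cycle paired with $(\overline n\ n)$). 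So the first step is to promote that analysis to an \emph{exact} description of $u_{2n-4}$ as a function of $m=w(n)$ together with the terminal data $p=u_{2n-4}^{-1}(m)$ (and $q=u_{2n-4}^{-2}(m)$ when $p\in\{\overline n,n\}$), and in particular to isolate exactly when $u_{2n-4}=e$.

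Writing $N$ for the desired count, this gives $N=\big|\{w:u_{2n-4}=c\}\big|+\big|\{w:u_{2n-4}\notin\{e,c\}\}\big|$, a disjoint sum. The first term equals $1$: by \Cref{prop:preimages} together with the isolation in \Cref{thm:cinverseorbit}, the only $w$ with $\pop_T^{2n-4}(w)=c^{-(2n-3)}=c$ is $c^{-1}$, which accounts for the ``$+1$'' in the conjectured value. For the second term I would run $\pop_T$ backwards and count level by level. The engine behind \Cref{lem:DMainLemma} is that for each $j\in\pm[n-1]$ with $u_0^{-1}(j)\notin\{\overline n,n\}$ the sequence $k\mapsto u_k^{-1}(j)$ advances strictly in the cyclic order $\prec_j$ (by one position within the cycle of $v_k$ through $j$) and passes through the center $\{\overline n,n\}$ at most once; thus a valid history of length $2n-4$ terminating at a prescribed rotation $x$ amounts to choosing, at each step and each active $j$, how far $u_k^{-1}(j)$ advances, constrained so the cumulative advance is exactly right. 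I expect these choices to be encoded by binary strings (``fast'' versus ``slow'' at each of $n-1$ arc positions), contributing a factor $2^{n-1}$, multiplied by a factor $n$ recording the position of the single active arc, with a correction that discards the two reconstructions per arc position that degenerate to the identity or merge with the orbit of $c^{-1}$; this would yield the conjectured $n(2^{n-1}-2)+1$.

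The crux, and the reason the statement remains a conjecture, is this last step. Unlike the forward estimate of \Cref{lem:DMainLemma}, which uses only the \emph{final} positions $u_{2n-4}^{-1}(j)$, a fiber count needs the full branching structure of $\pop_T^{-1}$ at all $2n-4$ levels: one must control which noncrossing projections $v_k$ can occur at each stage and how the single passage of some $u_k^{-1}(j)$ through $\{\overline n,n\}$ interacts with the parity constraint on balanced cycles (the obstruction that ruled out Case~5). Making this precise will almost certainly require an explicit bijection between each fiber $(\pop_T^{2n-4})^{-1}(x)$ and a family of binary sequences or lattice paths, together with a proof that the fibers over distinct rotations $x$ are disjoint and exhaust $\{w:u_{2n-4}\notin\{e,c\}\}$. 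A sensible warm-up is to verify the predicted fiber sizes for $n=4,5$ against \Cref{fig:data} and read off the bijection from those small cases; the analogous extremal counts conjectured in \Cref{conj:A} and \Cref{conj:B} for types $A$ and $B$ may also suggest the right bookkeeping.
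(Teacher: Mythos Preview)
The paper does not prove this statement: it is listed as a conjecture with no accompanying argument. Your proposal is likewise not a proof, and you say as much yourself. The preliminary steps are sound---the equivalence between ``requires exactly $2n-3$ iterations'' and $u_{2n-4}\neq e$ follows from \Cref{thm:main2} and the fact that $\pop_T$ fixes $e$, and your isolation of the ``$+1$'' contribution from $w=c^{-1}$ via \Cref{prop:preimages} and \Cref{thm:cinverseorbit} is correct. But the term $n(2^{n-1}-2)$ is supported only by a heuristic: you have not constructed the bijection between fibers of $\pop_T^{2n-4}$ over the ``rotation'' noncrossing partitions and binary strings, nor established that the backward branching has the claimed uniform structure. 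The passage-through-center phenomenon and the parity obstruction you flag are exactly the obstacles, and nothing in your outline resolves them.

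So there is nothing to compare: the paper leaves the statement open, and so do you.
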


\section{Exceptional Types}

In this section, we conclude our study of $\pop_T$ on finite irreducible Coxeter groups by considering the exceptional types $E_6$, $E_7$, $E_8$, $F_4$, and $H_4$.  Recall that an orbit $O_{\pop_T}(x)$ is called \defn{periodic} if $\pop_T^i(x)=x$ for some $i>0$, and that for a Weyl group, $c$ is conjugate to $c^k$ for any $k$ relatively prime to $h$~\cite{springer1974regular}.

\begin{theorem}
\label{thm:periodic}
The map $\pop_T$ has periodic orbits of size $h$ for $W$ of type $E_6$, $E_7$, $E_8$, $F_4$, or $H_4$.  Furthermore, for any $k\not\equiv \pm 1 \pmod h$ coprime to $h$ such that $c$ is conjugate to $c^k$ in $W$, the set \[O_k = \{w \in W : c^w = c^k\}\] is a periodic orbit of size $h$, where $c^w = w^{-1}cw$.
\end{theorem}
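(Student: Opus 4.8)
\emph{Plan.} I would prove the stronger second assertion directly; the first then follows because each of the five groups admits an admissible $k$ — for instance $k=5$ works for $E_6$, $E_7$, and $F_4$, $k=7$ for $E_8$, and $k=11$ for $H_4$ (in each case the list of exponents is stable modulo $h$ under multiplication by $k$, which for the Weyl groups $E_6,E_7,E_8,F_4$ gives $c\sim c^k$ by \cite{springer1974regular}, and for $H_4$ one checks $c\sim c^{11}$ by hand). To analyze $O_k$, note first that it is nonempty by hypothesis and that $O_k=Z_W(c)\,w_0$ for any $w_0\in O_k$, where $Z_W(c)$ is the centralizer of $c$; since $Z_W(c)=\langle c\rangle$ (Carter's result, as already used for \Cref{Lem:joinofcorbit}), we get $\lvert O_k\rvert=h$ and $O_k=\{c^{-j}w_0:0\le j\le h-1\}$. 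From $c^{w_0}=c^k$ one deduces $c^j w_0=w_0 c^{jk}$ for all $j$, and hence $w_0 c^{-1}=c^{-k'}w_0$, where $k'$ is the inverse of $k$ modulo $h$.

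The heart of the argument is the claim that $\pi_T(w)=c$ for every $w\in O_k$, and I would first check that this is precisely what is needed. For $w\in O_k$ (so $w\neq e$, as $k\not\equiv 1$) one computes $(\pop_T(w))^{-1}\,c\,\pop_T(w)=\pi_T(w)\,c^{k}\,\pi_T(w)^{-1}$; hence $\pop_T(w)\in O_k$ if and only if $\pi_T(w)$ commutes with $c$, i.e. $\pi_T(w)\in\langle c\rangle\cap\NC(W,c)=\{e,c\}$ (this last intersection being computed in the proof of \Cref{Lem:joinofcorbit}), and $\pi_T(w)=e$ would force $w=e\notin O_k$. So the claim yields $\pop_T(w)=wc^{-1}$ on $O_k$, and therefore $\pop_T(c^{-j}w_0)=c^{-(j+k')}w_0$; in the coordinate $j$ this is the permutation $j\mapsto j+k'\pmod h$, which is a single $h$-cycle since $\gcd(k',h)=\gcd(k,h)=1$. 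Thus $O_k$ is one forward orbit of size $h$ on which $\pop_T$ acts bijectively, so every element of $O_k$ is periodic and $O_k$ is a periodic orbit of size $h$.

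To prove the claim I would use the equivalence: $\pi_T(w)=c$ if and only if $w$ lies in no proper noncrossing parabolic subgroup of $W$. (If $w\in W_x$ with $x<_T c$ then every reflection $\le_T w$ lies in $W_x$, so $\pi_T(w)=\bigvee_{t\le_T w}t\le_T\bigvee_{t\le_T x}t=x<_T c$; conversely, with $x=\pi_T(w)$, every reflection $\le_T w$ lies in $W_x$, so $w\in\langle t:t\le_T w\rangle\subseteq W_x$.) Since a proper noncrossing parabolic has nonzero fixed space and $w\in W_x$ is equivalent to $\mov(w)\subseteq\mov(x)$ (using $\mov(w)=\spn\{\alpha_t:t\le_T w\}$ and \eqref{EqBradyWatt}), it suffices that $\mov(w)$ lie inside no proper noncrossing parabolic's moved space; in particular it is enough to show $w$ is \emph{elliptic}, i.e. $\mov(w)=V$, for then every reflection is $\le_T w$ and $\pi_T(w)=\bigvee_{t\in T}t=c$. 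For ellipticity I would complexify the geometric representation: as the exponents of each of the five groups are pairwise distinct, $c$ has $n$ distinct eigenvalues $\zeta^{m_i}$ (with $\zeta=e^{2\pi i/h}$) and one-dimensional eigenlines $E_m$, and $cw=wc^{k}$ forces $w$ to permute the $E_m$ by $m\mapsto km\pmod h$. A nonzero fixed vector of $w$ exists exactly when some cycle $C$ of this permutation has trivial holonomy (the scalar by which $w^{|C|}$ acts on $\bigoplus_{m\in C}E_m$), and since $w^{d}\in Z_W(c)=\langle c\rangle$ (where $d$ is the multiplicative order of $k$ modulo $h$), writing $w^{d}=c^{a}$ pins these holonomies down in terms of $a$ (the cycle containing the exponent $1$ gets holonomy $\zeta^{a}$). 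I expect the crux — deducing from $k\not\equiv\pm 1\pmod h$ that every holonomy is nontrivial, hence that $w$ is elliptic — to be where a genuine fight lies, and where, failing a uniform argument, one resorts to the explicit exponents and the order of $k$ modulo $h$ in each of $E_6,E_7,E_8,F_4,H_4$. That $k\not\equiv -1$ is genuinely needed here: $c_+$, the product of one part of a bipartite Coxeter element, satisfies $c^{c_+}=c^{-1}$, so $c_+\in O_{-1}$, yet $c_+$ is itself a noncrossing partition, so $\pi_T(c_+)=c_+\neq c$.
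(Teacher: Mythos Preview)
Your approach is genuinely different from the paper's: the paper proves \Cref{thm:periodic} entirely by computer verification (Sage/GAP3/CHEVIE) of the orbit structure in each exceptional type, whereas you aim for a structural argument. Your reduction is correct and attractive: $O_k$ is indeed a single coset of $Z_W(c)=\langle c\rangle$ of size $h$, and the equivalence ``$\pop_T(w)\in O_k \Leftrightarrow \pi_T(w)\in\{e,c\} \Leftrightarrow \pi_T(w)=c$ (since $w\neq e$)'' is exactly right, as is the conclusion that $\pop_T$ then acts on $O_k$ as the $h$-cycle $j\mapsto j+k'$. So everything hinges on your Claim that $\pi_T(w)=c$ for every $w\in O_k$.

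The genuine gap is in your proposed route to the Claim via ellipticity. You explicitly leave the ``crux'' open, and in fact the ellipticity hypothesis is \emph{false} in at least one case you need. Take $W=E_7$, $h=18$, $k=5$ (your own chosen $k$). The exponent $9$ is fixed by $m\mapsto 5m\pmod{18}$, and the corresponding eigenspace $E_9$ is the real line on which $c$ acts by $-1$. Every $w\in O_5$ stabilizes $E_9$ and, being orthogonal, acts there by $\pm 1$; writing $O_5=\{c^jw_0:0\le j<18\}$ one has $w|_{E_9}=(-1)^j\cdot w_0|_{E_9}$, so exactly nine of the eighteen elements of $O_5$ fix $E_9$ pointwise and are therefore \emph{not} elliptic. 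Yet the paper's computation shows $O_5$ \emph{is} a periodic orbit, so $\pi_T(w)=c$ holds for these non-elliptic $w$ as well. (That ellipticity is strictly stronger than $\pi_T(w)=c$ is already visible in type $A$: $(1\,3)(2\,4)\in\mathfrak S_4$ is SIF but has two cycles.) Thus your plan, as stated, cannot be completed: even with a case-by-case check of exponents and the order of $k$ modulo $h$, the holonomy on a length-$1$ cycle (or more generally the value of $a$ in $w^{d}=c^{a}$) is not determined by that data alone, and in $E_7$ it is provably $1$ for some $w\in O_k$. A proof along your lines would need a direct argument that $\mov(w)$ is contained in no coatom moved-space of $\NC(W,c)$, which is weaker than ellipticity and for which you have not proposed a mechanism; absent that, the paper's computational verification remains the only complete proof.
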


\begin{proof}
We computed the entire orbit structure of $\pop_T$ on groups of types $F_4$, $H_4$, and $E_6$ using a combination of Sage~\cite{sagemath} and GAP3 with CHEVIE~\cite{Sch97,GH96}.  This data is summarized in~\Cref{fig:data}.  We confirmed by computer that the two periodic orbits of $F_4$ (each of size $h=12$) lift via~\Cref{sec:folding} to the two periodic orbits of $E_6$ (each again of size $h=12$).  In $F_4$ and $E_6$, the two periodic orbits are $O_5$ and $O_7$---and there are no others.
  

The number of elements requiring $i$ iterations of $\pop_T$ to reach the identity in $H_4$ is not included in~\Cref{fig:data} for reasons of space; but for $i=0,1,\ldots,34$ these numbers are 1, 279, 467, 465, 663, 690, 675, 660, 750, 660, 495, 390, 390, 390, 390, 390, 390, 390, 390, 390, 375, 375, 375, 375, 375, 300, 300, 300, 225, 75, 66, 66, 33, 30, 15, with an additional $1800$ elements in 60 periodic orbits, each of size $h=30$.  Both $O_{11}$ and $O_{19}$ are periodic orbits in $H_4$ (and $c$ is not conjugate to $c^k$ for $k=7,13,17,23$).

In $E_7$ with $h=30$, we checked that $O_k$ for $k=5,7,11,13$ all form periodic orbits, but we did not compute the full orbit structure of $\pop_T$.

To show that $E_8$ also has periodic orbits, we can use the unfolding of $H_4$ to $E_8$, which necessarily takes periodic orbits to periodic orbits by \Cref{thm:pop_embedding}.  Write $t_{1^{a_1}\cdots 8^{a_8}}$ for the reflection orthogonal to the root $a_1\alpha_1+\cdots+a_8\alpha_8$. For example, $t_{123^24^25^2678}$ is the reflection orthogonal to $\alpha_{1} + \alpha_{2} + 2\alpha_{3} + 2\alpha_{4} + 2\alpha_{5} + \alpha_{6} + \alpha_{7} + \alpha_{8}$. The $E_8$ element \[t_2 \cdot t_{123456} \cdot t_3 \cdot t_{1234^25678} \cdot t_{123^24^25^2678} \cdot t_{134567}\] lies in a periodic orbit of size $h=30$ and is the unfolding of the $H_4$ element with reduced word (in simple reflections) \[s_3s_4s_3s_2s_1s_2s_1s_3s_2s_1s_4s_3s_2s_1s_2s_3s_4.\]
With computational assistance from C.~Stump, we confirmed that the sets $O_k$ for $k \in \{7,11,13,17,19,23\}$ all form periodic orbits under $\pop_T$.\end{proof}  

It would be interesting to fully characterize which elements of $W$ give rise to periodic orbits.





\section{Normal Forms}

\subsection{Dual braid lifts}
\label{sec:lifts}
For this subsection, fix $W$ to be a finite irreducible Coxeter group with no periodic orbits under $\pop_T$ other than that of the identity element.  We would like to provide a canonical lift of $W$ to Bessis' dual braid monoid, just as $W$ has a standard lift (using reduced words in simple reflections) to its positive braid monoid.

Recall that we introduced $\pop_T$ as the dual version of $\pop_S$.  As $\pop_S$ is closely related to Brieskorn normal form, we consider the analogous factorization coming from $\pop_T$---this succeeds because each image under $\pi_T$ is a noncrossing partition and hence has a canonical lift to the dual braid monoid.

\begin{theorem}
\label{thm:normal_form}
Let $W$ be a finite irreducible Coxeter group whose only periodic orbit under $\pop_T$ is that of the identity.  For $w\in W$, write $w_1=w$ and $w_i=\pop_T(w_{i-1},c)$ for $i\geq 2$. Let $k=\lvert O_{\pop_T}(w)\rvert-1$ so that (assuming $w\neq e$) $w_k\neq w_{k+1}=e$.  Then the factorization \[w=\pi_T(w_k,c)\pi_T(w_{k-1},c)\cdots\pi_T(w_1,c)\] gives a lift from $W$ to the dual braid monoid of $W$.
\end{theorem}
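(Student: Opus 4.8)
The plan is to reduce the statement to the telescoping identity that $\pop_T$ automatically produces, together with the fact that noncrossing partitions are exactly the simple elements of Bessis' dual Garside structure, so that each factor $\pi_T(w_i)$ has a canonical lift that can simply be multiplied together.

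First I would check that the sequence $w_1,w_2,\dots$ is well behaved. Since $W$ is finite, the forward orbit $O_{\pop_T}(w)$ is finite and the sequence is eventually periodic; by hypothesis the only periodic orbit of $\pop_T$ is $\{e\}$, so $w_j=e$ for all sufficiently large $j$. If $w\neq e$, then no non-identity element can recur, because a recurrence $w_i=w_j$ with $i<j$ and $w_i,\dots,w_{j-1}$ all non-identity would be a periodic orbit other than $\{e\}$; hence the distinct elements of $O_{\pop_T}(w)$ are exactly $w_1,\dots,w_k,w_{k+1}=e$, the elements $w_1,\dots,w_k$ are distinct and non-identity, and $k=\lvert O_{\pop_T}(w)\rvert-1$. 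This is the only place the hypothesis is used; pinning down which irreducible $W$ it applies to is precisely the content of \Cref{thm:main1,thm:main2} (together with the computer check for $H_3$), while \Cref{thm:periodic} exhibits periodic orbits other than $\{e\}$ for $E_6,E_7,E_8,F_4,H_4$.

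Next I would record the factorization inside $W$. From \Cref{def:dual_pop_stack} we have $w_{i+1}=w_i\,\pi_T(w_i)^{-1}$, i.e.\ $w_i=w_{i+1}\,\pi_T(w_i)$, so telescoping from $i=1$ to $i=k$ and using $w_{k+1}=e$ gives
\[
w=w_1=w_2\,\pi_T(w_1)=\cdots=w_{k+1}\,\pi_T(w_k)\cdots\pi_T(w_1)=\pi_T(w_k)\,\pi_T(w_{k-1})\cdots\pi_T(w_1).
\]
Then I would pass to the dual braid monoid. Each $\pi_T(w_i)$ lies in $\NC(W,c)=[e,c]$ by the definition of $\pi_T$. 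In Bessis' dual Garside structure \cite{bessis2003dual}, the elements of $[e,c]$ are precisely the simple elements, each with a canonical image $\overline{x}$ in the dual braid monoid $\mathsf{M}$ of $W$ (equivalently, $\mathsf{M}$ is the interval monoid of the partial product on $[e,c]$), and there is a canonical surjection $\mathsf{M}\to W$ sending $\overline{x}\mapsto x$ for each $x\in[e,c]$. Hence $\overline{\pi_T(w_k)}\cdots\overline{\pi_T(w_1)}\in\mathsf{M}$ is a well-defined element, and by the displayed identity it maps to $w$ under $\mathsf{M}\to W$ (if $w=e$ it is the empty product $1$). Thus $w\mapsto\overline{\pi_T(w_k)}\cdots\overline{\pi_T(w_1)}$ is a set-theoretic section of $\mathsf{M}\to W$, that is, a lift of $W$ to the dual braid monoid; note that when $w$ is itself a noncrossing partition one has $k=1$ and the lift is just the canonical simple lift $\overline{w}$, so the construction extends the obvious lift of $\NC(W,c)$.

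The single genuinely non-formal ingredient is the first step: termination of the $\pop_T$-orbit is exactly the hypothesis of the theorem, and the real work lies in the earlier verification of when that hypothesis holds; everything afterward is bookkeeping about the Garside structure. I would close by remarking, via the equivariance of \Cref{prop:pop_props}, that the lifts attached to different Coxeter elements agree under the canonical identifications of the corresponding dual braid monoids, and by flagging the question—not addressed here—of whether this lift admits a Garside-theoretic description, for instance as a greedy normal form.
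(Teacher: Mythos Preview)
Your proposal is correct and follows exactly the approach the paper takes: the paper does not provide a formal proof of \Cref{thm:normal_form} at all, giving only the one-line justification ``this succeeds because each image under $\pi_T$ is a noncrossing partition and hence has a canonical lift to the dual braid monoid,'' and you have simply spelled out the telescoping identity and the termination argument that make this precise. If anything, your write-up is more careful than the paper's, which leaves the telescoping $w_i=w_{i+1}\pi_T(w_i)$ and the role of the periodic-orbit hypothesis implicit.
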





\begin{example}
In type $A_5$ with $c=(123456)$ and $w=w_1=(135642)$, we have
\begin{align*}
\pi_T(w_1)&= (123456) &\text{ and } &&w_2=\pop_T(w_1)&=(12634),\\
\pi_T(w_2)&= (12346) &\text{ and } &&w_3=\pop_T(w_2)&=(246),\\
\pi_T(w_3)&= (246) &\text{ and } &&w_4=\pop_T(w_3)&=e,
\end{align*}
so that
\[w=(246) \cdot (12346) \cdot (123456),\]
which may be interpreted as a product of simple elements in the dual braid monoid.
\end{example}

\begin{remark}
T.~Douvropoulos has suggested that one could extend our construction to a general finite complex reflection group $W$ by using noncrossing reflections as a generating set for $W$, with a noncrossing projection of $w \in W$ defined as the join in the noncrossing partition lattice of those noncrossing reflections appearing in a shortest length word for $w$.  Explicit computations show that $G_5$ has periodic orbits not containing the identity under these definitions.
\end{remark}

\subsection{Stabilized-interval-free elements}

A permutation $w\in\mathfrak S_n$ is called \defn{stabilized-interval-free} (SIF) if it does not stabilize a proper subinterval of $[n]$; that is, there do not exist $i,j\in[n]$ with $1\leq i\leq j\leq n$ satisfying $w(\{i,\ldots,j\})=\{i,\ldots,j\}$ and $\{i,\ldots,j\}\neq[n]$. These permutations were initially investigated from an enumerative point of view in \cite{callan2004counting} (see also \cite{blitvic2014stabilized}). They are also important in the theory of the totally nonnegative Grassmannian because they parameterize connected positroids \cite{ArdilaRinconWilliams}. Using the combinatorial description of noncrossing partitions in type~$A$ discussed in \Cref{sec:typeab}, it follows that $w$ is SIF if and only if $\pi_T(w,c)=c$, where $c$ is the Coxeter element $(12\cdots n)$. 

Now consider an arbitrary finite irreducible Coxeter group $W$ with a fixed Coxeter element $c$.  The following construction naturally generalizes the notion of a stabilized-interval-free permutation. 


\begin{definition}
\label{def:sif}
An element $w\in W$ is \defn{stabilized-interval-free} (SIF) if $\pi_T(w,c)=c$.
\end{definition}


For $w\in W$, let $P$ be the smallest parabolic subgroup containing $w$. We call the irreducible components $P_1,\ldots,P_k$ of $P$ the \defn{blocks} of $w$. Note that $P=P_1 \times P_2 \times \cdots \times P_k$.  The following is immediate. 

\begin{proposition}
Every $w \in W$ can be written uniquely as a product of SIF elements in the blocks of $\pi_T(w,c)$.
\end{proposition}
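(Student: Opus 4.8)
The plan is to show that the decomposition of $w$ according to the blocks of its noncrossing projection $v=\pi_T(w,c)$ is forced, and that each factor is automatically SIF. First I would observe that since every reflection $t\leq_T w$ satisfies $t\leq_T v$, the element $w$ lies in the noncrossing parabolic subgroup $W_v$, which (as recalled in \Cref{sec:general_properties}) decomposes as the internal direct product $W_v = W_{v_1}\times\cdots\times W_{v_m}$ over the irreducible components corresponding to the cycles (blocks) of $v$. Thus $w$ factors uniquely as $w=w^{(1)}\cdots w^{(m)}$ with $w^{(i)}\in W_{v_i}$, and these factors commute. It remains to identify $W_{v_i}$ with a parabolic subgroup of $W$ carried by a block of $\pi_T(w,c)$ and to check each $w^{(i)}$ is SIF relative to the Coxeter element $v_i$ of $W_{v_i}$.

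Next I would verify the SIF property of each factor. Because $v=\pi_T(w,c)=\bigvee_{t\leq_T w}t$ is the join in $\NC(W,c)$ of the reflections below $w$, and join commutes with the direct-product decomposition of $W_v$ (meets and joins in $\NC(W,c)$ restrict to the corresponding operations in each $\NC(W_{v_i},v_i)$, by the sublattice considerations already used in \Cref{lem:lattice_embedding}), the reflections below $w$ split up: those lying in $W_{v_i}$ are exactly the reflections below $w^{(i)}$, and their join inside $\NC(W_{v_i},v_i)$ must be all of $v_i$ — for if it were some proper $v_i'<_T v_i$, then $\bigvee_{t\leq_T w}t$ would be strictly below $v$, contradicting $v=\pi_T(w,c)$. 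Hence $\pi_T(w^{(i)},v_i)=v_i$, i.e. $w^{(i)}$ is SIF as an element of $W_{v_i}$ with its Coxeter element $v_i$. This gives existence.

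For uniqueness I would argue that any expression $w=u_1\cdots u_m$ with $u_i$ ranging over the blocks $P_i$ of $v$ (and hence commuting) must have each $u_i$ equal to the projection of $w$ onto the $i$-th factor of $W_v = P_1\times\cdots\times P_m$, since direct-product factorizations are unique; so the decomposition coincides with the one constructed above. One should also note that the blocks of $w$ (the irreducible components of the smallest parabolic containing $w$) need not literally equal the blocks of $v$ — the statement says ``the blocks of $\pi_T(w,c)$'' — so strictly I would phrase the argument in terms of the direct factors of $W_v$, which are precisely indexed by the cycles/components of $v$.

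The main obstacle I anticipate is purely bookkeeping: making precise the claim that joins in $\NC(W,c)$ respect the direct-product structure of a noncrossing parabolic $W_v$, i.e. that $\NC(W_v,v)\cong \prod_i\NC(W_{v_i},v_i)$ as lattices with the induced order. This is standard (it follows from the lattice isomorphism $\NC(W,c)\to\{\text{noncrossing parabolics}\}$ together with the fact that a direct product of intervals is the interval of the product), but it needs to be invoked cleanly; once it is in hand, the rest is the elementary uniqueness of direct-product factorizations plus the observation that $\pi_T$ is characterized by the reflections below $w$. Because the proposition is stated as ``immediate,'' I would keep the writeup to a few lines, citing the product structure of noncrossing parabolics and the definition of $\pi_T$.
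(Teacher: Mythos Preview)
Your proposal is correct and is exactly the natural argument one would write down; the paper itself gives no proof at all, declaring the proposition ``immediate'' after setting up the definition of blocks, so your sketch simply fills in what the authors left implicit. The one cosmetic point is that you need not worry about the distinction between the blocks of $w$ and those of $\pi_T(w,c)$: the statement is explicitly about the latter, and your argument already works entirely inside the direct-product decomposition of $W_v$.
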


\begin{figure}[htbp]
\begin{center}
\begin{tabular}{c|ccccc|c}
rank $n$ & 2 & 3 & 4 & 5 & 6 & OEIS \\ \hline
$A_n$ & 2 & 7 & 34 & 206 & 1476 & \href{https://oeis.org/A075834}{A075834}\\
$B_n$ & 3 & 20 & 179 & 1944 & 24674 &\\
$D_n$ &     & 7 & 74 & 891 &12004 &\\
$E_6$ & & & & & 33610 &\\
$F_4$ & & &  762 & & &\\
$G_2$ & 5 & & & & &\\
$H_3$& & 69 & & & &\\
$H_4$& & & 12802 & & &\\
$I_2(m)$ & $m-1$ & & & & &\\
\end{tabular}
\end{center}
\caption{Numbers of SIF elements in finite irreducible Coxeter groups of small rank.}
\end{figure}


\section*{Acknowledgements}
N.W. was partially supported by a Simons Foundation Collaboration Grant. C.D. was supported by a Fannie and John Hertz Foundation Fellowship and an NSF Graduate Research Fellowship (grant number DGE-1656466).  We thank C.~Stump for his valuable computational assistance in~\Cref{thm:periodic} (especially for $E_8$) and T.~Douvropoulos for many insightful comments on an early draft.  N.W. thanks T.~Gobet for a fruitful collaboration and conversation in 2012 which led to the discussion in~\Cref{sec:lifts}.

\bibliographystyle{amsalpha}
\bibliography{reflection_sorting}

\end{document}